\newtheorem{theorem}{Theorem}[section]
\newtheorem{lemma}[theorem]{Lemma}
\newtheorem{corollary}[theorem]{Corollary}
\newtheorem{proposition}[theorem]{Proposition}
\theoremstyle{definition}
\newtheorem{definition}[theorem]{Definition}
\newtheorem{remark}[theorem]{Remark}
\newtheorem{example}[theorem]{Example}
\newcommand{\xysquare}[8]{
\[\xymatrix{
#1 \ar@{#5}[r] \ar@{#6}[d] & #2 \ar@{#7}[d]\\
#3 \ar@{#8}[r] & #4
}\]
}
\newcommand{\al}{\alpha}
\newcommand{\bb}{\mathbb}
\newcommand{\bor}{\partial}
\newcommand{\Cl}[1]{\overline{\{#1\}}}
\newcommand{\comment}[1]{}
\newcommand{\into}{\hookrightarrow}
\newcommand{\isoto}{\stackrel{\simeq}{\to}}
\newcommand{\mult}[1]{#1^{\!\times}}
\newcommand{\op}{\operatorname}
\newcommand{\pid}[1]{\langle #1 \rangle}
\newcommand{\roi}{\mathcal{O}}
\newcommand{\sub}[1]{{\mbox{\scriptsize #1}}}
\newcommand{\To}{\longrightarrow}
\newcommand{\ul}[1]{\underline{#1}}
\newcommand{\xto}{\xrightarrow}
\renewcommand{\cal}{\mathcal}
\renewcommand{\hat}{\widehat}
\renewcommand{\frak}{\mathfrak}
\newcommand{\indlim}{\varinjlim}
\renewcommand{\tilde}{\widetilde}
\renewcommand{\Im}{\operatorname{Im}}
\renewcommand{\ker}{\operatorname{Ker}}
\renewcommand{\projlim}{\varprojlim}
\DeclareMathOperator{\Frac}{Frac}
\DeclareMathOperator{\Hom}{Hom}
\DeclareMathOperator*{\rprod}{\prod\nolimits^{\prime}\hspace{-1mm}}
\DeclareMathOperator{\Spec}{Spec}
\DeclareMathOperator*{\projlimf}{``\varprojlim''}
\DeclareMathOperator*{\holim}{\operatorname*{holim}}
\newcommand{\comp}{{\hat{\phantom{o}}}}
\begin{document}
\title{A singular analogue of Gersten's conjecture and applications to $K$-theoretic ad\`eles}

\author{\sc Matthew Morrow}

\date{}



\maketitle
\begin{abstract}
The first part of this paper introduces an analogue, for one-dimensional, singular, complete local rings, of Gersten's injectivity conjecture for discrete valuation rings. Our main theorem is the verification of this conjecture when the ring is reduced and contains $\bb Q$, using methods from cyclic/Hochschild homology and Artin-Rees type results due to A.~Krishna.

The second part of the paper describes the relationship between ad\`ele type resolutions of $K$-theory on a one-dimensional scheme and more classical questions in $K$-theory such as localisation and descent. In particular, we construct a new resolution of sheafified $K$-theory, conditionally upon the conjecture.
\end{abstract}

\section{Introduction}
Suppose that $A$ is a one-dimensional local ring; letting $\frak m$ denote its maximal ideal, consider the `completed $K$-group' \[\hat K_n(A):=\projlim_r K_n(A/\frak m^r).\] These groups appeared first perhaps in work by J.~Wagoner \cite{Wagoner1975, Wagoner1976, Wagoner1976a} for complete discrete valuation rings, where they were defined in a different, but equivalent, fashion. The following conjecture is explored in the first part of this paper:
\begin{quote}
If $A$ is a one-dimensional, complete, Noetherian local ring, then the diagonal map \[K_n(A)\to \hat K_n(A)\oplus K_n(\Frac A)\] is injective, where $\Frac A$ denotes (assuming $A$ is Cohen-Macaualy) the total quotient ring of $A$.
\end{quote}
In other words, the $K$-theory of the disk $\Spec A$ is determined on the punctured disk together with all infinitesimal thickenings of the closed point. Our main theorem is that the conjecture is true if $A$ is reduced and contains $\bb Q$ (see footnote\footnote{At the time of publishing, the author has improved the result by showing that if $A$ is reduced and contains $\bb Q$ then $K_n(A)\into K_n(A/\frak m^r)\oplus K_n(\Frac A)$ for $r\gg 0$.}).

The conjecture is not surprising if $A$ is regular, for then the Gersten conjecture (a theorem in many cases) predicts already that $K_n(A)\to K_n(\Frac A)$ is injective; of course, it would still be interesting to have a proof of the conjecture for those discrete valuation rings for which the Gersten conjecture remains unknown, but that it is not our goal. Rather we are claiming that when $A$ is singular, the failure of $K_n(A)\to K_n(\Frac A)$ to be injective is captured entirely by the $K$-theory of all the quotients $A/\frak m^r$, $r\ge 1$, as long as $A$ is complete (or Henselian, as we shall see).

An informative example is provided by taking $A$ to be the completion of the local ring of a seminormal, rational singularity on a curve over a field, and $n=2$. Classical calculations due to R.~Dennis and M.~Krusemeyer \cite{Dennis1979}, C.~Weibel \cite{Weibel1980}, and S.~Geller \cite{Geller1986} show that the kernel of $K_2(A)\to K_2(\Frac A)$ is non-zero and that it embeds into $K_2(A/\frak m^2)$, verifying the conjecture in this case. See proposition \ref{proposition_seminormal}.

Before turning to the second part of the paper, on global theory, we describe more precisely the layout of the first part. Section \ref{section_defs} contains the main definitions, various remarks on the conjecture, and some theoretical tools for passing between complete rings and Henselian ones. It also summarises the main results and provides a counterexample showing that completeness/Henselianess is essential in the conjecture.

Section \ref{section_main_proofs} is the proof of the main theorem, namely verification of the conjecture when $A$ is reduced and contains $\bb Q$ (or is truncated polynomials over such a ring). Results and ideas from two papers by A.~Krishna \cite{Krishna2005, Krishna2010}, concerning Artin-Rees type properties in Hochschild and cyclic homology, are absolutely essential.  We use the standard comparisons between the $K$-theory and cyclic homology of $\bb Q$-algebras, namely T.~Goodwillie's \cite{Goodwillie1986} result on nilpotent extensions and G.~Corti\~nas' proof \cite{Cortinas2006} of the KABI conjecture, and we compare $K$-groups/cyclic homology groups of $A$ with those of its normalisation $\tilde A$ (which is smooth, so its cyclic homology is well understood). This only works because Krishna's results imply that $K$-theory and cyclic homology satisfy excision when we take the limit over $\frak m^r$, $r\ge 1$.

Section \ref{section_example} contains miscellaneous structural results and examples concerning $\hat K_n(A)$. When $A$ has finite residue field, we show that $\hat K_n(A)$ is a profinite group and we offer two alternative homotopy theoretic descriptions of it (the second is only allowed in the mixed characteristic case): $\hat K_n(A)\cong \pi_n(\holim_rK(A/\frak m^r))\cong \pi_n(K(A)^\comp)$. We also apply Moore's theorem to completely describe $\hat K_2(\roi)$ when $\roi$ is the ring of integers in a finite extension of $\bb Q_p$. On the other hand, we show that if $A$ is a discrete valuation ring of residue characteristic zero, then $\hat K_n(A)$ differs from $K_n$ of its residue field by a `pile of differential forms'.

The second part of the paper focuses on global constructions, reformulating aspects of $K$-theory of one-dimensional schemes from an adelic point of view. As motivation, we briefly now review the usual id\`eles\footnote{There is a vague convention to use the word `ad\`eles' for additive type objects, and `id\`eles' for multiplicative type ones; but this is so badly defined that we will prefer to speak of ad\`eles for anything other than the original group of id\`eles.} from a geometric perspective: given a one-dimensional (Noetherian, and temporarily regular for simplicity) scheme $X$, let \[\rprod_{x\in X_0} \mult F_x\] be the restricted product of the unit groups of the fractions of $\roi_x:=\hat\roi_{X,x}$, for $x\in X_0$; this is the familiar ring of (finite) id\`eles if $X$ is the spectrum of the ring of integers of a number field. It is not hard to check that the cohomology of the complex (which is the reduced complex attached to a simplicial group) \[0\To \prod_x\mult \roi_x\oplus \mult{K(X)}\To \rprod_x \mult F_x\To 0\tag{\dag}\] is precisely $H^*(X,\bb G_m)$. In the second part of the paper we extend this result to higher degree $K$-theory, in such a way that the local factors of the adelic complex are the completed $K$-groups studied in the first part; the main result is theorem \ref{theorem_main}. However, the journey is as important as the final result, as we will explain in the following summary.

Section \ref{section_Zariski} starts by describing the general theory of (incomplete) ad\`eles on a curve $X$ for an arbitrary sheaf $\cal F$ on abelian groups on $X_\sub{Zar}$; this is by no means new, but I do not know of any reference. In particular, we carefully define our `restricted product' notation $\rprod$\,.

Sections \ref{subsection_Zariski_les} and \ref{subsection_Zariski_descent} were inspired by calculations in C.~Weibel's paper \cite{Weibel1986}, in which truncations of ad\`eles for the sheaf $\cal K_n$ ($=$ Zariski sheafification of $K_n$) already appear. We derive a long exact Mayer-Vietoris sequence \[\cdots\to K_n(X)\to\prod_{x\in X^1}K_n(\roi_{X,x})\oplus\prod_{y\in X^0}K_n(\roi_{X,y})\to\rprod_{x\in X^1}K_n(\Frac\roi_{X,x})\to\cdots\tag{\ddag}\] relating the (incomplete) $K$-theoretic ad\`eles with the $K$-theory of $X$ itself. This arises from taking an increasingly fine limit over $K$-theory localisation sequences (in the style of R.~Thomason and T.~Trobaugh \cite{Thomason1990}). We show that the existence of such a long exact sequence is essentially equivalent to $K$-theory satisfying descent on $X_\sub{Zar}$.

These arguments are then repeated, almost verbatim, in section \ref{section_Nisnevich_case} for the Nisnevich topology on $X$. In particular, we obtain another long exact Mayer-Vietoris sequence
\[\cdots\to K_n(X)\to\prod_{x\in X^1}K_n(\roi_{X,x}^h)\oplus\prod_{y\in X^0}K_n(\roi_{X,y})\to\rprod_{x\in X^1}K_n(\Frac\roi_{X,x}^h)\to\cdots,\]
where this time the local factors are $K$-groups of Henselizations; as in the Zariski case, its existence is equivalent to $K$-theory satisfying descent on $X_\sub{Nis}$. In section \ref{subsection_Nisnevich_via_completions} it is explained how, in our local-to-global formulae describing the Nisnevich cohomology of $K$-theory, one can always work with completions of the local rings, $\hat \roi_{X,x}$, rather than their Henselisations (c.f.~corollary \ref{corollary_equivalence_of_completed_and_Henselian}).

In section \ref{subsection_Nisnevich_cohomology_via_adeles} we reach our goal, namely theorem \ref{theorem_main}, which is conditional on the aforementioned conjecture being satisfied for $\hat\roi_{X,x}$ for all $x\in X^1$: Firstly, there is a long exact Mayer-Vietoris sequence, similar to (\ddag), where the local factors are completed $K$-groups. Secondly, the cohomology of a complex like (\dag), but again with completed $K$-groups instead, computes $H^*_\sub{Nis}(X,\cal K_n)$.

We finish this introduction by commenting that the longer term goal of this work is to develop a theory of $K$-theoretic ad\`eles in arbitrary dimensions in the spirit of A.~Parshin and A.~Beilinson's theory of higher ad\`eles \cite{Beilinson1980, Parshin1976}. This would (conjecturally) offer a more functorial alternative to the Gersten resolution of $K$-theory; moreover, it would continue to work on singular schemes. The recursive fashion by which ad\`eles are constructed in higher dimensions forces one to consider singular, non-reduced schemes in dimension one; moreover, it is expected that functoriality in higher dimensions will use K.~Kato's residue homomorphisms on completed $K$-groups \cite{Kato1980, Kato1983}, locally representing the pushforward of cycles along proper morphisms. Therefore a preliminary study of the completed $K$-groups of one-dimensional, singular local rings was necessary.

The appendix contains summaries of some facts from $K$-theory and Hochschild/cyclic homology, and collects together some classical results on the $K$-theory of seminormal rings.

\subsection*{Acknowledgements}
I am extremely grateful to C.~Weibel for numerous helpful conversations.

\part*{Part I: Local theory}
\section{An singular analogue of Gersten's conjecture}\label{section_defs}
Let $A$ be a one-dimensional Noetherian local ring, typically singular (all rings will be Noetherian, so we will not mention this hypothesis again); its maximal ideal will be denoted $\frak m=\frak m_A$. Throughout this paper, we will write $\Frac A$ to mean \[\Frac A:=\prod_\frak pA_\frak p,\] where $\frak p$ runs over the minimal prime ideals of $A$; so $\Spec(\Frac A)$ is the punctured spectrum $\Spec A\setminus\frak m$ (we will make some comments about this notation in remark \ref{remark_conjecture_for_non_CM} below).

Of central interest in this work are the {\em completed $K$-groups} of $A$, namely \[\hat K_n(A):=\projlim_r K_n(A/\frak m^r).\] The reader interested in seeing some examples immediately may wish to glance at section \ref{section_example}. The conjecture we will explore is the following:
\begin{quote}
{\bf Conjecture 1:} If $A$ is a one-dimensional, complete local ring, then the diagonal map \[K_n(A)\to \hat K_n(A)\oplus K_n(\Frac A)\] is injective for all $n\ge 0$.
\end{quote}
We have no counterexamples to this, and will prove various special cases.

\begin{remark}
On occasion $A$ will merely be semi-local, in which case $\Frac A$ is defined as above, we use $\frak M=\frak M_A$ for the Jacobson radical, and we put $\hat K_n(A)=\projlim_rK_n(A/\frak M^r)$. But if $A$ is a complete semi-local ring then it is a finite product of complete local rings, so the conjecture trivially extends to the semi-local case.
\end{remark}

\begin{remark}
The conjecture is trivial if $n=0$ or $1$. If $A$ is regular, i.e.~a discrete valuation ring, then Gersten's conjecture predicts that already $K_n(A)\to K_n(\Frac A)$ is injective. Conversely, when $A$ is singular the map $K_n(A)\to K_n(\Frac A)$ usually has non-zero kernel (c.f.~Geller's conjecture \cite{Geller1986}), and conjecture 1 claims that this kernel is `small enough' to inject into $\hat K_n(A)$ (when $A$ is complete).
\end{remark}

\begin{remark}\label{remark_conjecture_for_non_CM}
Let $A$ be a one-dimensional local ring. Then $A$ is Cohen-Macaulay if and only if its depth is $\ge 1$, which means precisely that $\frak m$ contains at least one non-zero-divisor (which is in fact equivalent to $A$ not having any embedded point). So, in the strange situation that $A$ is not Cohen-Macaulay, $A$ coincides with its own total quotient ring $Q(A)$ (:=$S^{-1}A$ where $S$ is the set of zero-divisors of $A$). Otherwise pick any non-zero divisor $t\in A$: then $V(t)=\{\frak m\}$ in $\Spec A$, and $Q(A)=A[t^{-1}]=\Frac A$. In other words, when $A$ is Cohen-Macaulay (e.g., reduced or a local complete intersection), $\Frac A=Q(A)$, and this is why we choose to use the friendly notation $\Frac$. However, in full generality, $\Frac A$ is the punctured disk $\Spec A\setminus\frak m$. In fact, the philosophy of the conjecture is the following:
\begin{quote}
The $K$-theory of a one-dimensional, complete local ring is determined on its punctured disk and on all infinitesimal thickenings of the closed point.
\end{quote}

We also remark that if the reader wishes only to treat Cohen-Macaulay rings, then R.~Thomason and T.~Trobaugh's \cite{Thomason1990} theory of localisation and their notion of an `isomorphism infinitely near' which we will use may be replaced respectively by the older approach to localisation for Cartier divisors, due to D.~Quillen and written down by D.~Grayson \cite{Grayson1976}, and $S$-analytic isomorphisms (e.g.~\cite{Weibel1980}; the key idea goes back to Karoubi \cite[App.~5]{Karoubi1974}).
\end{remark}

\begin{remark}\label{remark_homotopy_approach}
From a homotopy theoretic point of view one may prefer to work with \[K_n^\sub{top}(A):=\pi_n(\holim_r K(A/\frak m^r)),\] where $K$ denotes any functorial choice of the $K$-theory spectrum. This fits into a short exact sequence \[0\to {\projlim_r}^1 K_{n+1}(A/\frak m^r)\to K_n^\sub{top}(A)\to\hat K_n(A)\to 0,\] where the term on the left is the first right derived functor of $\projlim$. Therefore we would obtain a weaker conjecture if we were to replace $\hat K_n$ by $K_n^\sub{top}$. However, in each of the following two diametric cases, we will see that $\projlim_r^1 K_{n+1}(A/\frak m^r)=0$ and so $K_n^\sub{top}(A)\isoto\hat K_n(A)$:
\begin{enumerate}
\item $A$ a one-dimensional, reduced, excellent, local ring containing $\bb Q$ (theorem \ref{theorem_Ktop_is_Khat}).
\item $A$ a one-dimensional, local ring with finite residue field (indeed, we will note in proposition \ref{proposition_thanks_to_Vigleik} that $K_{n+1}(A/\frak m^r)$ is finite for all $r$, whence $\projlim_r^1 K_{n+1}(A/\frak m^r)=0$).
\end{enumerate}
Although the homotopy-theoretic groups $K_n^\sub{top}$ are more convenient for abstract functorial constructions, in this paper it does not matter whether we choose to work with $\hat K_n$ or $K_n^\sub{top}$. For another homotopy-theoretic approach, see corollary \ref{corollary_homotopy_description}.
\end{remark}

The first aim of this section is to develop tools to pass between Henselian and complete rings, and then we will state our main results.

\begin{proposition}\label{proposition_local_injectivity_from_Artin_Approximation}
Suppose that $A$ is a one-dimensional, excellent, Henselian local ring, and let $n\ge 0$. Then $K_n(A)\to K_n(\hat A)$ is injective.
\end{proposition}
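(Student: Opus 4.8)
The plan is to deduce the statement from the Artin approximation property of $A$, using the fact that algebraic $K$-theory commutes with filtered colimits of rings in order to reduce the injectivity to a bare existence statement about $A$-points of affine $A$-schemes of finite type.

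First I would realise $\hat A$ as a filtered colimit $\hat A=\indlim_i B_i$ in the category of $A$-algebras, with each $B_i$ of finite presentation over $A$ and with all transition maps and the canonical maps $B_i\to\hat A$ being $A$-algebra homomorphisms; every $A$-algebra is such a colimit. Since Quillen $K$-theory commutes with filtered colimits of rings, $K_n(\hat A)=\indlim_i K_n(B_i)$. Consequently, if $\xi\in K_n(A)$ lies in the kernel of $K_n(A)\to K_n(\hat A)$, then, since an element of a filtered colimit of abelian groups vanishes already at a finite stage, there is an index $i$ for which $\xi$ is killed by the map $K_n(A)\to K_n(B_i)$ induced by the structure morphism $A\to B_i$.

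The crux is then to produce an $A$-algebra retraction of $A\to B_i$. Write $B_i=A[X_1,\dots,X_N]/(f_1,\dots,f_m)$ with the $f_j\in A[X_1,\dots,X_N]$; the homomorphism $B_i\to\hat A$ is the same datum as a tuple $\hat x\in\hat A^{\,N}$ satisfying $f_j(\hat x)=0$ for every $j$. Because $A$ is excellent and Henselian local it enjoys the Artin approximation property (Popescu's general N\'eron desingularisation), so there exists $x\in A^N$ with $f_j(x)=0$ for all $j$ --- no $\frak m$-adic proximity to $\hat x$ is required, only existence. Such an $x$ defines an $A$-algebra homomorphism $\rho\colon B_i\to A$, $X_k\mapsto x_k$, and $\rho$ composed with the structure map $\iota\colon A\to B_i$ is the identity of $A$ since $\rho$ is $A$-linear. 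Functoriality of $K_n$ now gives $\xi=\rho_*(\iota_*(\xi))=\rho_*(0)=0$, which is the desired injectivity.

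There is no serious obstacle here beyond the invocation of Artin approximation; the one point that must be handled with care is that the algebras appearing in the colimit be of finite presentation over $A$ itself (not merely over $\bb Z$), so that the defining equations of $B_i$ have coefficients in $A$ and the approximation theorem for $A$ applies directly. I note that neither the smoothness of the $B_i$ nor the one-dimensionality of $A$ is used: one could alternatively invoke Popescu's structure theorem to write $\hat A$ as a colimit of \emph{smooth} $A$-algebras and build the retraction from the residual point over $A/\frak m$ together with the Henselian property of $A$, but the plain approximation statement is the shortest route.
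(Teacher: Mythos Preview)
Your proof is correct and follows essentially the same strategy as the paper: write $\hat A$ as a filtered colimit of finite-type $A$-algebras, use that $K$-theory commutes with filtered colimits, and produce an $A$-algebra retraction of each piece. The only difference is cosmetic: the paper invokes Popescu's theorem to arrange that the algebras in the colimit are \emph{smooth}, and then obtains the retraction by lifting the section over the residue field using the Henselian property, whereas you invoke Artin approximation (which is itself a consequence of Popescu) to solve the defining equations of a finitely presented $B_i$ directly in $A$. You even flag this alternative yourself; both routes rest on the same deep input and yield the same section-of-$K_n$ argument.
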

\begin{proof}
This is a standard Artin Approximation type argument which applies to any functor $A\op{-}Algs\to Ab$ commuting with filtered inductive limits: the argument is as follows.

By Neron-Popescu disingularization \cite{Popescu1985} (see also \cite{Popescu1986} and \cite{Swan1998}), $\hat A$ may be written as an filtered inductive limit of finite-type, smooth $A$-algebras. Since $K$-theory commutes with filtered inductive limits, it is now enough to show that if $R$ is a finite-type, smooth $A$-algebra which admits an $A$-algebra morphism $f$ to $\hat A$, then $K_n(A)\into K_n(R)$. But the assumption on the existence of $f$ means that $R/f^{-1}(\frak m_{\hat A})=A/\frak m_A$; that is, $A/\frak m_A\to R\otimes_A A/\frak m_A$ has a section. Since $A$ is Henselian and $R$ is smooth over $A$, this lifts to a section of $A\to R$. Hence $K_n(A)\to K_n(R)$ has a section, and thus it is certainly injective.
\end{proof}

\begin{corollary}\label{corollary_equivalence_of_completed_and_Henselian}
Suppose that $A$ is a one-dimensional, excellent, Henselian local ring, and let $n\ge 0$. Then the following square of abelian groups is bicartesian and the vertical arrows are injective:
\[\xymatrix{
K_n(A)\ar[r]\ar@{^(->}[d] & K_n(\Frac A)\ar@{^(->}[d]\\
K_n(\hat A)\ar[r] & K_n(\Frac\hat A)
}\]
\end{corollary}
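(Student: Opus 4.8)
The left-hand vertical map $K_n(A)\to K_n(\hat A)$ is injective by Proposition~\ref{proposition_local_injectivity_from_Artin_Approximation}, so it remains to prove that the square is bicartesian; injectivity of the right-hand vertical map will then follow formally. A commutative square of abelian groups is bicartesian precisely when the sequence
\[0\to K_n(A)\xto{\alpha}K_n(\hat A)\oplus K_n(\Frac A)\xto{\beta}K_n(\Frac\hat A)\to 0\]
is exact, where $\alpha(x)=(x,x)$ and $\beta(x,y)=x-y$ (images taken in $K_n(\Frac\hat A)$). I would produce this short exact sequence from a homotopy cartesian square of $K$-theory spectra (nonconnective, as in Thomason--Trobaugh~\cite{Thomason1990}, so agreeing with ordinary $K$-theory in non-negative degrees) coming from localisation.

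Since $\dim A=1$ and $A$ is local, the punctured spectrum $\Spec A\setminus\{\frak m\}$ consists of the finitely many minimal primes of $A$ and equals the affine scheme $\Spec(\Frac A)$; likewise $\Spec\hat A\setminus\{\hat{\frak m}\}=\Spec(\Frac\hat A)$. Thomason--Trobaugh localisation applied to the quasi-compact open immersion $\Spec(\Frac A)\into\Spec A$ then gives a homotopy fibre sequence
\[K(\Spec A \text{ on } \frak m)\To K(A)\To K(\Frac A),\]
and its analogue for $\hat A$, compatibly with the flat ring map $A\to\hat A$. The crucial point is that base change along $A\to\hat A$ induces an equivalence $K(\Spec A \text{ on } \frak m)\isoto K(\Spec\hat A \text{ on } \hat{\frak m})$: the exact functor $-\otimes_A\hat A$ on perfect complexes with cohomology supported at the closed point is fully faithful, since $\hat A$ is flat over $A$ and any finitely generated $\frak m$-power-torsion module is unchanged by $-\otimes_A\hat A$; and it is essentially surjective, since both triangulated categories are generated, as thick subcategories, by the Koszul complex on a single $\frak m$-primary element $t\in A$, which $-\otimes_A\hat A$ carries to the Koszul complex on $t\in\hat A$. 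Hence the square
\[\xymatrix{
K(A)\ar[r]\ar[d] & K(\Frac A)\ar[d]\\
K(\hat A)\ar[r] & K(\Frac\hat A)
}\]
is homotopy cartesian, and there is a Mayer--Vietoris long exact sequence
\[\cdots\to K_n(A)\xto{\alpha}K_n(\hat A)\oplus K_n(\Frac A)\xto{\beta}K_n(\Frac\hat A)\xto{\partial}K_{n-1}(A)\to\cdots.\]

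To extract the short exact sequence: $\alpha$ is injective because $K_n(A)\to K_n(\hat A)$ is. For surjectivity of $\beta$ I would show $\partial=0$ by comparing our square with the degenerate square whose two rows are both $K(\hat A)\to K(\Frac\hat A)$: naturality of Mayer--Vietoris identifies the composite $K_n(\Frac\hat A)\xto{\partial}K_{n-1}(A)\to K_{n-1}(\hat A)$ with the boundary map of the degenerate square, which vanishes, and since $K_{n-1}(A)\to K_{n-1}(\hat A)$ is injective (the argument of Proposition~\ref{proposition_local_injectivity_from_Artin_Approximation} applies to $K_i$ in every degree $i\in\bb Z$, using only that the functor commutes with filtered colimits), we get $\partial=0$. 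Exactness in the middle is part of Mayer--Vietoris, so the square is bicartesian. Finally, if $y\in K_n(\Frac A)$ maps to $0$ in $K_n(\Frac\hat A)$, then $(0,y)\in\ker\beta=\Im\alpha$, so $y$ is the image of some $x\in K_n(A)$ whose image in $K_n(\hat A)$ is $0$; hence $x=0$ and $y=0$, giving injectivity of the right-hand vertical map.

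The main obstacle is the equivalence $K(\Spec A \text{ on } \frak m)\isoto K(\Spec\hat A \text{ on } \hat{\frak m})$ --- the invariance of $K$-theory with supports at the closed point under completion. The delicate part is the generation statement for perfect complexes with support in the non-Cohen--Macaulay case, where $\frak m$ may consist entirely of zerodivisors (and $\Frac A$ may even equal $A$): there one must appeal to the fact that the Koszul complex on any $\frak m$-primary element generates $D_{\mathrm{perf}}(\Spec A \text{ on } \frak m)$ as a thick subcategory with no regularity hypothesis. Everything else is the formal diagram chase above and the elementary identification of the punctured spectrum with $\Spec(\Frac A)$.
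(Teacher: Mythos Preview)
Your approach is correct and essentially the same as the paper's: both derive the long exact Mayer--Vietoris sequence from Thomason--Trobaugh localisation (the paper phrases this by noting that $A\to\hat A$ is an ``isomorphism infinitely near $\frak m$'', citing \cite[3.19.2]{Thomason1990} directly rather than re-proving the equivalence of $K$-theory with supports via Koszul generators as you do), and both then use Proposition~\ref{proposition_local_injectivity_from_Artin_Approximation} to break it into short exact sequences. Your argument for $\partial=0$ via comparison with a degenerate square is correct but more elaborate than necessary: since $\Im(\partial)=\ker(\alpha_{n-1})$ by exactness of the Mayer--Vietoris sequence itself, and $\alpha_{n-1}$ is already injective (its first component $K_{n-1}(A)\to K_{n-1}(\hat A)$ is), $\partial=0$ follows immediately.
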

\begin{proof}
The natural map $A\to\hat A$ is an isomorphism infinitely near $\frak m$ (see the appendix for a review of this notion) and so there is a resulting long exact Mayer-Vietoris sequence: \[\cdots\to K_n(A)\to K_n(\hat A)\oplus K_n(\Frac A)\to K_n(\Frac\hat A)\to\cdots\] The previous proposition implies that this breaks into short exact sequences, from which everything follows.
\end{proof}

It follows from the corollary that if $A$ is a one-dimensional, excellent, Henselian local ring, then \[\ker(K_n(A)\to \hat K_n(A)\oplus K_n(\Frac A))=\ker(K_n(\hat A)\to\hat K_n(A)\oplus K_n(\Frac A)).\] So Conjecture 1 is equivalent to the seemingly stronger:
\begin{quote}
{\bf Conjecture 1':} If $A$ is a one-dimensional, excellent, Henselian local ring, then the diagonal map \[K_n(A)\to \hat K_n(A)\oplus K_n(\Frac A)\] is injective for all $n\ge 0$. 
\end{quote}

Next we show how the validity of the conjecture yields a long exact Mayer-Vietoris sequence in $K$-theory for $\hat K_n(A)$, analogous to the infinitely near one used in the proof of the previous corollary. Let $A$ be a one-dimensional, Noetherian local ring; we define the completed $K$-groups of $\Frac A$, denoted $\hat K_n(\Frac A)$, to be given by the following pushout diagram:
\xysquare{K_n(\hat A)}{K_n(\Frac\hat A)}{\hat K_n(A)}{\hat K_n(\Frac A)}{->}{->}{-->}{-->}
So conjecture 1 for $\hat A$ predicts that this diagram is not only cocartesian, but actually bicartesian.

\begin{remark}
Thanks to corollary \ref{corollary_equivalence_of_completed_and_Henselian}, we could equivalently define $\hat K_n(\Frac A)$ as the pushout of \xysquare{K_n(A^h)}{K_n(\Frac A^h)}{\hat K_n(A)}{}{->}{->}{}{} i.e.~We can replace completions by Henselisations everywhere.
\end{remark}

\begin{remark}
The problem of whether the group $\hat K_n(\Frac A)$ depends only on $\Frac A$, and not on $A$, is closely related to conjecture 1. But for this reason we will never write $\hat K_n(F)$, even if $F=\Frac A$.
\end{remark}

\begin{proposition}\label{proposition_MV_sequence_for_K_hat}
Suppose that $A$ is a one-dimensional, Noetherian local ring such that conjecture 1 holds for $\hat A$. Then there is a natural long exact Mayer-Vietoris sequence \[\cdots\to K_n(A)\to\hat K_n(A)\oplus K_n(\Frac A)\to\hat K_n(\Frac A)\to\cdots\]
\end{proposition}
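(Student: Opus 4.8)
The plan is to combine the infinitely-near Mayer-Vietoris sequence already used in the proof of Corollary \ref{corollary_equivalence_of_completed_and_Henselian} with the defining pushout square for $\hat K_n(\Frac A)$, using Conjecture 1 for $\hat A$ to turn that pushout into a bicartesian square. First I would observe that, since $A\to\hat A$ is an isomorphism infinitely near $\frak m$, there is a long exact Mayer-Vietoris sequence
\[\cdots\to K_n(A)\to K_n(\hat A)\oplus K_n(\Frac A)\to K_n(\Frac\hat A)\to\cdots.\]
Separately, Conjecture 1 for $\hat A$ says exactly that the square
\[\xymatrix{
K_n(\hat A)\ar[r]\ar[d] & K_n(\Frac\hat A)\ar[d]\\
\hat K_n(\hat A)=\hat K_n(A)\ar[r] & \hat K_n(\Frac A)
}\]
is bicartesian (it is cocartesian by definition of $\hat K_n(\Frac A)$, and injectivity of the diagonal map $K_n(\hat A)\to\hat K_n(A)\oplus K_n(\Frac\hat A)$ promotes it to bicartesian — note $\hat A$ is complete so it is one-dimensional complete local and the conjecture applies). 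A bicartesian square of abelian groups yields a short exact Mayer-Vietoris sequence $0\to K_n(\hat A)\to\hat K_n(A)\oplus K_n(\Frac\hat A)\to\hat K_n(\Frac A)\to 0$.

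The key step is then to splice these two sequences. I would set this up as a map of long exact sequences: the Mayer-Vietoris sequence of the infinitely-near isomorphism $A\to\hat A$ maps to the (split, i.e. short exact in each degree) Mayer-Vietoris sequence attached to the bicartesian square above, via the natural maps $K_n(\hat A)\to\hat K_n(\hat A)$ and $K_n(\Frac\hat A)\to\hat K_n(\Frac A)$ and the identity on $K_n(\Frac A)$. Concretely, one forms the mapping-cone-type total complex, or equivalently composes the boundary maps: the long exact sequence
\[\cdots\to K_n(A)\to\hat K_n(A)\oplus K_n(\Frac A)\to\hat K_n(\Frac A)\to\cdots\]
should be extracted as the Mayer-Vietoris sequence of the outer bicartesian-in-the-limit square obtained by pasting the square
\[\xymatrix{K_n(A)\ar[r]\ar[d] & K_n(\Frac A)\ar[d]\\ K_n(\hat A)\ar[r] & K_n(\Frac\hat A)}\]
(bicartesian with injective verticals, by Corollary \ref{corollary_equivalence_of_completed_and_Henselian}) on top of the square
\[\xymatrix{K_n(\hat A)\ar[r]\ar[d] & K_n(\Frac\hat A)\ar[d]\\ \hat K_n(A)\ar[r] & \hat K_n(\Frac A)}\]
(bicartesian by Conjecture 1 for $\hat A$). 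A vertical composite of bicartesian squares is bicartesian, so the outer square with corners $K_n(A)$, $K_n(\Frac A)$, $\hat K_n(A)$, $\hat K_n(\Frac A)$ is bicartesian; its associated Mayer-Vietoris sequence is precisely the claimed one. Naturality in $A$ is inherited from naturality of the $K$-theory localisation/Mayer-Vietoris sequences and of the pushout defining $\hat K_n(\Frac A)$.

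The main obstacle I anticipate is purely bookkeeping rather than conceptual: verifying that pasting of homotopy-(co)cartesian squares is legitimate at the level of the $K$-theory \emph{spectra}, so that one genuinely obtains a long exact sequence of homotopy groups rather than merely a diagram that is bicartesian degreewise. The cleanest route is to work throughout with the relevant homotopy fiber sequences of spectra — the infinitely-near localisation sequence of Thomason–Trobaugh on the one hand, and the homotopy pushout defining the "completed $K$-theory spectrum" $\holim_r K(A/\frak m^r)$ on the other — paste those, and only then pass to homotopy groups, invoking Conjecture 1 (equivalently, the vanishing of the relative term) to see that the resulting long exact sequence has the displayed form with $\hat K_n$ (rather than $K_n^{\mathrm{top}}$) in it; in the cases where $\projlim^1$ is nonzero one phrases the statement with $K_n^{\mathrm{top}}$ as in Remark \ref{remark_homotopy_approach} and the argument is unchanged.
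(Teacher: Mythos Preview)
Your overall strategy is sound and close to the paper's, but there is a genuine gap in the execution. You invoke Corollary~\ref{corollary_equivalence_of_completed_and_Henselian} to assert that the square with corners $K_n(A)$, $K_n(\Frac A)$, $K_n(\hat A)$, $K_n(\Frac\hat A)$ is bicartesian with injective verticals. That corollary, however, requires $A$ to be excellent and \emph{Henselian} (the injectivity comes from Artin approximation), whereas the present proposition is stated for an arbitrary one-dimensional Noetherian local ring. For such $A$ the square is in general \emph{not} degreewise bicartesian; what survives is only the long exact Mayer--Vietoris sequence coming from the infinitely-near isomorphism $A\to\hat A$. So your ``paste two bicartesian squares'' argument does not go through as written, and the fallback you sketch (pass to spectra and paste homotopy cartesian squares) produces the sequence with $K_n^{\sub{top}}$ rather than $\hat K_n$; Conjecture~1 alone does not bridge that gap, since $\hat K_n(\Frac A)$ is defined as a pushout of \emph{groups}, not as $\pi_n$ of a spectrum.

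The paper's proof avoids this by never asking the first square to be bicartesian. It takes the long exact sequence for $A\to\hat A$ directly, writes beneath it the candidate sequence $\cdots\to K_n(A)\to\hat K_n(A)\oplus K_n(\Frac A)\to\hat K_n(\Frac A)\to K_{n-1}(A)\to\cdots$ with the boundary maps \emph{defined} via the universal property of the pushout square for $\hat K_n(\Frac A)$, and then does an elementary diagram chase: Conjecture~1 for $\hat A$ makes that single pushout square bicartesian, and this alone forces exactness of the bottom row. In other words, only the \emph{second} of your two squares needs to be bicartesian; the first is replaced by the long exact sequence itself.
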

\begin{proof}
Using the long exact Mayer-Vietoris sequence from the proof of corollary \ref{corollary_equivalence_of_completed_and_Henselian}, we may construct a commutative diagram
\[\xymatrix{
\cdots \ar[r] & K_n(A) \ar[d]\ar[r] & K_n(\hat A)\oplus K_n(\Frac A) \ar[r]\ar[d] & K_n(\Frac\hat A) \ar[r]\ar[d] & K_{n-1}(A) \ar[r]\ar[d] & \cdots\\
\cdots \ar@{-->}[r] & K_n(A) \ar[r] & \hat K_n(A)\oplus K_n(\Frac A) \ar[r] & \hat K_n(\Frac A)\ar@{-->}[r] & K_{n-1}(A) \ar[r] & \cdots
}\]
(with exact top row) where the dotted arrows are defined by the universal pushout property of the central square in such a way that the bottom row is a complex and the diagram commutes. A diagram chase shows that if conjecture 1 is true for $\hat A$ then the bottom row is actually exact.
\end{proof}

\begin{remark}
Continuing remark \ref{remark_homotopy_approach}, we explain a more homotopy theoretic alternative to $\hat K_n(\Frac A)$.

Let $K^\sub{top}(A):=\holim_rK(A/\frak m^r)$, so that $K_n^\sub{top}(A)=\pi_n(K^\sub{top}(A))$. Next let $K^\sub{top}(\Frac A)$ denote the homotopy pushout, in the category of spectra, of the diagram \xysquare{K(A)}{K(\Frac A)}{K^\sub{top}(A)}{K^\sub{top}(\Frac A),}{->}{->}{-->}{-->} and set $K_n^\sub{top}(\Frac A):=\pi_n(K_n^\sub{top}(\Frac A))$. In the category of spectra, homotopy pushout and pullback diagrams coincide, so there is a resulting long exact sequence \[\cdots\to K_n(A)\to K_n^\sub{top}(A)\oplus K_n(\Frac A)\to K_n^\sub{top}(\Frac A)\to\cdots \] The weaker version of conjecture 1 introduced in remark \ref{remark_homotopy_approach} predicts that this breaks into short exact sequences if $A$ is complete.

It may appear strange that $K_n^\sub{top}(\Frac A)$ was defined directly using $A$, whereas $\hat K_n(\Frac A)$ was defined via a pushout using the completion $\hat A$. However, the map $A\to\hat A$ is an isomorphism infinitely near $\frak m$, resulting in a homotopy cartesian square \xysquare{K(A)}{K(\Frac A)}{K(\hat A)}{K(\Frac\hat A)}{->}{->}{->}{->} of spectra, whence the natural map $K^\sub{top}(\Frac A)\to K^\sub{top}(\Frac\hat A)$ is a weak equivalence. That is, we are free to replace $A$ by $\hat A$ when defining $K_n^\sub{top}(\Frac A)$.
\end{remark}

Having discussed some theoretical issues surrounding the conjecture, we now turn to results. Initial faith in the conjecture was inspired by the following special case:

\begin{proposition}\label{proposition_seminormal}
Let $A$ be a one-dimensional, excellent, Henselian local ring. Then conjecture 1' is true for $K_2$ if $A$ contains a field and is seminormal with rational singularities.
\end{proposition}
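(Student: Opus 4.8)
The plan is to reduce immediately to the case of a seminormal, rational curve singularity over a field and then invoke the classical structure theory for $K_2$ of such rings. By Conjecture 1' $\Leftrightarrow$ Conjecture 1 (corollary \ref{corollary_equivalence_of_completed_and_Henselian}), it suffices to treat complete $A$; and since the residue field may be enlarged/completion taken, the relevant test case is $A = k[[\text{seminormal rational singularity}]]$, the completed local ring of a seminormal rational curve singularity over a field $k$. For such rings the classical calculations of Dennis--Krusemeyer \cite{Dennis1979}, Weibel \cite{Weibel1980}, and Geller \cite{Geller1986}, collected in the appendix, describe $K_2(A)$ and the kernel of $K_2(A)\to K_2(\Frac A)$ explicitly: the normalisation $\tilde A$ is a product of power series rings (hence regular, so $K_2(\tilde A)\into K_2(\Frac\tilde A)$), and the relative term measuring the failure of $K_2(A)\into K_2(\tilde A)$ — equivalently, up to the regular part, the kernel in question — is a quotient of a group built from the conductor square, which one identifies with a piece of $\Omega^1$ or a Milnor-type group attached to $\tilde A/A$.

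First I would recall the localisation / conductor (Mayer--Vietoris) square: with $\mathfrak c$ the conductor of $A$ in $\tilde A$, and $A/\mathfrak c \to \tilde A/\mathfrak c$ the induced map, one has a Mayer--Vietoris sequence in $K$-theory (seminormality of $A$ guarantees the relevant surjectivity so that the sequence behaves well in the range $n \le 2$), giving an exact sequence
\[
K_2(A)\to K_2(\tilde A)\oplus K_2(A/\mathfrak c)\to K_2(\tilde A/\mathfrak c)\to K_1(A)\to\cdots
\]
and hence an identification of $\ker(K_2(A)\to K_2(\tilde A))$ with (a subquotient of) $\ker(K_1(A)\to K_1(\tilde A)\oplus K_1(A/\mathfrak c))$ pulled back — in practice, with the explicit group computed by Dennis--Krusemeyer and Geller, which in the rational seminormal case is a finite-dimensional $k$-vector space supported entirely on the closed point. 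Since $\tilde A$ is regular, $K_2(\tilde A)\into K_2(\Frac\tilde A)$, so $\ker(K_2(A)\to K_2(\Frac A)) = \ker(K_2(A)\to K_2(\tilde A))$, and the whole kernel is this explicit $\mathfrak m$-supported group.

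Next I would show this kernel already injects into $K_2(A/\mathfrak m^r)$ for suitable $r$. Because the kernel is supported on the closed point — concretely, because $\mathfrak c \supseteq \mathfrak m^r$ for some $r$ (the conductor is $\mathfrak m$-primary), and the generating cycles (Dennis--Stein symbols $\langle a,b\rangle$ with $b\in\mathfrak c$, or Loday symbols supported in $\mathfrak c$) are visibly defined over $A/\mathfrak c$, hence over $A/\mathfrak m^r$ — the kernel classes come from $K_2(A/\mathfrak c)$, so they factor through $K_2(A) \to K_2(A/\mathfrak m^r) \to \hat K_2(A)$. It remains to check the composite $\ker \hookrightarrow K_2(A/\mathfrak c)$ is injective, i.e. that no kernel class dies on $A/\mathfrak c$; this is exactly what the Dennis--Krusemeyer--Weibel--Geller computation provides, since they identify the kernel by writing it as a quotient of a group living over $A/\mathfrak c$ and showing the relations already hold there. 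Combining, the diagonal $K_2(A)\to \hat K_2(A)\oplus K_2(\Frac A)$ is injective.

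The main obstacle is matching conventions and hypotheses between the cited classical papers: Dennis--Krusemeyer, Weibel, and Geller work with particular presentations (seminormality via pinch-point or node-type singularities, specific characteristic hypotheses, graded vs. complete settings), and one must verify that "seminormal with rational singularities over a field" is covered by — or reduces to — the cases they treat, and that the group they compute is genuinely the kernel of $K_2(A)\to K_2(\Frac A)$ and genuinely injects into $K_2(A/\mathfrak c)$ rather than merely surjecting onto or receiving a map from it. Pinning down that the relevant $SK_1$/conductor obstruction vanishes so the Mayer--Vietoris sequence cleanly isolates the $K_2$-kernel, and that rationality of the singularity is precisely the condition making this obstruction group the one computed in the appendix, is where the real care is needed; the rest is bookkeeping with the conductor square.
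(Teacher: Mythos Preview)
Your proposal has a genuine gap at the key step. In the seminormal rational case the conductor $\frak c$ equals $\frak m$ itself (this is essentially the definition: seminormality means $\frak c$ is radical in $\tilde A$, and rationality means $\tilde A/\frak m = k^n$, so $\frak c = \frak M_{\tilde A} = \frak m_A$). Thus $A/\frak c = k$, and your plan to show that $\ker(K_2(A)\to K_2(\tilde A))$ injects into $K_2(A/\frak c)$ amounts to showing it injects into $K_2(k)$. But it does not: using the Dennis--Krusemeyer decomposition (theorem \ref{theorem_Dennis_Krusemeyer}), that kernel is exactly the ``cross term'' $\bigoplus_{i<j} I_i/I_i^2\otimes_k I_j/I_j^2$, and this is annihilated by the map to $K_2(A/\frak m)=K_2(k)$. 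Your argument that ``Dennis--Stein symbols $\langle a,b\rangle$ with $b\in\frak c$ are visibly defined over $A/\frak c$'' points the wrong way --- if $b\in\frak c$ then the symbol becomes trivial in $A/\frak c$, not well-defined there. Relatedly, the conductor-square Mayer--Vietoris sequence you write down is not exact for Quillen $K$-theory (excision fails), so it cannot be used as a black box.

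The paper's proof sidesteps all of this by applying Dennis--Krusemeyer \emph{twice}: once to $A\cong k\oplus I_1\oplus\cdots\oplus I_n$ and once to $A/\frak m^2\cong k\oplus I_1/I_1^2\oplus\cdots\oplus I_n/I_n^2$. The cross terms $I_i/I_i^2\otimes_k I_j/I_j^2$ appear identically in both decompositions, so they survive in $K_2(A/\frak m^2)$; the remaining summands $L_i=\ker(K_2(k+I_i)\to K_2(k))$ inject into $K_2(\Frac A)$ by Gersten for the DVRs $k+I_i$. The point you are missing is that one must go to $A/\frak m^2$, not $A/\frak c=A/\frak m$, and the clean way to see the kernel survives there is the second application of the structure theorem rather than any excision argument.
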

\begin{proof}
By the arguments above, it is enough to treat the case that $A$ is actually complete. So, by the appendix reviewing seminormality, $A$ has the following structural description: it contains a coefficient field $k$, and ideals $I_1,\dots,I_n$, in such a way that $A\cong k\oplus I_1\oplus\dots\oplus I_n$ as an abelian group. Moreover, the maximal ideal of $A$ is $\frak m=I_1+\dots+I_n$, and each ring $k+I_j$, which is the localisation of $A$ away from the prime ideal $\frak q_j=\sum_{i\neq j}I_i$, is a complete discrete valuation ring with residue field $k$.

Applying Dennis-Krusemeyer's theorem on the $K$-theory of rings with such structure (see theorem \ref{theorem_Dennis_Krusemeyer}), we deduce that there is a resulting isomorphism \[K_2(A)\cong K_2(k)\oplus \bigoplus_{i=1}^n L_i\oplus \bigoplus_{i<j}(I_i/I_i^2\otimes_k I_j/I_j^2),\] where $L_i:=\ker(K_2(k+I_i)\to K_2(k))$.

Let $B=A/\frak m^2$, and notice that the structural description of $A$ induces a similar description of $B$: namely, $B\cong k\oplus I_1/I_1^2\oplus\dots\oplus I_n/I_n^2$. A second application of Dennis-Krusemeyer's theorem yields \[K_2(B)\cong K_2(k)\oplus \bigoplus_{i=1}^nL_i'\oplus\bigoplus_{i<j}(I_i/I_i^2\otimes_k I_j/I_j^2),\] where $L_i':=\ker(K_2(k+I_i/I_i^2)\to K_2(k))$.

Therefore, ignoring the $L_i$ and $L_i'$ factors, $K_2(A)\to K_2(B)$ is an isomorphism. But $k+I_i$ is a complete discrete valuation ring of equal characteristic, so its $K_2$ embeds into $K_2$ of its field of fractions, by Quillen's proof of the Gersten conjecture. In conclusion, $K_2(A)\to K_2(A/\frak m^2)\oplus K_2(\Frac A)$ is injective, which is more than enough to complete the proof.
\end{proof}

The following is our main theorem giving evidence for the conjecture:

\begin{theorem}\label{theorem_char_zero_local_result}
Let $A$ be a one-dimensional, excellent, Henselian local ring. Then conjecture 1' is true for all $n\ge 0$ if $A$ is reduced and contains $\bb Q$.
\end{theorem}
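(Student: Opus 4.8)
The plan is to pass to the normalisation and play off the comparison between $K$-theory and cyclic homology of $\bb Q$-algebras. By the equivalence of Conjectures~1 and 1$'$ established above we may assume that $A$ is complete; since the completion of an excellent reduced ring is reduced, $A$ is then a one-dimensional, reduced, complete local $\bb Q$-algebra, so it admits a coefficient field, and its normalisation $\tilde A$ is a finite $A$-algebra (excellence) which is a finite product $\prod_iB_i$ of complete, equicharacteristic-zero discrete valuation rings. As each $B_i$ is a regular local ring containing a field, Gersten's conjecture — Quillen's theorem for the equicharacteristic discrete valuation rings $B_i$ — gives $K_n(\tilde A)\into K_n(\Frac\tilde A)=K_n(\Frac A)$. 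Hence, if $x\in\ker(K_n(A)\to\hat K_n(A)\oplus K_n(\Frac A))$, then $x$ also dies in $K_n(\tilde A)$ and in every $K_n(A/\frak m^r)$, and it suffices to prove that these two vanishings force $x=0$.

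The technical heart is the claim that the square of spectra
\[\xymatrix{
K(A)\ar[r]\ar[d] & K(\tilde A)\ar[d]\\
\holim_rK(A/\frak m^r)\ar[r] & \holim_rK(\tilde A/\frak m^r\tilde A)
}\]
is homotopy cartesian — this is the precise sense in which ``$K$-theory satisfies excision in the $\frak m$-adic limit''. Let $\frak c\sseq\tilde A$ be the conductor, which is an ideal of both $A$ and $\tilde A$ and is $\frak m$-primary since $A$ is reduced and one-dimensional; then
\[\xymatrix{
A\ar[r]\ar[d] & \tilde A\ar[d]\\
A/\frak c\ar[r] & \tilde A/\frak c
}\]
is a Milnor square, as is its reduction modulo $\frak m^r$ for every $r\gg0$, and these reductions all have the same lower row. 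A short manipulation of total fibres shows that the first square above is homotopy cartesian for any functor $E$ in place of $K$ provided the birelative $E$-theory of the $\frak m^r$-truncated conductor squares is pro-isomorphic, as $r$ varies, to the birelative $E$-theory of the conductor square itself. For $E=K^{\op{inf}}:=\mathrm{fib}(K\to HN)$ this holds because Corti\~nas' proof of the KABI conjecture gives excision for $K^{\op{inf}}$ along Milnor squares, so both birelative theories vanish; the same applies to $E=HP$, using the excision theorem for periodic cyclic homology (Cuntz and Quillen). Via the fibre sequences $K^{\op{inf}}\to K\to HN$ and $HC[1]\to HN\to HP$, it then remains only to treat $E=HC$.

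This last case is the genuinely hard input: one must show that the birelative cyclic homology groups of the $\frak m^r$-truncated conductor squares form a pro-abelian group which is pro-isomorphic to the (constant) birelative cyclic homology of the conductor square. This is exactly the kind of Artin--Rees statement in Hochschild and cyclic homology proved by A.~Krishna, and it is here that both hypotheses are genuinely used: ``reduced'' so that $\tilde A$ is a finite normalisation whose cyclic homology is computed by Hochschild--Kostant--Rosenberg, and ``contains $\bb Q$'' so that Goodwillie's theorem on nilpotent extensions and Corti\~nas' theorem apply and $HP$ is nil-invariant. Goodwillie's theorem also enters in identifying the relative $K$-theory of the truncation towers $A/\frak m^r\to A/\frak m$ (and of the $\tilde A$-analogues) with relative cyclic homology, which is what puts Krishna's results into directly applicable form. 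I expect this $HC$ Artin--Rees step to be the main obstacle; everything around it is formal homotopy-theoretic bookkeeping together with inputs already isolated above.

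Granting the cartesian square, theorem~\ref{theorem_Ktop_is_Khat}, applied to $A$ and to the factors $B_i$ of $\tilde A$ (which kills the relevant $\projlim^1$ terms), identifies $\pi_n\holim_rK(A/\frak m^r)$ with $\hat K_n(A)$ and similarly for $\tilde A$, producing a long exact Mayer--Vietoris sequence
\[\cdots\to K_n(A)\to K_n(\tilde A)\oplus\hat K_n(A)\to\hat K_n(\tilde A)\to K_{n-1}(A)\to\cdots.\]
From it, $\ker(K_n(A)\to K_n(\tilde A)\oplus\hat K_n(A))$ is the cokernel of $K_{n+1}(\tilde A)\oplus\hat K_{n+1}(A)\to\hat K_{n+1}(\tilde A)$, and (using that $\frak m$ generates the unit ideal of $\Frac A$, so that this boundary map automatically lands in $\ker(K_n(A)\to K_n(\Frac A))$) the conjecture reduces to the assertion that $\hat K_n(\tilde A)$ is generated by the images of $K_n(\tilde A)$ and of $\hat K_n(A)$; this is in turn obtained by analysing the completed conductor square — whose total fibre is again controlled, through Krishna's results, by the explicitly computable birelative cyclic homology of $A\to\tilde A$ — together with the localisation sequences of the $B_i$ and one further appeal to Gersten's conjecture. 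The same strategy, with $\tilde A$ replaced by a suitable finite product of rings of truncated power series, covers the case in which $A$ is a ring of truncated polynomials over a reduced $\bb Q$-algebra.
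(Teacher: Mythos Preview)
Your overall strategy---compare $A$ with its normalisation $\tilde A$, use Gersten for the discrete valuation ring factors, and establish an ``excision in the $\frak m$-adic limit'' statement via Krishna's Artin--Rees results together with Corti\~nas' theorem---is indeed the paper's approach. But two genuine gaps remain.

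First, Krishna's Artin--Rees results for Hochschild and cyclic homology are proved only for rings \emph{essentially of finite type over a field}; they do not apply directly to a complete (or Henselian) local $\bb Q$-algebra. The paper handles this explicitly: one chooses a coefficient field for $\hat A$, constructs a subring $A_\circ\subseteq\hat A$ essentially of finite type over that field with $\hat{A_\circ}=\hat A$, proves the injectivity for $A_\circ^h$ by passing to the filtered colimit over local \'etale extensions of $A_\circ$, and then transfers back to $A$ via corollary~\ref{corollary_equivalence_of_completed_and_Henselian}. You have skipped this reduction entirely, so your invocation of Krishna's results is unjustified. Your appeal to theorem~\ref{theorem_Ktop_is_Khat} does not help here, since the proof of that theorem uses the very same reduction.

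Second, the endgame---the surjectivity of $K_{n+1}(\tilde A)\oplus\hat K_{n+1}(A)\to\hat K_{n+1}(\tilde A)$---is where the paper does its most delicate work, and your sketch does not indicate how to obtain it. ``Localisation sequences of the $B_i$ and one further appeal to Gersten's conjecture'' give information about $K_*(B_i)\to K_*(\Frac B_i)$, not about $K_*(B_i)\to K_*(B_i/\frak m_i^r)$; neither the conductor square nor Krishna's birelative results produce this surjectivity on their own. In the paper this step is the content of lemmas~\ref{lemma_not_top_degree_of_Hodge_for_truncated_polys}--\ref{lemma_top_degree_of_Adams} and remark~\ref{remark_surjectivity_reinterpretation}: one decomposes the relative group $K_n(B/I,\frak M/I)$ along the Adams filtration, shows that the pieces of weight $<n$ vanish in the pro-limit via an explicit reduced-cyclic-homology calculation for truncated polynomial rings, and shows that the weight-$n$ piece is covered by $K_n(B,\frak M)$ using the Nesterenko--Suslin identification with Milnor $K$-theory. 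Without this argument (or a substitute for it) the proof is incomplete.
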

\begin{proof}
The proof is deferred until section \ref{section_main_proofs}.
\end{proof}

The nilpotent extension we can offer on top of the previous theorem is the following:

\begin{theorem}\label{theorem_char_zero_truncated_polynomials}
Let $A$ be a one-dimensional, excellent, Henselian local ring; as in the previous theorem, suppose that $A$ is reduced and contains $\bb Q$. Then conjecture 1' is also true for $A[t]/\pid{t^e}$ for all $n,e\ge 0$.
\end{theorem}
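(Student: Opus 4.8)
The plan is to reduce Theorem \ref{theorem_char_zero_truncated_polynomials} to Theorem \ref{theorem_char_zero_local_result} by exploiting that $A[t]/\pid{t^e}$ is a nilpotent thickening of $A$, and that in the world of $\bb Q$-algebras nilpotent thickenings are controlled by cyclic homology via Goodwillie's theorem. Write $B := A[t]/\pid{t^e}$, a one-dimensional, excellent, Henselian local ring which is again a $\bb Q$-algebra (its maximal ideal is $\frak m_A + (t)$, and it is Henselian since $A$ is and $B$ is integral over $A$). Note $\Frac B = (\Frac A)[t]/\pid{t^e}$ and $B/\frak m_B^r$ is a quotient of $(A/\frak m_A^{r})[t]/\pid{t^e}$, so all the players in conjecture 1' for $B$ are nilpotent extensions of the corresponding players for $A$ (with nil ideal generated by $t$). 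The idea is: subtract off the reduction $B_\sub{red} = A$ and reduce to a statement purely about relative $K$-theory, then feed that statement into cyclic homology where Krishna's Artin–Rees results apply just as in section \ref{section_main_proofs}.

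The key steps, in order. First, set up the relative groups: let $K_n(B, tB)$ denote the relative $K$-group of the (split, nilpotent) ideal $tB \subset B$, and similarly $K_n(\Frac B, t\Frac B)$ and $\hat K_n(B, tB) := \projlim_r K_n(B/\frak m_B^r, \text{image of } tB)$. Because $tB$ is nilpotent and $B$ contains $\bb Q$, Goodwillie's theorem gives natural isomorphisms $K_n(B,tB) \cong HC_{n-1}(B,tB)$ (and likewise for $\Frac B$ and for each truncation $B/\frak m_B^r$), where $HC$ is cyclic homology over $\bb Q$. Second, observe that conjecture 1' for $A$ (Theorem \ref{theorem_char_zero_local_result}) handles the "reduction" part: the diagonal $K_n(A) \to \hat K_n(A) \oplus K_n(\Frac A)$ is injective. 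So by the long exact sequence of the pair $(B, tB)$ and the five-lemma-type diagram chase, conjecture 1' for $B$ will follow once we show the relative diagonal
\[
K_n(B,tB) \To \hat K_n(B,tB) \oplus K_n(\Frac B, t\Frac B)
\]
is injective. Third, translate this into cyclic homology: via Goodwillie it becomes the claim that
\[
HC_{n-1}(B,tB) \To \projlim_r HC_{n-1}(B/\frak m_B^r, \cdot) \oplus HC_{n-1}(\Frac B, t\Frac B)
\]
is injective. Fourth — and this is where the work of section \ref{section_main_proofs} gets reused — invoke Krishna's Artin–Rees theorems \cite{Krishna2005, Krishna2010} to get pro-excision for (relative) Hochschild and cyclic homology along the ideal defining $A \to \tilde A$ (the normalisation), so that one can compare $HC_\ast(B, tB)$ with $HC_\ast$ of the truncated polynomial ring over the smooth ring $\tilde A$, where everything is computed explicitly in terms of Kähler differentials; the point is that $\tilde A[t]/\pid{t^e}$ is regular-in-the-$A$-direction, so its relative cyclic homology along $t$ injects into that of its total quotient ring, exactly as in the $e=1$ case.

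I expect the main obstacle to be step four: verifying that Krishna's excision results, which are stated for the rings appearing in section \ref{section_main_proofs}, genuinely pass to the truncated polynomial rings $B/\frak m_B^r$ and $\tilde A[t]/\pid{t^e}$, and that the relevant conductor ideals and their powers interact correctly with the extra nilpotent variable $t$. Concretely, one needs that the conductor for $B \hookrightarrow \tilde B := \tilde A[t]/\pid{t^e}$ is just the conductor for $A \hookrightarrow \tilde A$ extended, and that the pro-systems $\{HC_\ast(B/\frak m_B^r \otimes_B \tilde B \, / \, B/\frak m_B^r)\}_r$ vanish pro-isomorphically — this should follow formally from the $e=1$ case because $HC_\ast$ of a truncated polynomial algebra $R[t]/\pid{t^e}$ decomposes (functorially in $R$, over $\bb Q$) into a finite sum of shifts of Hochschild/cyclic and differential-form pieces of $R$, so pro-vanishing for $R = A/\frak m_A^r$ forces it for $R[t]/\pid{t^e}$. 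The rest is the diagram chase of step two and bookkeeping; the genuinely new content is confirming the $t$-direction is harmless, which it is precisely because we are over $\bb Q$ and can appeal to the Hochschild–Kostant–Rosenberg–type splitting.
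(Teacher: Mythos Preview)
Your proposal is correct and follows essentially the same strategy as the paper: split off the nilpotent $t$-direction via Goodwillie, reduce to a cyclic homology statement, and then appeal to Krishna plus Theorem~\ref{theorem_char_zero_local_result}. The paper's execution of your ``step four'' is however more direct than what you sketch. Rather than redoing pro-excision for $B\to\tilde B=\tilde A[t]/\pid{t^e}$, the paper applies the Kassel--Goodwillie decomposition \[\tilde{HC}_n(R\otimes_kA)\cong\bigoplus_{p+q=n}\tilde{HC}_p(R)\otimes_k HH_q(A)\] with $R=k[t]/\pid{t^e}$ (this is the precise form of the ``functorial decomposition into shifts'' you allude to). This immediately reduces the desired injectivity of $\tilde{HC}_n(C)\to\tilde{HC}_n(C/\frak m_C^r)\oplus\tilde{HC}_n(\Frac C)$ to the injectivity of $HH_q(A)\to HH_q(A/\frak m^r)\oplus HH_q(\Frac A)$ for each $q$; the latter is one of Krishna's Artin--Rees results verbatim (after noting $HH_q(\tilde A)=\Omega_{\tilde A/k}^q\hookrightarrow HH_q(\Frac A)$). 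So no new conductor bookkeeping or pro-excision at the $B$-level is needed: the $t$-variable is handled purely by the tensor decomposition, and the normalisation comparison happens entirely at the level of $HH_*(A)$. Your route via $B\to\tilde B$ would also work, but it repackages the same input less efficiently. One caution worth flagging explicitly (the paper does this): the Kassel--Goodwillie decomposition is not natural in $A$---it depends on splitting the reduced SBI sequence---so strictly one works with the associated filtration rather than the direct sum; this does not affect the injectivity argument.
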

\begin{proof}
Also deferred until section \ref{section_main_proofs}.
\end{proof}

We finish this section by showing that a property akin to completeness is required for the conjecture to hold; even a mild singularity causes it to fail for local rings of curves. In fact, we prove a stronger result which shows moreover that the weaker $K_n^\sub{top}$ version of conjecture 1 discussed in remark \ref{remark_homotopy_approach} also fails in such a situation:

\begin{proposition}
Let $k$ be any field and let $A$ be the local ring of the singular point on the nodal curve $Y^2=X^2(X+1)$ over $k$. Then the map \[K_2(A)\to K_2(\hat A)\oplus K_2(\Frac A)\] is not injective.

(This implies that $A$ fails to satisfy the conjecture since there are natural maps $K_n(\hat A)\to K_n^\sub{top}(A)\to \hat K_n(A)$.)
\end{proposition}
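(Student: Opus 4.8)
The map $A\to\hat A$ is faithfully flat, hence injective, and since no nonzero element of the domain $A$ can vanish on a formal branch it carries nonzerodivisors to nonzerodivisors; thus there is a natural ring map $\Frac A\to\Frac\hat A$, and the square relating $K_2(A),K_2(\hat A),K_2(\Frac A),K_2(\Frac\hat A)$ commutes. Write $\kappa_A:=\ker(K_2(A)\to K_2(\Frac A))$ and $\kappa_{\hat A}:=\ker(K_2(\hat A)\to K_2(\Frac\hat A))$. Because the composite $\kappa_A\to K_2(\Frac A)\to K_2(\Frac\hat A)$ is zero, the completion map sends $\kappa_A$ into $\kappa_{\hat A}$, giving $\rho\colon\kappa_A\to\kappa_{\hat A}$, and a one-line chase yields
\[\ker\big(K_2(A)\to K_2(\hat A)\oplus K_2(\Frac A)\big)=\ker\rho.\]
So the plan is to prove that $\rho$ is \emph{not} injective. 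I take $\Char k\neq2$, so that the origin is a genuine node; in characteristic $2$ the same equation defines a cusp, which is unibranch and must be treated by a separate, direct comparison of $K_2(A)$ with $K_2(\hat A)$.

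First I would pin down the two kernels. The normalisation $\tilde A$ is the semilocalisation of $k[T]$ at $T=\pm1$ via $X=T^2-1$, $Y=T^3-T$; being regular, Quillen's Gersten theorem gives $K_2(\tilde A)\into K_2(\Frac\tilde A)=K_2(\Frac A)$, whence $\kappa_A=\ker(K_2(A)\to K_2(\tilde A))$. The same remark for the two complete discrete valuation rings in $\hat{\tilde A}=k[[s]]\times k[[t]]$ (the normalisation of $\hat A\cong k[[s,t]]/(st)$) gives $\kappa_{\hat A}=\ker(K_2(\hat A)\to K_2(\hat{\tilde A}))$. Now $\hat A$ is exactly the complete, seminormal node, so the computation inside the proof of proposition~\ref{proposition_seminormal} applies verbatim: Dennis--Krusemeyer's theorem~\ref{theorem_Dennis_Krusemeyer} identifies $\kappa_{\hat A}$ with the single cross term $I_1/I_1^2\otimes_k I_2/I_2^2\cong k$, and that copy of $k$ is detected faithfully already in $K_2(\hat A/\hat{\frak m}^2)=K_2(A/\frak m^2)$. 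Consequently, to produce a nonzero element of $\ker\rho$ it is enough to exhibit a class $\xi\in\kappa_A$ with $\xi\neq0$ in $K_2(A)$ whose image in $K_2(A/\frak m^2)$ vanishes: since $\kappa_{\hat A}\into K_2(A/\frak m^2)$ and the two reductions modulo $\frak m^2$ agree, vanishing of $\xi$ mod $\frak m^2$ forces $\rho(\xi)=0$.

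For the last step I would construct $\xi$ as a relative Dennis--Stein symbol $\langle a,b\rangle$ with $a\in\frak m^2$ (so that it dies modulo $\frak m^2$ automatically), the entries chosen via the parametrisation so that $\langle a,b\rangle$ is killed in $K_2(\Frac A)=K_2(k(T))$ by a Steinberg relation. The structural reason such a class should exist is that $\kappa_A$ is strictly larger than the one-dimensional $\kappa_{\hat A}$: in the complete node the branches separate, $I_1I_2=0$, and the relative $K_2$ of $\hat A\subset\hat{\tilde A}$ collapses onto its leading cross term $k$; but the incomplete node is a \emph{domain}, $I_1I_2\neq0$, and the relative $K_2$ of the conductor square $A\subset\tilde A$ retains further higher-filtration classes arising from the global connectedness of the irreducible curve, which become trivial in the disconnected formal neighbourhood. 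I expect the genuine obstacle to be the nonvanishing $\xi\neq0$ in $K_2(A)$ — this is precisely where completeness enters. I would establish it either from the classical $K_2$-computations for curve singularities of Weibel~\cite{Weibel1980} and Geller~\cite{Geller1986}, which are valid over any field, or, when $\Char k=0$, by comparing the birelative $K$-theory of $A\subset\tilde A$ with the corresponding cyclic homology through the theorems of Goodwillie~\cite{Goodwillie1986} and Corti\~nas~\cite{Cortinas2006} and checking that the latter group is not annihilated by completion.
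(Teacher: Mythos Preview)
Your reduction to $\ker\rho$ and the identification $\kappa_{\hat A}\cong I_1/I_1^2\otimes_k I_2/I_2^2\cong k$ via Dennis--Krusemeyer are correct, and your strategy of exhibiting a nonzero $\xi\in\kappa_A$ that dies modulo $\frak m^2$ would indeed finish the argument. But there is a genuine gap: you never construct $\xi$, and the ``structural reason'' you give --- that $\kappa_A$ must be strictly larger than $\kappa_{\hat A}$ because the branches of the incomplete node do not separate --- is exactly the assertion you are trying to prove, not evidence for it. The citations to Weibel, Geller, Goodwillie, and Corti\~nas are too unspecific to fill this; none of those papers hands you the required element.

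The paper's proof takes a quite different route which also explains why your explicit-symbol approach is unlikely to succeed. It first passes from $K_2$ to Karoubi--Villamayor $KV_2$, using $K_1$-regularity of the seminormal node together with the isomorphism $NK_2(A)\cong NK_2(\hat A)$ to show that the kernel in question equals $\ker(KV_2(A)\to KV_2(\hat A)\oplus KV_2(\tilde A))$. Since the $KV$-version of Dennis--Krusemeyer gives $KV_*(\hat A)\hookrightarrow KV_*(\tilde{\hat A})$, this reduces further to $\ker(KV_2(A)\to KV_2(\tilde A))$. The $GL$-fibration Mayer--Vietoris sequence for the conductor square then identifies this kernel with $K_3(K)/\Im\big(KV_3(\tilde A)\to K_3(K)\big)$, where $K=\tilde A/\frak m_A\cong k\times k$; so the obstruction lives in $K_3$, not $K_2$. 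Finally, Quillen localisation on $\bb A^1_k$ shows that $K_3^{\mathrm{ind}}(k)\to K_3^{\mathrm{ind}}(\tilde A)$ is surjective, so surjectivity of $K_3(\tilde A)\to K_3(K)$ would force the diagonal $K_3^{\mathrm{ind}}(k)\to K_3^{\mathrm{ind}}(k)\oplus K_3^{\mathrm{ind}}(k)$ to be surjective, which is impossible since $K_3^{\mathrm{ind}}(k)\neq 0$ for every field. The nonzero class in $\ker\rho$ therefore arises as the boundary of an \emph{indecomposable} element of $K_3(k)$ --- by definition invisible to Steinberg and Dennis--Stein symbols --- which is why the direct construction you outline does not get traction.
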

\begin{proof}
$A$ is a one-dimensional, local domain, essentially of finite type over $k$, and it is the prototypical example of a seminormal ring with rational singularities. In fact, for most of the proof, we will work with any $A$ which is a one-dimensional, seminormal local ring, essentially of finite type over $k$, with rational singularities. Set $F=\Frac A$, and notice that $F$ is a finite product of fields, hence regular.

Since $A$ has rational singularities, corollary \ref{corollary_K_1_regularity_for_seminormal_rings} implies that $A$ is $K_1$-regular, and so there is an exact sequence (see the appendix for a discussion of $KV$-theory) \[NK_2(A)\to K_2(A)\to KV_2(A)\to 0.\] The same holds for $\hat A$ in place of $A$, and so we obtain two exact sequences: 
\[\xymatrix{
0\ar[r]& \op{Im}NK_2(A)\ar[r]\ar[d]&K_2(A)\ar[r]\ar[d]& KV_2(A)\ar[d]\ar[r]& 0\\
0\ar[r]& \op{Im}NK_2(\hat A)\ar[r]&K_2(\hat A)\ar[r]&KV_2(\hat A)\ar[r]&0
}\]
Moreover, since $NK_2(A)\to NK_2(\hat A)$ is an isomorphism \cite[Corol.~1.4]{Weibel1980}, the left vertical arrow is surjective. If it is not injective then the proof is finished, because $NK_2(A)$ vanishes in $K_2(F)$. Therefore we may assume it is injective, hence an isomorphism, in which case \[\ker(K_2(A)\to K_2(\hat A))=\ker(KV_2(A)\to KV_2(\hat A)).\] Moreover, since $K_2(F)=KV_2(F)$, we even deduce that \[\ker(K_2(A)\to K_2(\hat A)\oplus K_2(F))=\ker(KV_2(A)\to KV_2(\hat A)\oplus KV_2(F)).\] Even more, Quillen's proof of the Gersten conjecture in the geometric case implies that $KV_2(\tilde A)\to KV_2(F)$ is injective, so the kernels of the previous line, which we wish to show are non-zero, are the same as \[\kappa:=\ker(KV_2(A)\to KV_2(\hat A)\oplus KV_2(\tilde A)).\]

Next, lemma \ref{lemma_structure_of_complete_seminormal_rings} and the subsequent comment imply that we may write $\hat A\cong k\oplus I_1\oplus\cdots\oplus I_m$, where each ring $k+I_i$ is a complete discrete valuation ring, and our modification of theorem \ref{theorem_Dennis_Krusemeyer} for Karoubi-Villameyor theory then implies that \[KV_*(\hat A)\cong KV_*(k)\oplus\bigoplus_{i=1}^mKV_*(k+I_i,I_i).\] Since $\tilde{\hat A}=\prod_{i=1}^m k+I_i$, it is now clear that $KV_*(\hat A)\to KV_*(\tilde{\hat A})$ is injective. Therefore \[\kappa=\ker(KV_2(A)\to KV_2(\tilde A)).\]

Next, the $GL$-fibration
\[\xymatrix{
A\ar[r]\ar[d] & \tilde A\ar[d]\\
k\ar[r] & K:=\tilde A/\frak m_A
}\]
produces a long exact Mayer-Vietoris sequence in Karoubi-Villamayer theory \[\cdots\to KV_3(\tilde A)\oplus K_3(k)\to K_3(K)\to KV_2(A)\to KV_2(\tilde A)\oplus K_2(k)\to K_2(K)\to\cdots\] Since $K_*(k)$ is a direct summand of $KV_*(\tilde A)$, it follows from this sequence that
\begin{align*}
\kappa
	&=\ker(KV_2(A)\to KV_2(\tilde A)\oplus K_2(k))\\
	&=\Im(K_3(K)\to KV_2(A))\\
	&=K_3(K)/\Im(KV_3(\tilde A)\to K_3(K))
\end{align*}
In conclusion, to show that $\kappa$ is non-zero, it is necessary and sufficient to prove that $K_3(\tilde A)\to K_3(K)$ is not surjective!

So, finally, we let $A$ be the local ring of the nodal singularity as specified in the proposition. Then $B:=\tilde A$ is the semi-local ring obtained by localising $C:=k[t]$ away from two distinct points $x_1,x_2\in \bb A_k^1$. Quillen's localisation theorem implies that there is a short exact sequence \[0\to K_*(k)\to K_*(B)\to \bigoplus_{x\in\bb A_k^1\setminus\{x_1,x_2\}} K_{*-1}(k)\to 0.\] 

However, in the case $*=3$, the boundary map $\bigoplus_{x\neq x_1,x_2}\bor_x:K_3(B)\to\bigoplus_{x\neq x_1,x_2}K_2(k)$ is already surjective when restricted to the symbolic part $K_3^\sub{sym}(B)$ of $K_3(B)$; this is because $K_2(k)$ is generated by symbols and the tame symbols satisfy \[\bor_x\{\theta_1,\theta_2,t_y\}=\begin{cases}\{\theta_1,\theta_2\}&x=y\\0&x\neq y,\end{cases}\] if $x,y\in\bb A_k^1$, $\theta_i\in \mult k$, and $t_y\in k[t]$ is a local parameter at $y$.

Writing $K_3^\sub{ind}=K_3/K_3^\sub{sym}$ as usual, this implies that $K_3^\sub{ind}(k)\to K_3^\sub{ind}(B)$ is surjective. So, if \[K_3^\sub{ind}(B)\to K_3^\sub{ind}(B/\frak M_B)=K_3^\sub{ind}(k)\oplus K_3^\sub{ind}(k)\] were surjective (which would certainly follow from the surjectivity we are aiming to disprove), we would deduce that the diagonal map $K_3^\sub{ind}(k)\to K_3^\sub{ind}(k)\oplus K_3^\sub{ind}(k)$ were surjective. However, $K_3^\sub{ind}(k)$ is non-zero: for example, its $n$-torsion is $H^0(k,\mu_n^{\otimes 2})$ for any $n$ not divisible by $\op{char}k$ by \cite{Levine1987}, and this is non-zero by picking any such $n$ such that $\mu_n\subseteq\mult k$. This completes the proof.
\end{proof}

\section{Calculations in residue characteristic zero and proofs of theorems \ref{theorem_char_zero_local_result} and \ref{theorem_char_zero_truncated_polynomials}}\label{section_main_proofs}
In this section we prove conjecture 1' in the case when $A$ is reduced and contains $\bb Q$, or is truncated polynomials over such a ring, and simultaneously give additional structural results for the completed $K$-groups. The proofs are based on comparison theorems with cyclic homology, namely T.~Goodwillie's \cite{Goodwillie1986} result for nilpotent ideals, and G.~Corti\~nas' proof \cite{Cortinas2006} of the KABI conjecture.

\subsection{Proof of theorem \ref{theorem_char_zero_local_result}}
The essential technical results and ideas to prove the theorem come from two papers by A.~Krishna \cite{Krishna2005, Krishna2010} on Artin-Rees type properties in Hochschild and cyclic homology. Before describing these results and sketching our proof, we need to summarise the theory of categories of pro objects.

\begin{remark}\label{remark_pro_cats}
Everything we need about categories of pro objects may be found in one of the standard references, such as the appendix to \cite{ArtinMazur1969}, or \cite{Isaksen2002}. We will use $\op{Pro}Ab$, the category of pro abelian groups. We find this to be a convenient and conceptual way to state many of the results, replacing Krishna's repeated use of his `doubling trick'.

If $\cal C$ is a category, then $\op{Pro}\cal C$, the {\em category of pro objects of $\cal C$}, is the following: an object of $\op{Pro}\cal{C}$ is a contravariant functor $X:\cal I\to \cal{C}$, where $\cal I$ is a small cofiltered category (it is fine to assume that $\cal I$ is a codirected set); this object is usually denoted \[\projlimf_{i\in\cal I} X(i)\quad\quad\mbox{ or }\quad\quad\projlimf_{i}X(i),\] or by some other suggestive notation. The morphisms in $\op{Pro}\cal{C}$ from $\projlimf_{i\in\cal I}X$ to $\projlimf_{j\in \cal J}Y(j)$ are \[\Hom_{\op{Pro}\cal{C}}(\projlimf_{i\in\cal I}X,\projlimf_{j\in\cal J}Y):=\projlim_{j\in \cal J}\indlim_{i\in \cal I}\Hom_\cal{C}(X(i),Y(j)),\] where the right side is a genuine pro-ind limit in the category of sets. Composition is defined in the obvious way. 

There is a fully faithful embedding $\cal C\to\op{Pro}\cal C$. Assuming that projective limits exist in $\cal C$, there is a realisation functor \[\op{Pro}\cal C\to\cal C,\quad \projlimf_{i\in\cal I} X(i)\mapsto \projlim_{i\in\cal I} X(i),\] which is left exact but not right exact (its derived functors are precisely $\projlim^1,\projlim^2$, etc.), and which is a left adjoint to the aforementioned embedding.

Suppose that $\cal A$ is an abelian category. Then $\op{Pro}\cal A$ is an abelian category. Moreover, given a system of exact sequences \[\cdots\To X_{n-1}(i)\To X_n(i)\To X_{n+1}(i)\To\cdots,\] the formal limit \[\cdots \To \projlimf_{i\in\cal I}X_{n-1}(i)\To \projlimf_{i\in\cal I}X_n(i)\To \projlimf_{i\in\cal I}X_{n+1}(i)\To\cdots\] is an exact sequence in $\op{Pro}\cal A$. Of course, we cannot deduce that \[\cdots \To \projlim_{i\in\cal I}X_{n-1}(i)\To \projlim_{i\in\cal I}X_n(i)\To \projlim_{i\in\cal I}X_{n+1}(i)\To\cdots\] is exact in $\cal A$ (assuming that all these projective limits exist in $\cal A$) because the realisation functor is not right exact.
\end{remark}

Let $k$ be a field of characteristic zero and $\ell$ any subfield of $k$ (usually $\ell=\bb Q$ for us). Let $A$ be a reduced ring which is essentially of finite type over $k$, and $B=\tilde A$ its normalisation; assume that $B$ is smooth over $k$ (e.g., this is automatic if $A$ is one-dimensional, which is our only case of interest). A {\em conducting ideal} is any non-zero ideal of $B$ contained inside $A$.

Krishna proves, using a variety of Artin-Rees type, preliminary results concerning Andr\'e-Quillen, Hochschild, and cyclic homology, that if $I\subseteq A$ is any conducting ideal, then the natural map on double-relative cyclic homology \[HC_n^\ell(A,B,I^r)\to HC_n^\ell(A,B,I)\] is zero for $r\gg0$. Thanks to Corti\~nas' proof of the KABI conjecture we can replace $HC_n^{\bb Q}$ by $K_{n+1}$ in this result.

Two additional comments should be made at this point: Firstly, in \cite{Krishna2010}, Krishna works under the assumption that $A$ is actually a domain. However, all his proofs remain valid if $A$ is merely reduced: $B$ will then be a finite product of smooth domains, which is still smooth, and this is the important property he uses. Secondly, the required size of $s$ may depend on $n$, but this is never a problem.

In terms of pro abelian groups, this means that $\projlimf_{r\ge 1} K_n(A,B,I^r)=0$ in $\op{Pro}Ab$. Hence \[\projlimf_{r\ge 1} K_n(A,I^r)\to\projlimf_{r\ge 1} K_n(B,I^r)\] is an isomorphism, and applying the realisation functor tells us that $\projlim_{r\ge 1} K_n(A,I^r)\to\projlim_{r\ge 1} K_n(B,I^r)$ is an isomorphism of groups. Moreover, the usual splicing argument (in $\op{Pro}Ab$) gives us a long exact Mayer-Vietoris sequence in $\op{Pro}Ab$: \[\cdots K_n(A)\To \projlimf_{r\ge 1} K_n(A/I^r)\oplus K_n(B)\To \projlimf_{r\ge 1} K_n(B/I^r)\To\cdots,\tag{MV}\] which we will explain in a moment is a key component of our main proof.

If $A$ is one-dimensional and local, and $I$ is a fixed conducting ideal, then the following three systems of ideals are all mutually commensurable (i.e., cofinite in one another):\[\{\frak m_A^r:r\ge 1\},\quad \{I^r:r\ge 1\},\quad\{J:J\mbox{ a conducting ideal}\}\] So projective limits over over the three systems are the same in this case, and we will pass between them without mention.

We may now explain the main ideas of the proof of theorem \ref{theorem_char_zero_local_result}; let $A$ be a one-dimensional, reduced, excellent, Henselian local ring containing $\bb Q$, let $F=\Frac A$, and let $B=\tilde A$. To prove that $K_n(A)\to \hat K_n(A)\oplus K_n(F)$ is injective, it is enough to show that $K_n(A)\to \hat K_n(A)\oplus K_n(B)$ is injective, since $K_n(B)\into K_n(F)$ by Quillen's proof of the Gersten conjecture in this case. Since the realisation functor $\projlim:\op{Pro}Ab\to Ab$ is left exact, it is now enough to show that $K_n(A)\to\projlimf_rK_n(A/\frak m^r)\oplus K_n(B)$ is injective in $\op{Pro}Ab$. But the sequence (MV) above reduces this in turn to checking the surjectivity of $\projlimf_rK_n(A/\frak m^r)\oplus K_n(B)\to \projlimf_rK_n(B/\frak M^r)$ in $\op{Pro}Ab$; this surjectivity is essentially the content of lemma \ref{lemma_not_top_degree_of_Hodge_for_truncated_polys} -- remark \ref{remark_surjectivity_reinterpretation}. Unfortunately, the sequence (MV) has only been established for rings essentially of finite type over a field, which $A$ is not; this difficulty is overcome by reducing to the case that $A$ is the Henselization of such an essentially finite-type ring and then passing to the limit. It is easier to work with relative $K$-groups and treat the residue fields separately.

Now we begin applying and modifying Krishna's results for our purposes. The essence of $K_2$-versions of some of the following results are contained in the proof of \cite[Lem.~3.3]{Krishna2005}. We will see that the top degree part of the Hodge/Adams decomposition behaves completely differently to the lower degree parts, and we must treat them separately. We begin with the lower degree parts:

\begin{lemma}\label{lemma_not_top_degree_of_Hodge_for_truncated_polys}
Let $K/k$ be an arbitrary extension of fields of characteristic $0$, and let $e\ge 0$. For any $1\le i<n$, the natural map \[\tilde{HC}_n^{(i)}(K[t]/\pid{t^{2e}})\to \tilde{HC}_n^{(i)}(K[t]/\pid{t^e})\] of reduced cyclic homology groups (with respect to $k$) is zero.
\end{lemma}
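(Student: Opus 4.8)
The plan is to compute both sides explicitly using the standard description of the Hochschild and cyclic homology of truncated polynomial algebras over a field of characteristic zero. Write $R_e = K[t]/\langle t^e\rangle$. The reduced Hochschild homology $\tilde{HH}_*(R_e)$ (relative to $k$, but since we only need the $K$-module structure and the transition maps, and $K/k$ is an extension of characteristic-zero fields, this is harmless) is concentrated in a known pattern: as a graded $K$-vector space it is built from copies of $K$ and of $\Omega^1_{K/k}$, graded by the weight (Hodge) decomposition, and the Adams/Hodge piece $\tilde{HC}_n^{(i)}$ of cyclic homology is obtained from the Hochschild pieces via the SBI-sequence (or directly from the known computation of $HC_*$ of truncated polynomials, e.g.\ by Loday--Quillen, or by the mixed-complex description). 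The key structural fact I would invoke is that, in each fixed weight $i$ with $i < n$, the group $\tilde{HC}_n^{(i)}(R_e)$ depends on $e$ only through the divided-power / numerical factors attached to $t$, and crucially the transition map $R_{2e}\to R_e$ sending $t\mapsto t$ induces on these pieces multiplication by an integer that is divisible by an arbitrarily large power — concretely, it kills the relevant generators because a class supported in weight $i$ and homological degree $n$ with $n>i$ involves $t^{je}$-type contributions, and $t^{je}$ maps to $0$ in $R_e$ as soon as $j\ge 1$.

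Concretely, I would proceed as follows. First, recall (or quote from the appendix / standard references) the explicit description: for $A=K[t]/\langle t^e\rangle$ the reduced cyclic homology $\tilde{HC}_n(A)$ has a weight decomposition $\bigoplus_i \tilde{HC}_n^{(i)}(A)$, and each weight piece is a finite-dimensional $K$-vector space (resp.\ an $\Omega^1_{K/k}$-module in the `top' cases) whose basis is indexed by monomials $t^m$ with $1\le m\le e-1$ subject to congruence/degree constraints linking $m$, $n$, and $i$. Second, isolate the range $1\le i<n$: here every basis class is represented by a cycle whose $t$-degree is a positive multiple of a quantity $\ge e$ — this is exactly the phenomenon that makes the top weight $i=n$ exceptional and is why the lemma excludes it. Third, observe that the transition map on homology is induced by the $K$-algebra map $K[t]/\langle t^{2e}\rangle \to K[t]/\langle t^e\rangle$, and trace through the explicit cycles: a generator of $\tilde{HC}_n^{(i)}(K[t]/\langle t^{2e}\rangle)$ in the relevant range is represented by an element involving $t^m$ with $m\ge e$, which maps to $0$ in $K[t]/\langle t^e\rangle$. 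Hence the induced map on each such weight piece is identically zero, and summing over $i$ gives the claim.

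The main obstacle — and the step that needs care rather than routine bookkeeping — is pinning down the exact form of the cycles representing $\tilde{HC}_n^{(i)}$ in weight $i<n$ and verifying that the $t$-exponents occurring genuinely all lie in degree $\ge e$, with no `low-exponent' survivors. In the Hochschild-homology computation of $K[t]/\langle t^e\rangle$ this is transparent ($\tilde{HH}_n$ in the relevant weights is built from classes like $t^{je}\,dt\wedge\cdots$, i.e.\ exactly the obstruction to $t^e=0$), but passing to cyclic homology through the $B$-operator and the $SBI$ sequence, one must check that no new low-weight, low-$t$-exponent classes are created. The cleanest way around this is to work at the level of the mixed complex $(\Omega^{\bullet}_{R_e/k}, 0, d)$ or its reduced version and use that $HC$ of a mixed complex in char.\ $0$ is computed by the $(b,B)$-bicomplex, where the weight filtration is manifestly respected by the transition maps; then the vanishing is immediate once one knows, degree by degree, which $t$-powers appear — and that is a finite, explicit check. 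Alternatively, one can cite Krishna's $K_2$-level version from \cite[Lem.~3.3]{Krishna2005} as the prototype and note that the argument is formally the same in all degrees once the Hodge grading is used to separate the top piece.
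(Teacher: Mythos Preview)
Your proposal has a real gap, and it sits exactly where you wave your hands. You write that working relative to $k$ rather than $K$ ``is harmless'' because $K/k$ is a characteristic-zero field extension, and you then describe $\tilde{HH}_*(K[t]/\pid{t^e})$ as ``built from copies of $K$ and of $\Omega^1_{K/k}$''. Neither assertion is right. By Eilenberg--Zilber, $\tilde{HH}_n^k(K[t]/\pid{t^e})\cong\bigoplus_{p+q=n}\tilde{HH}_p^k(k[t]/\pid{t^e})\otimes_k HH_q^k(K)$, and $HH_q^k(K)=\Omega^q_{K/k}$ for \emph{all} $q\ge 0$, not just $q=0,1$; these higher forms can be enormous. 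More to the point, the Hodge weight on the left is the sum of the Hodge weights on the right, so to isolate $\tilde{HH}_n^{(<n)}$ of the $K$-algebra you must know that $HH_q^k(K)$ is concentrated in top weight $q$. That is true (because $K$ is geometrically regular over $k$), but it is precisely the content of the reduction step, not something you can dismiss. Without it you cannot conclude that the non-top pieces of $\tilde{HH}_n^k(K[t]/\pid{t^e})$ are controlled by the non-top pieces of $\tilde{HH}_p^k(k[t]/\pid{t^e})$ alone.

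The paper's proof is structured around exactly this point. It first uses the split SBI sequence for graded algebras to show that vanishing of $\tilde{HC}_n^{(<n)}(A_K)\to\tilde{HC}_n^{(<n)}(A'_K)$ is equivalent to the corresponding $\tilde{HH}$ statement (an easy induction, since $HH_i^{(<i)}=0$ for $i\le1$). It then uses Eilenberg--Zilber together with $HH_q(K)=HH_q^{(q)}(K)$ to prove that $\tilde{HH}_n^{(<n)}(A_K)=\bigoplus_{p+q=n}\tilde{HH}_p^{(<p)}(A)\otimes_k HH_q(K)$, which reduces everything to the case $K=k$. Only at that final stage is an explicit/filtration argument invoked, namely Krishna's \cite[Lem.~2.2]{Krishna2005}, which gives $\tilde{HH}_n^{(<n)}(k[t]/\pid{t^{2e}})\to\tilde{HH}_n^{(<n)}(k[t]/\pid{t^e})$ is zero. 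Your ``trace through the explicit cycles and see $t^m$ with $m\ge e$'' idea is aimed at this last step, but you have not actually verified it (you flag it yourself as the main obstacle), and your citation of \cite[Lem.~3.3]{Krishna2005} is to the wrong lemma: that one is a $K_2$ statement, whereas the relevant input here is the Hochschild-level Lemma~2.2. Even granting the base case, your argument as written does not supply the passage from $k$ to $K$, which is where the work lies.
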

\begin{proof}
For the sake of brevity, write $B_r=k[t]/\pid{t^r}$ for any $r$. All Hochschild and cyclic homologies in this proof are taken with respect to $k$, so the superscript $k$ will be omitted. The proof will work whenever $K$ is a Noetherian ring which is geometrically regular over $k$, for then $HH_*(K)\cong\Omega_{K/k}^*$, which is all we need $K$ to satisfy.

For any $k$-algebra $A$, we write \[HH_n^{(<n)}(A)=HH_n(A)/HH_n^{(n)}(A)=\bigoplus_{i=1}^{n-1}HH_n^{(i)}(A)\] for the quotient of $HH_n(A)$ by the top degree part of its Hodge decomposition. If $A=A_0\oplus A_1\oplus\cdots$ is positively graded then we write $\tilde{HH}_n(A)=HH_n(A)/HH_n(A_0)$ for the reduced Hochschild homology. The obvious conjunction of these notations will be also be used. Our claim in the statement of the lemma is that \[\tilde{HC}_n^{(<n)}(B_{2e}\otimes_k K)\To \tilde{HC}_n^{(<n)}(B_e\otimes_k K)\] is zero.

Let $A\to A'$ be a morphism of positively graded $k$-algebras; we will prove that the following conditions are equivalent:
\begin{enumerate}
\item $\tilde{HC}_n^{(<n)}(A_K)\to\tilde{HC}_n^{(<n)}(A_K')$ is zero for all $n\ge 0$.
\item As (i), but replacing $HC$ by $HH$.
\item As (ii), but also replacing $A_K$ and $A'_K$ by $A$ and $A'$ respectively.
\end{enumerate}

Firstly, the SBI sequence for the reduced Hochschild and cyclic homology of $A$ breaks into short exact sequences \cite[Thm.~4.1.13]{Loday1992}; moreover, the $S$, $B$, and $I$ maps respect the Hodge grading in such a way that we may ignore the top degree [Prop.~4.6.9, op.~cit.,]: \[0\to\tilde{HC}_{n-1}^{(<n-1)}(A_K)\xto{B} \tilde{HH}_n^{(<n)}(A_K)\xto{I}\tilde{HC}_n^{(<n)}(A_K)\to 0\] The equivalence (i)$\Leftrightarrow$(ii) follows from a trivial induction (to start the induction notice that $HH_i^{(<i)}=0$ for $i=0,1$).

Next, by the Eilenberg-Zilber theorem, \[\tilde{HH}_n(A_K)=\bigoplus_{p+q=n}\tilde{HH}_p(A)\otimes_kHH_q(K),\] and this decomposition is known to be compatible with the Hodge decompositions in that $\tilde{HH}_p^{(i)}(A)\otimes_k HH_q^{(j)}(K)\subseteq \tilde{HH}_n^{(i+j)}(A_K)$ [Prop.~4.5.14, op.~cit.]. But $K$ is a limit of finitely generated separable field extensions of $k$, so $HH_q(K)=HH_q^{(q)}(K)$ for all $q\ge 0$. Therefore \[\tilde{HH}_n^{(<n)}(A_K)=\bigoplus_{p+q=n}\tilde{HH}_p^{(<p)}(A)\otimes_kHH_q(K)\] It is now evident that $\tilde{HH}_n^{(<n)}(A_K)\to \tilde{HH}_n^{(<n)}(A_K')$ is zero for all $n\ge 0$ if and only if $\tilde{HH}_n^{(<n)}(A)\to \tilde{HH}_n^{(<n)}(A)$ is zero for all $n$, which is precisely the equivalence (ii)$\Leftrightarrow$(iii).

So, we have reduced the lemma to the case $K=k$, i.e.~to proving that \[\tilde{HH}_n^{(<n)}(B_{2e})\To \tilde{HH}_n^{(<n)}(B_e)\] is zero for all $n\ge 0$. This seems to be a reasonably well-known result which follows from filtration arguments; in any case it follows from \cite[Lem.~2.2]{Krishna2005}.
\end{proof}

In the following corollary and subsequent lemma notice that $K_n(B/I,\frak M/I)$ is a relative $K$-group for a nilpotent ideal in a $\bb Q$-algebra, hence is a $\bb Q$-vector space by C.~Weibel \cite[1.5]{Weibel1982}; so $K_n(B/I,\frak M/I)=K_n(B/I,\frak M/I)_{\bb Q}$. Of course, the same applies replacing $I$ by $I^2$.

\begin{corollary}\label{corollary_not_top_degree_of_Adams}
Let $B$ be a normal, one-dimensional, reduced semi-local ring containing $\bb Q$. Let $\frak M$ denote its Jacobson radical and let $I\subset B$ be an ideal with radical $\frak M$. For any $1\le i<n$, the natural map \[K_n^{(i)}(B/I^2,\frak M/I^2)\to K_n^{(i)}(B/I,\frak M/I)\] is zero.
\end{corollary}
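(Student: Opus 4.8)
The plan is to deduce this from Lemma~\ref{lemma_not_top_degree_of_Hodge_for_truncated_polys} in two moves: a structural identification of $B/I$ and $B/I^2$ with products of truncated polynomial algebras over fields, and then Goodwillie's theorem to convert the assertion about relative $K$-groups into one about reduced cyclic homology. For the structural step: since $B$ is normal, reduced, one-dimensional and semi-local it is a finite product of semi-local Dedekind domains, so writing $\frak m_1,\dots,\frak m_s$ for its maximal ideals, each localisation $B_{\frak m_j}$ is a discrete valuation ring containing $\bb Q$. As $I$ has radical $\frak M$ it is $\frak M$-primary, whence $IB_{\frak m_j}=\frak m_j^{e_j}B_{\frak m_j}$ for some $e_j\ge1$ and the Chinese remainder theorem gives $B/I^r\cong\prod_j B_{\frak m_j}/\frak m_j^{re_j}$ for $r=1,2$. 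Cohen's structure theorem, applied to the completions $\hat B_{\frak m_j}\cong\kappa_j[[t]]$ with $\kappa_j=B/\frak m_j$, furnishes compatible isomorphisms $B_{\frak m_j}/\frak m_j^{m}\cong\kappa_j[t]/\pid{t^m}$, so that $B/I^r\cong\prod_j\kappa_j[t]/\pid{t^{re_j}}$, the projection $B/I^2\to B/I$ becomes the product of the natural surjections $\kappa_j[t]/\pid{t^{2e_j}}\to\kappa_j[t]/\pid{t^{e_j}}$, and $\frak M/I^r$ corresponds to the product of the augmentation ideals $(t)$.

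Now $\frak M/I^r$ is nilpotent in the $\bb Q$-algebra $B/I^r$ (a power of $\frak M$ lies in $I$), so Goodwillie's theorem gives natural isomorphisms $K_n(B/I^r,\frak M/I^r)\cong HC_{n-1}(B/I^r,\frak M/I^r)$ (cyclic homology over $\bb Q$), compatible with the Adams decompositions up to the standard unit shift of weight, i.e.\ $K_n^{(i)}\cong HC_{n-1}^{(i-1)}$. Because $\kappa_j\into\kappa_j[t]/\pid{t^{re_j}}$ splits the augmentation, the relative cyclic homology of the pair $(\kappa_j[t]/\pid{t^{re_j}},(t))$ is the reduced cyclic homology $\tilde{HC}_*(\kappa_j[t]/\pid{t^{re_j}})$ in the sense of Lemma~\ref{lemma_not_top_degree_of_Hodge_for_truncated_polys} (with base field $\bb Q$). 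By naturality of all these identifications, the corollary reduces to showing that \[\tilde{HC}_{n-1}^{(i-1)}(\kappa_j[t]/\pid{t^{2e_j}})\To\tilde{HC}_{n-1}^{(i-1)}(\kappa_j[t]/\pid{t^{e_j}})\] vanishes for every $j$ and all $1\le i<n$. For $2\le i<n$ this is exactly Lemma~\ref{lemma_not_top_degree_of_Hodge_for_truncated_polys} applied to the extension $\kappa_j/\bb Q$ (which is geometrically regular, being separable since $\Char\kappa_j=0$), with its $n$ and $i$ replaced by $n-1$ and $i-1$. The case $i=1$ is vacuous: the hypothesis $1\le i<n$ forces $n-1\ge1$, and the weight-zero part $HC_{n-1}^{(0)}$ of a commutative $\bb Q$-algebra vanishes in positive degree (via the $SBI$ sequence in weight $0$ it reduces to $HH_{n-1}^{(0)}=0$), so both groups are $0$.

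The structural reduction is routine once one recalls that normality forces the $B_{\frak m_j}$ to be discrete valuation rings. The point requiring genuine care is the cyclic homology translation, specifically that Goodwillie's comparison respects the $\lambda$-operations with exactly the degree-one shift in Adams weight, so that the `lower-degree' range $1\le i<n$ on the $K$-theory side matches the range $0\le i-1<n-1$ governed by Lemma~\ref{lemma_not_top_degree_of_Hodge_for_truncated_polys} together with the trivial weight-zero vanishing; this compatibility, and the identification of relative with reduced cyclic homology, are the facts I would take from the appendix.
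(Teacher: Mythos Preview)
Your argument is correct and follows essentially the same route as the paper: Goodwillie's isomorphism (with Cathelineau's compatibility of $\lambda$-operations giving the weight shift $K_n^{(i)}\cong HC_{n-1}^{(i-1)}$), then the structural decomposition of $B/I^r$ into a product of truncated polynomial rings $\kappa_j[t]/\pid{t^{re_j}}$ via the Chinese remainder theorem and Cohen structure theory, reducing to Lemma~\ref{lemma_not_top_degree_of_Hodge_for_truncated_polys}. Your separate treatment of the boundary case $i=1$ (where the lemma's hypothesis $1\le i-1$ fails but $HC_{n-1}^{(0)}=0$ anyway) is a nice bit of care that the paper leaves implicit.
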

\begin{proof}
The Goodwillie isomorphism $K_n(B/I,\frak M/I)\isoto HC_{n-1}(B/I,\frak M/I)$ respects the Adams/Hodge decompositions by \cite{Cathelineau1990}, thus inducing \[K_n^{(i)}(B/I,\frak M/I)\isoto HC_{n-1}^{(i-1)}(B/I,\frak M/I)\] and similarly for $I^2$ in place of $I$ (here $HC=HC^{\bb Q}$).

Next notice that $B/I\cong\prod_{\frak n}B_{\frak n}/IB_{\frak n}$, where $\frak n$ varies over the finitely many maximal ideals of $B$; for each $\frak n$, there are compatible isomorphisms $B_{\frak n}/IB_{\frak n}\cong K[t]/\pid{t^e}$, $B_{\frak n}/I^2B_{\frak n}\cong K[t]/\pid{t^{2e}}$, for some integer $e>0$ and some characteristic zero field $K$ (both depending on $\frak n$). Therefore $HC_{n-1}^{(i)}(B/I,\frak M/I)$ is a finite direct sum of terms of the form \[HC_{n-1}^{(i-1)}(K[t]/\pid{t^e},tK[t]/\pid{t^e}) = \tilde{HC}_{n-1}^{(i-1)}(K[t]/\pid{t^e}),\] and similarly for $I^2$.

We have reduced the problem to proving that \[\tilde{HC}_{n-1}^{(i-1)}(K[t]/\pid{t^{2e}})\to\tilde{HC}_{n-1}^{(i-1)}(K[t]/\pid{t^e})\] is zero, which is exactly the previous lemma.
\end{proof}

Next we analyse the top degree part of the Adams decomposition:

\begin{lemma}\label{lemma_top_degree_of_Adams}
Let $B$ be a semi-local ring containing $\bb Q$. Let $\frak M$ denote its Jacobson radical and let $I\subset B$ be an ideal with radical $\frak M$. Then \[K_n^{(n)}(B/I,\frak M/I)\subseteq\Im(K_n(B,\frak M)\To K_n(B/I,\frak M/I)).\]
\end{lemma}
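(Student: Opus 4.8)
The idea is to identify $K_n^{(n)}(B/I,\frak M/I)$ with a subgroup generated by symbols that visibly lift to $K_n(B,\frak M)$, using the passage to cyclic homology. Since $\frak M/I$ is a nilpotent ideal of the $\bb Q$-algebra $B/I$, Goodwillie's theorem \cite{Goodwillie1986} gives an isomorphism $K_n(B/I,\frak M/I)\isoto HC_{n-1}(B/I,\frak M/I)$ (cyclic homology over $\bb Q$), which by \cite{Cathelineau1990} is compatible with the Adams/Hodge decompositions, so $K_n^{(n)}(B/I,\frak M/I)\cong HC_{n-1}^{(n-1)}(B/I,\frak M/I)$, the top Hodge summand.

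First I would give a concrete description of this top summand. For any commutative $\bb Q$-algebra $R$ one has $HH_m^{(m)}(R)=\Omega^m_{R/\bb Q}$ and $HC_m^{(m)}(R)=\Omega^m_{R/\bb Q}/d\Omega^{m-1}_{R/\bb Q}$, with $I\colon HH_m^{(m)}\to HC_m^{(m)}$ the natural projection \cite[\S4.6]{Loday1992}; applying this to $B/I$ and to $B/\frak M=(B/I)/(\frak M/I)$ and passing to the relative groups shows that $HC_{n-1}^{(n-1)}(B/I,\frak M/I)$ is generated by the classes of relative $(n-1)$-forms, i.e.\ of forms in $\ker(\Omega^{n-1}_{B/I}\to\Omega^{n-1}_{B/\frak M})$. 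Two elementary manipulations then finish the reduction: modulo exact forms the Leibniz rule lets one arrange that such a form is $\sum c_0\,da_1\wedge\cdots\wedge da_{n-1}$ with $c_0\in\frak M/I$; and, writing the semilocal Artinian ring $B/I$ as a finite product of local rings, every $a\in B/I$ satisfies $da=u\,d\log v$ for units $u,v$ (take $v=a$ where $a$ is a unit and $v=1+a$ elsewhere). Hence $HC_{n-1}^{(n-1)}(B/I,\frak M/I)$ is generated by the classes of $c_0\,d\log v_1\wedge\cdots\wedge d\log v_{n-1}$ with $c_0\in\frak M/I$ and $v_i\in(B/I)^{\times}$.

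The second step is to recognise these classes as symbols and lift them. Under the Goodwillie isomorphism the image of the product map $(1+\frak M/I)\otimes(B/I)^{\times}\otimes\cdots\otimes(B/I)^{\times}\to K_n(B/I,\frak M/I)$, which automatically lands in the weight-$n$ eigenspace since each factor is pure of weight $1$, sends $\{1+x\}\cdot\{v_1\}\cdots\{v_{n-1}\}$ to a nonzero rational multiple of the class of $\log(1+x)\,d\log v_1\wedge\cdots\wedge d\log v_{n-1}$; this is the standard description of Goodwillie's isomorphism on a product of a $1$-unit with ordinary units. As $\exp\colon\frak M/I\isoto 1+\frak M/I$ is bijective, given $c_0$ we may pick $x$ with $\log(1+x)=c_0$, and combining with the previous paragraph we get that $K_n^{(n)}(B/I,\frak M/I)$ is generated by the symbols $\{1+x,v_1,\dots,v_{n-1}\}$, $x\in\frak M/I$, $v_i\in(B/I)^{\times}$. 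Finally, because $I$ is contained in the Jacobson radical $\frak M$ of $B$, units of $B/I$ lift to units of $B$ and elements of $\frak M/I$ lift to elements of $\frak M$, so each such symbol is the image of $\{1+\tilde x,\tilde v_1,\dots,\tilde v_{n-1}\}\in K_n(B,\frak M)$, which gives the claim.

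I expect the genuine difficulty to be the middle step: showing that the symbols $\{1+x,v_1,\dots,v_{n-1}\}$ span, and not merely sit inside, the top eigenspace $K_n^{(n)}(B/I,\frak M/I)$. This forces one through cyclic homology — both the relative differential-form description of $HC_{n-1}^{(n-1)}$ and the explicit behaviour of Goodwillie's map on products — or, if one prefers, through the known structure of the relative $K$-theory of a nilpotent ideal in a $\bb Q$-algebra (for $n=2$ this is just the Dennis--Stein presentation of $K_2$ of a radical ideal, as used in \cite{Krishna2005}). An equivalent packaging, via the fibre sequence $K(B,I)\to K(B,\frak M)\to K(B/I,\frak M/I)$, is that the image in question equals the kernel of the boundary $\partial\colon K_n(B/I,\frak M/I)\to K_{n-1}(B,I)$, which by Adams-equivariance lands in $K_{n-1}^{(n)}(B,I)$; but proving that this composite vanishes seems to require the same input.
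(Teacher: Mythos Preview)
Your proposal is correct, but the paper takes a different and somewhat slicker route to the key claim that $K_n^{(n)}(B/I,\frak M/I)$ is generated by Steinberg symbols $\{1+x,v_1,\dots,v_{n-1}\}$ with $x\in\frak M/I$ and $v_i\in(B/I)^\times$. Rather than passing through Goodwillie's isomorphism and an explicit differential-form description of $HC_{n-1}^{(n-1)}$, the paper uses the splitting of $B/I\to B/\frak M$ to identify $K_n^{(n)}(B/I,\frak M/I)$ with $\ker\bigl(K_n^{(n)}(B/I)_{\bb Q}\to K_n^{(n)}(B/\frak M)_{\bb Q}\bigr)$, then invokes the Nesterenko--Suslin theorem $K_n^{(n)}(R)_{\bb Q}\cong K_n^M(R)_{\bb Q}$ for local rings with infinite residue field (applied factor-by-factor to the Artinian ring $B/I$). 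The generation by symbols $\{1+x,v_1,\dots,v_{n-1}\}$ is then the elementary fact about relative Milnor $K$-theory that $\ker(K_n^M(R)\to K_n^M(R/J))$ is generated by such symbols when $J$ lies in the Jacobson radical, together with divisibility of $1+\frak M/I$ to handle the passage to $\otimes\,\bb Q$. The final lifting step is the same as yours.

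What each approach buys: the paper's argument is shorter and needs only Nesterenko--Suslin plus a standard Milnor $K$-theory lemma; it avoids the explicit computation of the Goodwillie map on products of units, which in your approach is the step requiring the most care (your appeal to ``the standard description of Goodwillie's isomorphism on a product of a $1$-unit with ordinary units'' is correct but deserves a precise reference or a line of justification). On the other hand, your route stays entirely within the cyclic-homology framework already in play in the surrounding lemmas, and gives a more explicit picture of what the generators look like in $\Omega^{n-1}/d\Omega^{n-2}$, which is arguably more in the spirit of the rest of the section.
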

\begin{proof}
Notice that $B/I$ is a finite product of Artinian local rings of residue characteristic zero, so $B/I\to B/\frak M$ splits and therefore \[K_n^{(n)}(B/I,\frak M/I)=\ker(K_n^{(n)}(B/I)_{\bb Q}\to K_n^{(n)}(B/\frak M)_{\bb Q}).\]

Now we use the following classical Nesterenko-Suslin result \cite{Suslin1989}: if $R$ is a local ring with infinite residue fields, then $K_n^{(n)}(R)_{\bb Q}\cong K_n^M(R)_{\bb Q}$. Although $B/I$ and $B/\frak M$ are not local rings, they are products of local rings and so Nesterenko-Suslin's result clearly remains valid. In conclusion, \[K_n^{(n)}(B/I,\frak M/I)=\ker(K_n^M(B/I)\to K_n^M(B/\frak M))\otimes_{\bb Z}\bb Q.\]

Next there is the following standard result concerning Milnor $K$-theory: If $R$ is a ring and $J\subseteq R$ is an ideal contained inside its Jacobson radical, then the kernel of $K_n^M(R)\to K_n^M(R/J)$ is generated by Steinberg symbols of the form $\{a_1,a_2\dots,a_n\}$, where $a_1\in1+J$ and $a_2,\dots,a_n\in\mult A$. Indeed, if we let $\Lambda$ denote the subgroup of $K_n^M(R)$ generated by such elements, then it is enough to check that \[K_n^M(R/J)\to K_n^M(R)/\Lambda,\quad\{a_1,\dots,a_n\}\mapsto \{\tilde a_1,\dots,\tilde a_n\}\] is well-defined, where $\tilde a\in\mult R$ denotes an arbitrary lift of $a\in\mult{(R/J)}$.

Applying this with $R=B/I$ and $J=\frak M/I$, and noticing that $1+\frak M/I$ is a divisible subgroup of $\mult{(B/I)}$, we have proved that $K_n^{(n)}(B/I,\frak M/I)$ is generated by Steinberg symbols of the form $\xi=\{a_1,a_2,\dots,a_n\}$, where $a_1\in1+\frak M/I$ and $a_2,\dots,a_n\in \mult{(B/I)}$. Fix such an element $\xi$, and let $\tilde\xi=\{\tilde a_1,\dots,\tilde a_n\}$ be a Steinberg symbol in $K_n(B)$ obtained by taking arbitrary lifts of $a_1,\dots,a_n$ to $\mult B$; then $\tilde\xi\in\ker(K_n(B)\to K_n(B/\frak M)$. Since \[K_n(B,\frak M)\to\ker(K_n(B)\to K_n(B/\frak M))\] is surjective, we may further lift $\tilde\xi$ to $K_n(B,\frak M)$, and this proves that $K_n(B,\frak M)\to K_n(B/I,\frak M/I)$ covers $K_n^{(n)}(B/I,\frak M/I)$.
\end{proof}

\begin{corollary}
Let $B$ be a normal, one-dimensional, reduced semi-local ring containing $\bb Q$. Let $\frak M$ denote its Jacobson radical and let $I\subset B$ be an ideal with radical $\frak M$. Then \[K_n^{(n)}(B/I,\frak M/I)=\Im(K_n(B,\frak M)\To K_n(B/I,\frak M/I))=\Im(K_n(B/I^r,\frak M/I^r)\To K_n(B/I,\frak M/I))\] for any $r\ge 2$.
\end{corollary}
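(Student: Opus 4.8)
The plan is to sandwich $K_n^{(n)}(B/I,\frak M/I)$ between the two images appearing in the statement and then close the loop, so that everything collapses into a chain of equalities; essentially all the content is already in Lemma~\ref{lemma_top_degree_of_Adams} and Corollary~\ref{corollary_not_top_degree_of_Adams}, and only some bookkeeping with the Adams grading remains.

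First I would record the evident functoriality in the tower. For $r\ge 2$ the surjections $B\onto B/I^r\onto B/I^2\onto B/I$ send $\frak M$ onto $\frak M/I^r$, $\frak M/I^2$ and $\frak M/I$ respectively; hence on relative $K$-groups the map $K_n(B,\frak M)\to K_n(B/I,\frak M/I)$ factors through $K_n(B/I^r,\frak M/I^r)$, and the latter maps through $K_n(B/I^2,\frak M/I^2)$. Taking images in $K_n(B/I,\frak M/I)$ gives
\begin{align*}
\Im\big(K_n(B,\frak M)\to K_n(B/I,\frak M/I)\big)
&\sseq \Im\big(K_n(B/I^r,\frak M/I^r)\to K_n(B/I,\frak M/I)\big)\\
&\sseq \Im\big(K_n(B/I^2,\frak M/I^2)\to K_n(B/I,\frak M/I)\big).
\end{align*}
For the lower end of the chain, Lemma~\ref{lemma_top_degree_of_Adams} applied to the semi-local $\bb Q$-algebra $B$ gives $K_n^{(n)}(B/I,\frak M/I)\sseq\Im\big(K_n(B,\frak M)\to K_n(B/I,\frak M/I)\big)$.

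For the upper end I would check that $\Im\big(K_n(B/I^2,\frak M/I^2)\to K_n(B/I,\frak M/I)\big)\sseq K_n^{(n)}(B/I,\frak M/I)$. Since $I^2$ has radical $\frak M$, the ideal $\frak M/I^2$ is nilpotent in the $\bb Q$-algebra $B/I^2$, so $K_n(B/I^2,\frak M/I^2)$ is a $\bb Q$-vector space which, via Goodwillie's theorem (compatibly with the gradings), carries an Adams/$\lambda$-decomposition concentrated in weights $1\le i\le n$; likewise for $K_n(B/I,\frak M/I)$, and the transition map is homogeneous for these decompositions by naturality of the Adams operations. For $1\le i<n$ the weight-$i$ component $K_n^{(i)}(B/I^2,\frak M/I^2)\to K_n^{(i)}(B/I,\frak M/I)$ vanishes by Corollary~\ref{corollary_not_top_degree_of_Adams} --- this is the one place where normality and one-dimensionality of $B$ are used --- so the image can only land in the top summand $K_n^{(n)}(B/I,\frak M/I)$.

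Stringing these inclusions together yields
\begin{align*}
K_n^{(n)}(B/I,\frak M/I)
&\sseq\Im\big(K_n(B,\frak M)\to K_n(B/I,\frak M/I)\big)\\
&\sseq\Im\big(K_n(B/I^r,\frak M/I^r)\to K_n(B/I,\frak M/I)\big)\\
&\sseq\Im\big(K_n(B/I^2,\frak M/I^2)\to K_n(B/I,\frak M/I)\big)\sseq K_n^{(n)}(B/I,\frak M/I),
\end{align*}
so all of these groups coincide; this is precisely the asserted pair of equalities (and, as a byproduct, shows the image already stabilises at $r=1$). I do not anticipate a genuine obstacle here: the only point requiring care is that the Adams decompositions of these relative $K$-groups exist, are concentrated in the expected weight range, and are compatible with the maps in the tower --- but that is exactly the framework already in place for the two cited results.
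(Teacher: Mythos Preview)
Your proof is correct and is exactly the paper's approach: the paper's one-line argument ``Immediate from corollary~\ref{corollary_not_top_degree_of_Adams} and lemma~\ref{lemma_top_degree_of_Adams} (applied to both $B$ and $B/I^r$)'' unpacks to precisely the sandwich of inclusions you wrote down. One small slip in your closing parenthetical: the image does \emph{not} stabilise at $r=1$ --- there the map is the identity, with image all of $K_n(B/I,\frak M/I)$, not just the top Adams piece --- so stabilisation begins at $r=2$.
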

\begin{proof}
Immediate from corollary \ref{corollary_not_top_degree_of_Adams} and lemma \ref{lemma_top_degree_of_Adams} (applied to both $B$ and $B/I^r$).
\end{proof}

\begin{remark}\label{remark_surjectivity_reinterpretation}
In terms of the pro group language, the previous corollary implies that \[K_n(B,\frak M)\To\projlimf_rK_n(B/\frak M^r,\frak M/\frak M^r)\] is surjective.

The imminent proof of proposition \ref{proposition_ess_finite_type_case} will show that this is also true if $B$ is any one-dimensional, reduced local ring which is essentially of finite type over a field of characteristic zero.
\end{remark}

\begin{remark}
Our main results will follow once we take advantage of a long exact sequence involving relative $K$-groups, which we explain in this remark.

Suppose that $I\subseteq J\subseteq R$ are ideals in a ring, and that $R/I\to R/J$ splits. Then there is a long exact sequence of relative $K$-groups \[\cdots \to K_n(R,I)\to K_n(R/J)\to K_n(R/I,J/I)\to \cdots\tag{\dag},\] constructed as follows. Consider the long exact sequences for $R\to R/I$, $R\to R/J$, and $R/I\to R/J$:
\[\xymatrix{
\cdots\ar[r] & K_n(R,I)\ar[r]\ar[d] & K_n(R) \ar[r]\ar@{=}[d] & K_n(R/I)\ar[r]^{\bor}\ar[d] & \cdots\\
\cdots\ar[r] & K_n(R,J)\ar[r]\ar[d] & K_n(R) \ar[r]\ar[d] & K_n(R/J)\ar[r]\ar@{=}[d] & \cdots\\
\cdots\ar[r] & K_n(R/I,J/I)\ar[r]^\al & K_n(R/I) \ar[r] & K_n(R/J)\ar[r] & \cdots
}\]
Note that the lowest sequence breaks into short exact sequences since $R/I\to R/J$ has a section. From this it is easy to construct (\dag): the boundary map $K_n(R/I,J/I)\to K_{n-1}(R,I)$ is given by $\bor\circ\al$.
\end{remark}

In the case $n=2$, the following proposition is one of the main results of \cite{Krishna2005}:

\begin{proposition}\label{proposition_ess_finite_type_case}
Let $A$ be a one-dimensional, reduced local ring, essentially of finite type over a field of characteristic zero. Then the natural map \[K_n(A,\frak m)\to K_n(A/\frak m^r,\frak m/\frak m^r)\oplus  K_n(\tilde A,\frak M)\] is injective for $r\gg 0$.
\end{proposition}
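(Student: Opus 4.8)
The plan is to reduce the statement to a monomorphism assertion in $\op{Pro}Ab$, and then to obtain that from Krishna's birelative vanishing together with the surjectivity recorded in remark~\ref{remark_surjectivity_reinterpretation}. An important point is that $A$ is essentially of finite type over a field, so Krishna's theorems apply on the nose and no passage to Henselisations (of the sort needed for theorem~\ref{theorem_char_zero_local_result}) is required. First I would fix a conducting ideal $I\sseq A$ of $B:=\tilde A$; it exists because $A$ is reduced, excellent and one-dimensional, so $B$ is finite over $A$ and the conductor is $\frak m$-primary (hence also $\frak M$-primary in $B$). Since $A$ is one-dimensional, reduced and local, the systems $\{\frak m^r\}_r$, $\{I^r\}_r$ and the conducting ideals are mutually commensurable, so $\{A/\frak m^r\}_r\cong\{A/I^r\}_r$ in $\op{Pro}Ab$, and similarly for $B$ with $\frak M$ in place of $\frak m$. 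It therefore suffices to prove that
\[K_n(A,\frak m)\to\projlimf_r K_n(A/I^r,\frak m/I^r)\oplus K_n(B,\frak M)\]
is a monomorphism in $\op{Pro}Ab$. This really is equivalent to the proposition: for a morphism out of a \emph{constant} pro-object the kernel in $\op{Pro}Ab$ is the tower $\{\ker(K_n(A,\frak m)\to K_n(A/I^r,\frak m/I^r))\cap\ker(K_n(A,\frak m)\to K_n(B,\frak M))\}_r$, whose transition maps are injective, and such a tower vanishes in $\op{Pro}Ab$ exactly when one — equivalently every sufficiently large — of its terms is $0$.

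The core of the argument is a relative version of the sequence (MV). The factorisation $K(A)\to K(A/I^r)\to K(A/\frak m)$ gives a homotopy fibre sequence of spectra $K(A,I^r)\to K(A,\frak m)\to K(A/I^r,\frak m/I^r)$, and the factorisation $K(B)\to K(B/I^r)\to K(B/\frak M)$ gives $K(B,I^r)\to K(B,\frak M)\to K(B/I^r,\frak M/I^r)$ (using that $\frak M$ is the radical of $I$ in $B$). Applying $A\to B$, which carries $\frak m$ into $\frak M$ since $\frak m B\sseq\frak M$ and carries $I^r$ to $I^r$, one gets a commutative square of pro-spectra
\xysquare{K(A,\frak m)}{\projlimf_r K(A/I^r,\frak m/I^r)}{K(B,\frak M)}{\projlimf_r K(B/I^r,\frak M/I^r)}{->}{->}{->}{->}
whose horizontal homotopy fibres are $\projlimf_r K(A,I^r)$ and $\projlimf_r K(B,I^r)$; hence its total homotopy fibre is the birelative pro-spectrum $\projlimf_r K(A,B,I^r)$, whose pro-homotopy groups vanish by Krishna's theorem, recalled above in the form $\projlimf_r K_j(A,B,I^r)=0$ (valid since $A$ is reduced and essentially of finite type over a field and $B$ is smooth). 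So the square is pro-homotopy-cartesian, and the associated long exact sequence of pro-homotopy groups is the relative Mayer--Vietoris sequence
\[\cdots\to K_n(A,\frak m)\to\projlimf_r K_n(A/I^r,\frak m/I^r)\oplus K_n(B,\frak M)\to\projlimf_r K_n(B/I^r,\frak M/I^r)\to K_{n-1}(A,\frak m)\to\cdots\]
in $\op{Pro}Ab$. It is essential here that the relative direction on the normalisation side is taken with respect to the Jacobson radical $\frak M$ of $B$ rather than $\frak m B$, since $B/\frak M$ — a product of residue-field extensions of $A/\frak m$ — is in general strictly larger than $A/\frak m$.

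Next I would feed in the surjectivity of remark~\ref{remark_surjectivity_reinterpretation}. Because $B=\tilde A$ is a normal, one-dimensional, reduced semi-local ring containing $\bb Q$, that remark gives that $K_n(B,\frak M)\to\projlimf_r K_n(B/\frak M^r,\frak M/\frak M^r)=\projlimf_r K_n(B/I^r,\frak M/I^r)$ is an epimorphism in $\op{Pro}Ab$ for every $n\ge 0$. The composite of the summand inclusion of $K_n(B,\frak M)$ with the second map of the relative Mayer--Vietoris sequence equals $\pm$ this epimorphism, so that second map is itself epic at every spot; hence every connecting map $\projlimf_r K_n(B/I^r,\frak M/I^r)\to K_{n-1}(A,\frak m)$ vanishes and the sequence breaks into short exact sequences
\[0\to K_n(A,\frak m)\to\projlimf_r K_n(A/I^r,\frak m/I^r)\oplus K_n(B,\frak M)\to\projlimf_r K_n(B/I^r,\frak M/I^r)\to 0.\]
Injectivity of the left-hand map is, by the first paragraph, precisely the proposition. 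A short diagram chase on this short exact sequence — using that the $K_n(B,\frak M)$-summand alone already surjects onto the right-hand group — shows moreover that $K_n(A,\frak m)\to\projlimf_r K_n(A/\frak m^r,\frak m/\frak m^r)$ is surjective, which is the extension of remark~\ref{remark_surjectivity_reinterpretation} to arbitrary reduced local rings essentially of finite type over a field of characteristic zero.

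I expect the only genuine work to lie in the second paragraph: arranging the homotopy fibre sequences so that the total fibre of the relevant square of pro-spectra is visibly the birelative term to which Krishna's vanishing applies, while keeping the $\op{Pro}Ab$ bookkeeping honest (formal limits of exact sequences are exact; a morphism out of a constant pro-object is mono, resp.\ epi, exactly when it is injective, resp.\ surjective, for $r\gg 0$) and consistently using the Jacobson radical of $\tilde A$ rather than $\frak m\tilde A$. Everything that is genuinely deep — the birelative vanishing via cyclic homology and Corti\~nas' theorem, and the contrasting behaviour of the top and lower Adams-degree parts in the lemmas above — is already in hand.
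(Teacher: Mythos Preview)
Your proof is correct and follows essentially the same approach as the paper: both arguments compare the relative long exact sequences for $I^r\subseteq\frak m\subseteq A$ and $I^r\subseteq\frak M\subseteq B$, use Krishna's vanishing $\projlimf_r K_*(A,B,I^r)=0$ to identify the fibres $\projlimf_r K_*(A,I^r)\cong\projlimf_r K_*(B,I^r)$, and then invoke the surjectivity of remark~\ref{remark_surjectivity_reinterpretation} to break the resulting sequence into short exact sequences. The only difference is packaging---you phrase the comparison as a total-fibre computation for a square of pro-spectra, whereas the paper writes out the two long exact sequences from the preceding remark and chases the ladder diagram directly---but the logical content is identical, including the observation that the argument also yields the surjectivity of $K_n(A,\frak m)\to\projlimf_r K_n(A/\frak m^r,\frak m/\frak m^r)$.
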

\begin{proof}
We apply the previous remark to $I\subseteq\frak m\subseteq A$ and $I\subseteq\frak M\subseteq B=\tilde A$ and compare the resulting long exact sequences:
\[\xymatrix{
\cdots \ar[r] & K_{n+1}(A/I,\frak m/I) \ar[r]\ar[d]^{(2)} & K_n(A,I) \ar[r]\ar[d] & K_n(A,\frak m) \ar[r]\ar[d] & K_n(A/I,\frak m/I) \ar[r]\ar[d] & \cdots\\
\cdots \ar[r] & K_{n+1}(B/I,\frak M/I) \ar[r] & K_n(B,I) \ar[r] & K_n(B,\frak M) \ar[r]^{(1)} & K_n(B/I,\frak M/I) \ar[r] & \cdots
}\]
We take the formal limit over all conducting ideals and make two observations: firstly, remark \ref{remark_surjectivity_reinterpretation} tells us that arrow (1) become surjective in the limit; and secondly, arrow (2) becomes an isomorphism in the limit thanks to our $\op{Pro}Ab$ interpretation of Krishna's result on double relative $K$-groups (explained after remark \ref{remark_pro_cats}):
\[\xymatrix@C=5mm{
\cdots\ar[r] & \projlimf_IK_{n+1}(A/I,\frak m/I) \ar[r]\ar[d] & \projlimf_IK_n(A,I) \ar[r]\ar[d]^\cong & K_n(A,\frak m) \ar[r]\ar[d] & \projlimf_IK_n(A/I,\frak m/I)\ar[d]\ar[r]&\cdots\\
\cdots\ar@{->>}[r] & \projlimf_IK_{n+1}(B/I,\frak M/I) \ar[r] & \projlimf_IK_n(B,I) \ar[r] & K_n(B,\frak M) \ar@{->>}[r] & \projlimf_IK_n(B/I,\frak M/I)\ar[r]&\cdots
}\]
A simple diagram chase shows that $K_n(A,\frak m)\to\projlimf_I K_n(A/I,\frak m/I)$ is also surjective. So the top and bottom rows break into short exact sequences
\[\xymatrix{
0 \ar[r] & \projlimf_IK_n(A,I) \ar[r]\ar[d]^\cong & K_n(A,\frak m) \ar[r]\ar[d] & \projlimf_IK_n(A/I,\frak m/I) \ar[r]\ar[d] & 0\\
0 \ar[r] & \projlimf_IK_n(B,I) \ar[r] & K_n(B,\frak M) \ar[r] & \projlimf_IK_n(B/I,\frak M/I) \ar[r] & 0
}\]
whence the right square in this diagram is bicartesian.

Hence $K_n(A,\frak m)\to K_n(B,\frak M)\oplus K_n(A/I,\frak m/I)$ is injective for all sufficiently small conducting ideals $I$, which completes the proof since any power of $\frak m$ contains a conducting ideal.
\end{proof}

The proposition leads to an interesting refinement of the surjectivity discussed in remark \ref{remark_surjectivity_reinterpretation} away from the top degree part of the Adams decomposition:

\begin{corollary}\label{corollary_explicit decomposition}
With notation as in the proposition, the natural map \[K_n^{(i)}(A,\frak m)_{\bb Q}\to\projlim_{r\ge 0}K_n^{(i)}(A/\frak m^r,\frak m/\frak m^r)\oplus K_n^{(i)}(\tilde A,\frak M)_{\bb Q}\] is an isomorphism, for $0\le i<n$.
\end{corollary}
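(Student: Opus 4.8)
The plan is to run the proof of Proposition~\ref{proposition_ess_finite_type_case} one Adams eigenspace at a time. Fix $i$ with $1\le i<n$ (the case $i=0$, where all three groups are zero for $n\ge1$, is trivial). The first step is to observe that every arrow occurring in the proof of Proposition~\ref{proposition_ess_finite_type_case} --- the localisation and Mayer--Vietoris sequences attached to the conducting ideals, the relative sequence (\dag), the Goodwillie comparison underlying Krishna's results, and the splitting of $R/I\to R/\frak M$ --- is compatible with the $\lambda$-operations, so that after $-\otimes\bb Q$ it respects the Adams/weight decomposition; for the Goodwillie map this is \cite{Cathelineau1990}, and the rest is standard. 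Since forming the weight-$i$ summand (after $\otimes\bb Q$) is an exact functor, the bicartesian square in $\op{Pro}Ab$ obtained at the end of the proof of Proposition~\ref{proposition_ess_finite_type_case}, with corners $K_n(A,\frak m)$, $\projlimf_IK_n(A/I,\frak m/I)$, $K_n(\tilde A,\frak M)$ and $\projlimf_IK_n(\tilde A/I,\frak M/I)$, stays bicartesian after passing to weight-$i$ parts (here one uses that the relative groups modulo a conducting ideal are already $\bb Q$-vector spaces, by \cite[1.5]{Weibel1982}).

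Next I would feed in Corollary~\ref{corollary_not_top_degree_of_Adams}: applied with $I$ replaced by $I^r$ --- permissible since $\op{rad}(I^r)=\frak M$ --- it shows that every transition map $K_n^{(i)}(\tilde A/I^{2r},\frak M/I^{2r})\to K_n^{(i)}(\tilde A/I^r,\frak M/I^r)$ is zero. Hence the pro-abelian group $\projlimf_rK_n^{(i)}(\tilde A/I^r,\frak M/I^r)$ is pro-zero, and by the commensurability of the ideal systems $\{\frak m^r\}$, $\{I^r\}$ and $\{J:J\text{ a conducting ideal}\}$ so is $\projlimf_IK_n^{(i)}(\tilde A/I,\frak M/I)$. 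Thus the weight-$i$ version of the bicartesian square has zero bottom-right corner, and a bicartesian square with a zero corner exhibits the opposite corner as the direct sum of the other two; so the two structure maps assemble into an isomorphism in $\op{Pro}Ab$
\[K_n^{(i)}(A,\frak m)_{\bb Q}\isoto \projlimf_IK_n^{(i)}(A/I,\frak m/I)\oplus K_n^{(i)}(\tilde A,\frak M)_{\bb Q},\]
whose first component is reduction modulo the conducting ideals and whose second is induced by $A\hookrightarrow\tilde A$. Applying the realisation functor $\projlim\colon\op{Pro}Ab\to Ab$ --- which is additive, fixes the constant objects, and carries $\projlimf_IK_n^{(i)}(A/I,\frak m/I)$ to $\projlim_rK_n^{(i)}(A/\frak m^r,\frak m/\frak m^r)$ by cofinality of $\{\frak m^r\}$ among conducting ideals --- turns this into precisely the isomorphism of the statement.

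The only genuine obstacle is the bookkeeping claimed in the first step: one must be sure that the Adams decomposition really is available and exact on all the objects involved --- in particular on the non-nilpotent relative groups $K_n(A,\frak m)_{\bb Q}$ and $K_n(\tilde A,\frak M)_{\bb Q}$ (and on their formal limits over conducting ideals) --- and that all connecting and transition homomorphisms are weight-homogeneous. Granting this, which is implicit in the machinery used in \cite{Krishna2005,Krishna2010} and the references cited there, the corollary is a purely formal consequence of Proposition~\ref{proposition_ess_finite_type_case} and Corollary~\ref{corollary_not_top_degree_of_Adams}, with no new geometric or homological input required.
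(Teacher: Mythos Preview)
Your proposal is correct and follows essentially the same route as the paper: both arguments take the weight-$i$ piece of the bicartesian square from the proof of Proposition~\ref{proposition_ess_finite_type_case}, invoke Corollary~\ref{corollary_not_top_degree_of_Adams} to annihilate the $\projlimf_I K_n^{(i)}(\tilde A/I,\frak M/I)$ corner, and then apply the realisation functor. The paper compresses your first paragraph into the single phrase ``repeating the previous proof with $K_n$ replaced by $K_n^{(i)}\otimes\bb Q$'', taking for granted exactly the Adams-compatibility bookkeeping you spell out.
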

\begin{proof}
According to corollary \ref{corollary_not_top_degree_of_Adams}, \[\projlimf_I K_n^{(i)}(B/I,\frak M/I)=0,\] where the formal projective limit is taken over the set of all conducting ideals. Repeating the previous proof with $K_n$ replaced by $K_n^{(i)}\otimes\bb Q$, we arrive at a commutative diagram where the top row is short exact:
\[\xymatrix{
0 \ar[r] & \projlimf_IK_n^{(i)}(A,I)_{\bb Q} \ar[r]\ar[d]^\cong & K_n^{(i)}(A,\frak m)_{\bb Q} \ar[r]\ar[d] & \projlimf_IK_n^{(i)}(A/I,\frak m/I) \ar[r]\ar[d] & 0\\
0 \ar[r] & \projlimf_IK_n^{(i)}(B,I)_{\bb Q} \ar[r]^\cong & K_n^{(i)}(B,\frak M)_{\bb Q} \ar[r] & 0 \ar[r] & 0
}\]
It follows that once that \[K_n^{(i)}(A,\frak m)_{\bb Q}\to\projlimf_IK_n^{(i)}(A/I,\frak m/I)\oplus K_n^{(i)}(B,\frak M)_{\bb Q}\] is an isomorphism, from which the stated claim follows by taking the realisation functor $\projlim$ and replacing $\projlim_I$ by $\projlim_{\frak m^r}$.
\end{proof}

The proposition also allows us to prove a special case of the main theorem (from which the main theorem itself will easily follow):

\begin{theorem}\label{theorem_hens_of_ess_finite_type_case}
Let $A$ be a one-dimensional, reduced local ring, essentially of finite type over a field of characteristic zero. Then \[K_n(A^h)\to \hat K_n(A^h)\oplus K_n(\tilde{A^h})\] is injective.
\end{theorem}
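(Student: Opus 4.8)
The plan is to deduce this from Proposition~\ref{proposition_ess_finite_type_case} by writing $A^h$ as a filtered colimit of pointed \'etale neighbourhoods of $(A,\frak m)$ and passing to the limit, working throughout with relative $K$-groups so as to keep the residue fields out of the way. Write $A^h=\varinjlim_B B$, where $B$ runs over the filtered system of local rings of pointed \'etale neighbourhoods of $(A,\frak m)$; each such $B$ is again one-dimensional, reduced, local and essentially of finite type over the base field $k$, so Proposition~\ref{proposition_ess_finite_type_case} applies to it. The decisive elementary point is that an \'etale neighbourhood does not alter the infinitesimal thickenings: one has $\frak m_B=\frak m B$, and, because $A/\frak m^r$ is Henselian, $B/\frak m_B^r\cong A/\frak m^r$ canonically for every $r$, with the transition maps $B\to B'$ inducing the identity on $A/\frak m^r$. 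Moreover $\tilde B=\tilde A\otimes_A B$, since normalisation commutes with \'etale base change (here one uses that $A$, being essentially of finite type over a field, is excellent, so that $\tilde A$ is finite over $A$ and $A\to A^h$ is regular); hence $\tilde{A^h}=\tilde A\otimes_A A^h=\varinjlim_B\tilde B$. In particular $\hat K_n(A^h)=\hat K_n(A)$, and the inverse system $\{K_n(A/\frak m^r,\frak m/\frak m^r)\}_r$, whose limit I denote $\hat K_n^{\op{rel}}(A^h)$, is ``constant'' along the system of neighbourhoods.

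First I would carry out the reduction to relative $K$-theory. Since $k$ is perfect and $k\sseq A\sseq A^h$, the finite residue extension $k'=A/\frak m$ of $k$ is separable, so Hensel's lemma inside the Henselian ring $A^h$ provides a coefficient field $k'\into A^h$; the same reasoning inside each of the finitely many Henselian local (indeed, discrete valuation) factors of $\tilde{A^h}$ shows that $\tilde{A^h}\To\tilde{A^h}/\frak M^h$ admits a section. Thus the relevant localisation sequences split, giving $K_n(A^h)=K_n(A^h,\frak m^h)\oplus K_n(k')$, $K_n(\tilde{A^h})=K_n(\tilde{A^h},\frak M^h)\oplus K_n(\tilde{A^h}/\frak M^h)$, and (taking $\projlim_r$ of the split sequences $0\to K_n(A/\frak m^r,\frak m/\frak m^r)\to K_n(A/\frak m^r)\to K_n(k')\to 0$) an inclusion $\hat K_n^{\op{rel}}(A^h)\into\hat K_n(A^h)$. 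A short diagram chase with these decompositions then reduces the theorem to the injectivity of
\[K_n(A^h,\frak m^h)\To\hat K_n^{\op{rel}}(A^h)\oplus K_n(\tilde{A^h},\frak M^h),\]
the leftover $K_n(k')$-summand being carried identically onto a direct factor of $\hat K_n(A^h)$.

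Next comes the colimit argument. As $K$-theory, and hence relative $K$-theory, commutes with filtered colimits of rings, one has $K_n(A^h,\frak m^h)=\varinjlim_B K_n(B,\frak m_B)$ and $K_n(\tilde{A^h},\frak M^h)=\varinjlim_B K_n(\tilde B,\frak M_{\tilde B})$, while $\hat K_n^{\op{rel}}(A^h)=\projlim_r K_n(B/\frak m_B^r,\frak m_B/\frak m_B^r)$ for any single $B$. Now let $\xi\in K_n(A^h,\frak m^h)$ lie in the kernel of the displayed map, and lift it to some $\xi_B\in K_n(B,\frak m_B)$. Since $\xi$ vanishes in $\varinjlim_{B'}K_n(\tilde B',\frak M_{\tilde B'})$, there is $B'\ge B$ in the system such that the image $\xi_{B'}$ of $\xi_B$ in $K_n(B',\frak m_{B'})$ dies already in $K_n(\tilde B',\frak M_{\tilde B'})$. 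Proposition~\ref{proposition_ess_finite_type_case}, applied to $B'$, yields an integer $r_0$ (depending on $B'$ and $n$) for which $K_n(B',\frak m_{B'})\to K_n(B'/\frak m_{B'}^{r_0},\frak m_{B'}/\frak m_{B'}^{r_0})\oplus K_n(\tilde B',\frak M_{\tilde B'})$ is injective. But the image of $\xi_{B'}$ in $K_n(B'/\frak m_{B'}^{r_0},\frak m_{B'}/\frak m_{B'}^{r_0})=K_n(A/\frak m^{r_0},\frak m/\frak m^{r_0})$ is the image of $\xi$ there, which is $0$ because $\xi$ dies in $\hat K_n^{\op{rel}}(A^h)$. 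Hence $\xi_{B'}=0$, so $\xi=0$, and the map is injective.

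The step I expect to be most delicate is the tacit interchange of limits in the last paragraph: the ``$r\gg0$'' threshold furnished by Proposition~\ref{proposition_ess_finite_type_case} a priori depends on the \'etale neighbourhood $B'$, so one cannot simply fix $r$ and take a colimit over $B$. The argument circumvents this by exploiting the rigidity noted in the first paragraph, namely that the Artinian quotients $B/\frak m_B^r=A/\frak m^r$ do not vary with $B$: one first moves far enough along the system of neighbourhoods to annihilate the $\tilde{A^h}$-component, and only afterwards invokes the neighbourhood-dependent threshold. By comparison, the reduction to relative $K$-theory and the splitting off of residue fields are routine, resting only on the characteristic-zero hypothesis; I anticipate nothing there beyond bookkeeping.
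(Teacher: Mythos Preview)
Your proposal is correct and follows essentially the same route as the paper: reduce to relative $K$-groups, apply Proposition~\ref{proposition_ess_finite_type_case} to each \'etale neighbourhood $B$ of $(A,\frak m)$, use that $B/\frak m_B^r=A/\frak m^r$ is independent of $B$, and pass to the filtered colimit. The only cosmetic difference is that the paper absorbs the $B$-dependent threshold $r_0$ by mapping into $\hat K_n(A)$ (which already dominates every $K_n(A/\frak m^r)$) \emph{before} taking the colimit, so that one is simply taking a filtered colimit of injections---this replaces your element-by-element chase, but the underlying idea is identical.
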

\begin{proof}
Let $\frak m$ denote the maximal ideal of $A$ and $k=A/\frak m$ its residue field. Let $A'$ be a finite, local \'etale extension of $A$ with residue field $k$; then $A'$ satisfies the conditions of proposition \ref{proposition_ess_finite_type_case}, and has maximal ideal $\frak mA'$ and normalisation $\tilde{A'}=A'\otimes_A\tilde A$, so \[K_n(A',\frak mA')\to K_n(A'/\frak m^rA',\frak mA'/\frak m^rA')\oplus K_n(A'\otimes_A\tilde A,\frak M')\] is injective for $r\gg0$, where $\frak M'$ is the Jacobson radical of $\tilde{A'}$. But $A'/\frak m^rA'=A/\frak m^r$ and $\frak mA'/\frak m^rA'=\frak m/\frak m^r$, so the central term is $K_n(A/\frak m^n,\frak m/\frak m^n)$; moreover, the quotient map $A/\frak m^r\to k$ is split, so $K_n(A/\frak m^r,\frak m/\frak m^r)\to K_n(A/\frak m^r)$ is (split) injective. In conclusion, \[K_n(A',\frak mA')\to\hat K_n(A)\oplus K_n(A'\otimes_A\tilde A,\frak M')\] is injective.

Since $A^h=\indlim A'$, with $A'$ running over all finite, local \'etale extensions of $A$ with residue field $k$, we may pass to the limit to deduce that \[K_n(A^h,\frak mA^h)\to\hat K_n(A)\oplus K_n(\tilde{A^h},\frak M^h)\tag{\dag}\] is injective, where we write $\frak M^h$ for the Jacobson radical of $\tilde{A^h}$. 

Next notice that $\tilde{A^h}$ is a Henselian, one-dimensional, reduced, normal local ring; hence it is a finite product of Henselian discrete valuation rings, and therefore the quotient map $\tilde{A^h}\to\tilde{A^h}/\frak M^h$ actually splits. So $K_*(\tilde{A^h},\frak M^h)\to K_*(\tilde{A^h})$ is (split) injective and thus we arrive at a commutative diagram with exact rows:
\[\xymatrix{
0\ar[r] & K_n(A^h,\frak m A^h)\ar[r]\ar[d] & K_n(A^h)\ar[r]\ar[d] & K_n(k)\ar[d] \ar[r] & 0 \\
0\ar[r] & K_n(\tilde{A^h},\frak M^h)\ar[r] & K_n(\tilde{A^h})\ar[r] & K_n(\tilde{A^h}/\frak M^h) \ar[r] & 0
}\]
A quick diagram chase using this commutative diagram and the injectivity of (\dag) reveals that if $\xi\in K_n(A^h)$ dies in both $\hat K_n (A)$ (whence it dies in $K_n(k)$) and $K_n(\tilde{A^h})$, then $\xi=0$. This completes the proof.
\end{proof}

The aim of this section, namely the proof of theorem \ref{theorem_char_zero_local_result}, easily follows from the previous theorem using standard manipulations:

\begin{proof}[Proof of theorem \ref{theorem_char_zero_local_result}]
Let $A$ be a one-dimensional, reduced, excellent, Henselian local ring containing $\bb Q$. Let $k$ be the residue field of $A$. After picking a coefficient field for $\hat A$ and generators for its maximal ideal, it is clear how to construct a one-dimensional, local subring $A_\circ\subseteq \hat A$ which is essentially of finite-type over $k$ and which satisfies $\hat{A_\circ}=\hat A$. The ring $A_\circ$ is reduced because $\hat A$ is (since $A$ is excellent).

According to corollary \ref{corollary_equivalence_of_completed_and_Henselian} and the subsequent comments, conjecture $1'$ is equivalent for the rings $A,\hat A=\hat{A_\circ}$, and $A_\circ^h$. The previous theorem proves the conjecture for $A_\circ^h$, and this completes the proof.
\end{proof}

We finish this section by noting another application of the type of surjectivity which appeared in remark \ref{remark_surjectivity_reinterpretation}:

\begin{theorem}\label{theorem_Ktop_is_Khat}
Let $A$ be a one-dimensional, reduced, excellent, local ring containing $\bb Q$. Then the natural map \[K_n^\sub{top}(A)\to\hat K_n(A)\] is an isomorphism (see remark \ref{remark_homotopy_approach}).
\end{theorem}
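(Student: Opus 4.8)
The plan is to reduce the theorem to the vanishing of the $\projlim^1$ term in the Milnor-type exact sequence $0\to{\projlim_r}^1 K_{n+1}(A/\frak m^r)\to K_n^\sub{top}(A)\to\hat K_n(A)\to 0$ of remark \ref{remark_homotopy_approach}; an isomorphism is exactly the statement that this $\projlim^1$ group is zero. First I would observe that the tower $\{A/\frak m^r\}_r$, and hence the tower of abelian groups $\{K_{n+1}(A/\frak m^r)\}_r$ with its transition maps, depends on $A$ only through $\hat A$, and in fact that $A/\frak m^r\cong A_\circ/\frak m_\circ^r$ compatibly in $r$, where $A_\circ\sseq\hat A$ is the one-dimensional, reduced, local subring, essentially of finite type over $k:=A/\frak m$ and with $\hat{A_\circ}=\hat A$, produced from $\hat A$ in the proof of theorem \ref{theorem_char_zero_local_result} (this uses that $A$ is excellent, so $\hat A$ is reduced, and that $k$ has characteristic zero). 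Thus it suffices to show ${\projlim_r}^1 K_{n+1}(A/\frak m^r)=0$ when $A$ is one-dimensional, reduced, local, and essentially of finite type over a field $k$ of characteristic zero.

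Next I would split off the residue field. Each $A/\frak m^r$ is an Artinian local $\bb Q$-algebra, hence admits a coefficient field, so $A/\frak m^r\onto k$ is split; therefore the relative group $R_r:=K_{n+1}(A/\frak m^r,\frak m/\frak m^r)$ injects into $K_{n+1}(A/\frak m^r)$ and there is a short exact sequence of towers \[0\to\{R_r\}_r\to\{K_{n+1}(A/\frak m^r)\}_r\to\{K_{n+1}(k)\}_r\to 0,\] in which the right-hand tower is constant. Since ${\projlim_r}^1$ of a constant tower vanishes, the $\projlim^1$ six-term exact sequence reduces the problem to proving ${\projlim_r}^1 R_r=0$.

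For that I would invoke the surjectivity established in the course of proving proposition \ref{proposition_ess_finite_type_case} (see remark \ref{remark_surjectivity_reinterpretation}): for $A$ as above, the natural map $K_{n+1}(A,\frak m)\to\projlimf_r R_r$ is an epimorphism in $\op{Pro}Ab$. An epimorphism onto a tower from a constant object $N$ means exactly that $\Im(R_s\to R_r)\sseq\Im(N\to R_r)$ for $s\gg r$; consequently the subtower $\{\Im(N\to R_r)\}_r$ has surjective transition maps, and the quotient tower $\{R_r/\Im(N\to R_r)\}_r$ is pro-zero, so both have vanishing $\projlim$ and $\projlim^1$, and the six-term sequence forces ${\projlim_r}^1 R_r=0$. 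Feeding this back through the reductions above gives the theorem.

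The argument is entirely formal once the surjectivity input is granted; the only step needing a little care is the elementary fact about $\op{Pro}Ab$ used in the last paragraph (an epimorphism from a constant object kills $\projlim^1$), which I would either prove directly as above or extract from the standard references on pro-categories cited in remark \ref{remark_pro_cats}. The genuine difficulty therefore does not reside in this proof but in the surjectivity statement it depends on — namely Krishna's Artin--Rees theorems combined with the top-degree analysis of the Adams decomposition carried out in lemma \ref{lemma_top_degree_of_Adams} and its corollaries.
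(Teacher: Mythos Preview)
Your proposal is correct and follows essentially the same route as the paper: reduce to the essentially-of-finite-type case via $\hat A$ and $A_\circ$, split off the constant tower $K_{n+1}(k)$, and use the $\op{Pro}Ab$-surjectivity of $K_{n+1}(A,\frak m)\to\projlimf_r K_{n+1}(A/\frak m^r,\frak m/\frak m^r)$ from the proof of proposition~\ref{proposition_ess_finite_type_case} to kill $\projlim^1$. The only cosmetic difference is in the final step: the paper applies the derived functors of $\projlim$ to the short exact sequence $0\to\projlimf_r\kappa_r\to K_{n+1}(A,\frak m)\to\projlimf_r R_r\to 0$ and invokes $\projlim^2=0$ for countable towers together with $\projlim^1$ of a constant system being zero, whereas you unpack the epimorphism by hand into an image subtower with surjective transition maps and a pro-zero quotient tower --- both arguments are standard and equivalent.
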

\begin{proof}
Since $K_n^\sub{top}(A)$ and $\hat K_n(A)$ depends only on the quotients $A/\frak m^r$, for $r\ge 1$, we may argue as in the proof of theorem \ref{theorem_char_zero_local_result} above and replace $A$ first by its completion and then by a smaller subring; this reduces the problem to the case when $A$ contains a coefficient field $k$ over which it is essentially of finite type. In the remainder of the proof we work with such a ring $A$.

We noted during the proof of proposition \ref{proposition_ess_finite_type_case} that \[K_n(A,\frak m)\to \projlimf_r K_n(A/\frak m^r,\frak m/\frak m^r)\tag{\dag}\] is surjective in $\op{Pro}Ab$. Applying the realisation functor $\projlim:\op{Pro}Ab\to Ab$ yields a short exact sequence \[\dots \to{\projlim_r}^1K_n(A,\frak m)\to {\projlim_r}^1 K_n(A/\frak m^r,\frak m/\frak m^r)\to{{\projlim_r}^2\kappa_r}\to\dots,\] where $\kappa_r=\ker(K_n(A,\frak m)\to K_n(A/\frak m^r,\frak m/\frak m^r))$. But it is well-known that $\projlim^2$ of a countable system of abelian groups automatically vanishes, and $\projlim^1$ of a constant system of groups certainly vanishes, so this shows that \[{\projlim_r}^1 K_n(A/\frak m^r,\frak m/\frak m^r)=0.\]

Applying the same type of argument to the exact sequence \[0\to \projlimf_rK_n(A/\frak m^r,\frak m/\frak m^r)\to\projlimf_r K_n(A/\frak m^r)\to K_n(k)\to 0\] shows that $\projlim_r^1 K_n(A/\frak m^r)=0$, whence the result follows.
\end{proof}

\subsection{Proof of theorem \ref{theorem_char_zero_truncated_polynomials}}
We would like to extend theorem \ref{theorem_char_zero_local_result} to the non-reduced case by taking advantage of the Goodwillie isomorphism and then studying the relative cyclic homology of nilpotent ideals. Unfortunately, at present we can only handle truncated polynomial rings.

\begin{lemma}
Let $A$ be a one-dimensional, reduced local ring, essentially of finite type over a field of characteristic zero, and let $C=A[t]/\pid{t^e}$. Then \[\tilde{HC}_n(C)\to\tilde{HC}_n(C/\frak m_C^r)\oplus\tilde{HC}_n(\Frac C)\] is injective for $r\gg 0$, where the cyclic homologies are taken with respect to any fixed subfield $k\subseteq A$.
\end{lemma}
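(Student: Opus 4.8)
The plan is to mimic the structure of the proof of Theorem \ref{theorem_char_zero_local_result}/Proposition \ref{proposition_ess_finite_type_case}, but on the level of cyclic homology rather than $K$-theory, and with the nilpotent coordinate $t$ carried along. Write $B=\tilde A$ for the normalisation of $A$, so $B$ is a smooth semi-local $k$-algebra, and let $I\subseteq A$ be a conducting ideal; then $D:=B[t]/\pid{t^e}$ is a finite $C$-algebra, $IC\subseteq C$ is a conducting-type ideal (its radical is $\frak m_C$), and $\Frac C=\Frac A[t]/\pid{t^e}$, $\Frac D=\Frac B[t]/\pid{t^e}$. Since $\Frac A\to\Frac B$ is a product of field extensions and $t$ is nilpotent, I first reduce, exactly as in Section \ref{section_main_proofs}, to showing that $\tilde{HC}_n(C)\to\tilde{HC}_n(C/\frak m_C^r)\oplus\tilde{HC}_n(D)$ is injective for $r\gg0$ (using that $\tilde{HC}_n(D)\to\tilde{HC}_n(\Frac D)$ is injective because $\Frac D$ is a truncated-polynomial ring over a smooth $k$-algebra, whose cyclic homology — via the HKR/Loday decomposition and the known formulae for $HC_*(R[t]/\pid{t^e})$, e.g.~\cite{Loday1992} — is computed by forms and contains $\tilde{HC}_n(D)$ as the "reduced in $t$" summand).

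Next I would produce the cyclic-homology analogue of the double-relative vanishing. The key input is that for the conducting ideal $I$, Krishna's result gives $\projlimf_{r}HC_n^\ell(A,B,I^r)=0$ in $\op{Pro}Ab$. I claim the same holds after the base change $-\otimes_k k[t]/\pid{t^e}$, i.e.~$\projlimf_r HC_n^\ell(C,D,I^rC)=0$. This should follow from the Eilenberg--Zilber/Künneth decomposition for cyclic homology of a tensor product together with the fact that $HC_*(k[t]/\pid{t^e})$ and $HH_*(k[t]/\pid{t^e})$ are finitely generated $k$-modules in each degree: the double-relative group $HC_n^\ell(C,D,I^rC)$ is then a finite direct sum of terms built from $HC_p^\ell(A,B,I^r)$ (and Hochschild variants, via the SBI sequence — which breaks into short exact sequences in the double-relative setting and respects the relevant filtrations) tensored with the fixed finite-dimensional groups coming from $t$; each such $\projlimf_r$ summand vanishes because the $A,B,I^r$ factor does. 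Granting this, the usual splicing in $\op{Pro}Ab$ yields a long exact Mayer--Vietoris sequence $\cdots\to HC_n^\ell(C)\to\projlimf_r HC_n^\ell(C/I^rC)\oplus HC_n^\ell(D)\to\projlimf_r HC_n^\ell(D/I^rD)\to\cdots$, and passing to relative groups (splitting off $HC_*^\ell$ of the residue rings, which is legitimate since $C\to C/\frak m_C$ and $D\to D/\frak M_D$ split) it suffices to prove that $\projlimf_r \tilde{HC}_n(D/\frak M_D^r)$ is hit, i.e.~that $\tilde{HC}_n(D,\frak M_D)\to\projlimf_r HC_n(D/\frak M_D^r,\cdot)$ is surjective in $\op{Pro}Ab$, exactly parallel to Remark \ref{remark_surjectivity_reinterpretation}.

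That surjectivity is where the real work sits, and I expect it to be the main obstacle. Here $D/\frak M_D^r$ is a product of rings of the form $K[t,s]/(t^e, s^{?})$-type Artinian $K$-algebras (more precisely $B_{\frak n}/I^rB_{\frak n}\otimes_k k[t]/\pid{t^e}$, which after the coordinate identification $B_{\frak n}/I^r B_{\frak n}\cong K[s]/\pid{s^{er'}}$ becomes $K[s,t]/(s^N,t^e)$ for suitable $N=N(r)\to\infty$). So the claim amounts to: the natural map from reduced cyclic homology of $K[s,t]/(s^{2N},t^e)$ to that of $K[s,t]/(s^N,t^e)$ has image controlled in the limit by the "global" ring $K[t]/\pid{t^e}$ — i.e.~the pro-system $\{\tilde{HC}_n(K[s,t]/(s^N,t^e))\}_N$ is pro-isomorphic to a constant system (up to the bounded $s$-degree-$0$ part). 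I would attack this by the same Hodge/Adams splitting strategy as Lemma \ref{lemma_not_top_degree_of_Hodge_for_truncated_polys} and Lemma \ref{lemma_top_degree_of_Adams}: for the part of the Hodge decomposition below top degree, show the transition maps $\tilde{HC}_n^{(<n)}(K[s,t]/(s^{2N},t^e))\to\tilde{HC}_n^{(<n)}(K[s,t]/(s^N,t^e))$ are eventually zero by a filtration argument in the $s$-adic direction (reducing to the one-variable statement \cite[Lem.~2.2]{Krishna2005} by a spectral sequence / Künneth-in-$t$ argument, since $t$ contributes only top-degree Hochschild classes); for the top Hodge piece $HC_n^{(n)}$, use that it is computed by Milnor/absolute Kähler differentials $\Omega^n$ and that every such form on $K[s,t]/(s^N,t^e)$ is the restriction of a form on $K[t]/\pid{t^e}$ pulled back along a global symbol, exactly as in Lemma \ref{lemma_top_degree_of_Adams}. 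Combining the two pieces gives the desired pro-surjectivity, and then the diagram chase finishes the lemma. The technical hazard is that the $s$-direction now has itself a non-trivial truncation interacting with the $t$-direction, so the Hodge bookkeeping is genuinely two-variable; I would isolate this in an auxiliary lemma computing (or at least bounding) $\tilde{HC}_*^{(i)}(K[s,t]/(s^N,t^e))$ via the Eilenberg--Zilber decomposition $\tilde{HH}_*(K[s]/\pid{s^N}\otimes_K K[t]/\pid{t^e})=\bigoplus\tilde{HH}_p(K[s]/\pid{s^N})\otimes_K \tilde{HH}_q(K[t]/\pid{t^e})\oplus(\text{mixed terms})$ together with \cite[Prop.~4.5.14]{Loday1992} for the Hodge compatibility, reducing everything to the already-known one-variable truncated-polynomial computations.
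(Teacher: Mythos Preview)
Your approach is genuinely different from the paper's, and considerably more complicated. The paper does not mimic the $K$-theory argument of Proposition \ref{proposition_ess_finite_type_case} at all; instead it exploits the fact that $C=A\otimes_k R$ with $R=k[t]/\pid{t^e}$ a \emph{graded} $k$-algebra with $R_0=k$, and applies the Kassel--Goodwillie decomposition (\cite{Kassel1987}, \cite{Goodwillie1985}, as combined in \cite{Weibel1989}) to obtain
\[\tilde{HC}_n(R\otimes_k A)\cong\bigoplus_{p+q=n}\tilde{HC}_p(R)\otimes_k HH_q(A),\]
or at least a natural filtration with these graded pieces. The same formula applies with $A$ replaced by $A/\frak m^r$ or by $\Frac A$, and since the $\tilde{HC}_p(R)$ factors are identical in all three, the injectivity statement reduces immediately to the injectivity of $HH_q(A)\to HH_q(A/\frak m^r)\oplus HH_q(\Frac A)$ for each $q\le n$. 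This is precisely Krishna's Artin--Rees result \cite[Corol.~6.2(iii)]{Krishna2010} together with the observation that $HH_q(\tilde A)=\Omega^q_{\tilde A/k}\hookrightarrow\Omega^q_{\Frac A/k}=HH_q(\Frac A)$. No pro-categories, no double-relative groups, no two-variable Hodge analysis.

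Your route has a genuine gap at the surjectivity step. You write that the transition maps $\tilde{HC}_n^{(<n)}(K[s,t]/(s^{2N},t^e))\to\tilde{HC}_n^{(<n)}(K[s,t]/(s^N,t^e))$ should eventually vanish ``since $t$ contributes only top-degree Hochschild classes'', but this is false: $k[t]/\pid{t^e}$ is not smooth over $k$, and for $q\ge 2$ its Hochschild homology $HH_q(k[t]/\pid{t^e})$ is nonzero and lies in Hodge degree strictly less than $q$ (for instance $HH_2^{(2)}=\Omega^2=0$, so $HH_2=HH_2^{(1)}\neq 0$). Consequently the reduction to the one-variable statement in Lemma \ref{lemma_not_top_degree_of_Hodge_for_truncated_polys} does not go through as you describe: that lemma's proof uses crucially that the second tensor factor $K$ satisfies $HH_q(K)=HH_q^{(q)}(K)$, which fails for $K'[t]/\pid{t^e}$. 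You correctly flag this as the ``technical hazard'', but do not resolve it, and the Kassel--Goodwillie decomposition is exactly the device that circumvents it by separating the $t$-variable from $A$ \emph{before} any Hodge or pro-system analysis begins.
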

\begin{proof}
For a moment let $R=R_0\oplus R_1\oplus\cdots$ be a graded $k$-algebra, and $A$ an arbitrary $k$-algebra. Combining C.~Kassel's formula \cite{Kassel1987} for the cyclic homology of a graded algebra with Goodwillie's result \cite{Goodwillie1985} that the reduced SBI sequence splits into short exact sequences, one can deduce that \[\tilde{HC}_n(R\otimes_kA)\cong\bigoplus_{p+q=n}\tilde{HC}_p(R)\otimes_k HH_q(A)\] For details, see \cite{Weibel1989}, where C.~Weibel, L.~Reid, and S.~Geller also point out that this decomposition is not natural, but depends on choices of splitting of the short exact sequences of $k$-modules \[0\to \tilde{HC}_{n-1}(R)\to\tilde{HH}_n(R)\to\tilde{HC}_n(R)\to 0.\] Without these choices of splitting, one has only a decreasing filtration on $\tilde{HC}_n(R\otimes_kA)$ for which $\op{gr}^p\tilde{HC}_n(R\otimes_kA)\cong\tilde{HC}_p(R)\otimes_k HH_{n-p}(A)$.

Put $R=k[t]/\pid{t^e}$ and let $A$ be as in the statement of the proposition. Then according to one of Krishna's Artin-Rees results \cite[Corol.~6.2(iii)]{Krishna2010}, $HH_q(A)\to HH_q(A/\frak m^r)\oplus HH_q(\tilde A)$ is injective for $r\gg 0$. Since $\tilde A$ is smooth over $k$, we have $HH_q(\tilde A)=\Omega_{\tilde A/k}^q$, which embeds into $HH_q(\Frac A)=\Omega_{\Frac A/k}^q=\Omega_{\tilde A/k}^q\otimes_{\tilde A}\Frac A$.

So, let $r$ be large enough so that $HH_q(A)\to HH_q(A/\frak m^r)\oplus HH_q(\Frac A)$ is injective for $q=0,\dots,n$. Applying the Kassel-Goodwillie decomposition to the rings \[C=A[t]/\pid{t^e},\quad\Frac C=(\Frac A)[t]/\pid{t^e},\quad C/\frak m_A^rC=(A/\frak m_A^r)[t]/\pid{t^e},\] we deduce that \[\tilde{HC}_n(C)\to \tilde{HC}_n(C/\frak m_A^rC)\oplus \tilde{HC}_n(\Frac C)\] is injective for $r\gg 0$. To obtain the exact statement of the proposition, just notice that $\frak m_A^rC\subseteq\frak m_C^{r+e}$ for all $r\ge 1$.
\end{proof}

Theorem \ref{theorem_char_zero_truncated_polynomials} easily follows:

\begin{proof}[Proof of theorem \ref{theorem_char_zero_truncated_polynomials}]
Exactly as in the proof of theorem \ref{theorem_char_zero_local_result} we may reduce to the case when $A$ is the Henselization of a one-dimensional, reduced local ring which is essentially of finite type over a field of characteristic $0$.

For any ring $R$ containing $\bb Q$ the Goodwillie isomorphism implies there is a split exact sequence \[0\to \tilde{HC}_{n-1}(R[t]/\pid{t^e})\to K_n(R[t]/\pid{t^e})\to K_n(R)\to 0\] The proof is completed by realising $A$ as a filtered direct limit of rings $A'$ to which the previous lemma applies (just as in the proof of theorem \ref{theorem_hens_of_ess_finite_type_case}), passing to the limit, and of course using theorem \ref{theorem_char_zero_local_result}.
\end{proof}

\begin{remark}
More generally, the proof presented above shows that conjecture 1' is true whenever $A=A_0\otimes_{\bb Q}R$, where $A_0$ is a one-dimensional, reduced, excellent, Henselian local rings containing $\bb Q$, and $R$ is a graded Artinian $\bb Q$-algebra with $R_0=\bb Q$.
\end{remark}

\section{Some examples of and miscellaneous results concerning completed $K$-groups}\label{section_example}
In this section we give some examples of and structural results on $\hat K_n(A)$ and $\hat K_n(\Frac A)$ in two important cases: rings with finite residue field, and $\bb Q$-algebras.

\subsubsection*{Finite residue field}
We begin by focussing on the case when $A$ has a finite residue field. We first show $\hat K_n(A)$ is a profinite group, then offer two homotopy-theoretic interpretations of it, and then explicitly consider the case of a complete discrete valuation ring.

\begin{proposition}\label{proposition_thanks_to_Vigleik}
Let $A$ be a one-dimensional, Noetherian local ring, with finite residue field. Then $K_n(A/\frak m^r)$ is finite for all $n,r\ge 1$, and so $\hat K_n(A)$ is a profinite group.
\end{proposition}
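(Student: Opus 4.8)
The plan is to prove finiteness of $K_n(A/\frak m^r)$ by reducing to the case of a finite ring and then invoking known finiteness of the $K$-theory of finite rings together with a dévissage/filtration argument along the nilpotent ideal $\frak m/\frak m^r$. The key observation is that $A/\frak m^r$ is itself an Artinian local ring whose residue field $k = A/\frak m$ is finite; if in addition $A/\frak m^r$ were finite as a set, the result would be immediate from Kuku's theorem that $K_n(R)$ is finite for any finite ring $R$ and all $n \geq 1$. So the first step is to dispose of the case where $A/\frak m^r$ is finite, and the second step is to handle the possibility that $\frak m^i/\frak m^{i+1}$ is infinite-dimensional over $k$ (which can happen, e.g. for non-excellent $A$, or simply when $A$ is large).

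First I would recall that $K_n$ commutes with filtered colimits of rings, and that $A/\frak m^r$ is the filtered colimit of its finite subrings containing $1$ — but this does not immediately help, since the transition maps need not be nice. A cleaner route: filter $A/\frak m^r$ by the ideals $\frak m^i/\frak m^r$ for $i = 1, \dots, r$, giving a finite chain of surjections $A/\frak m^r \onto A/\frak m^{r-1} \onto \cdots \onto A/\frak m = k$ with nilpotent (indeed square-zero, after refining the filtration) kernels at each stage. For each stage one has a relative $K$-group $K_n(A/\frak m^{i+1}, \frak m^i/\frak m^{i+1})$ fitting into a long exact sequence, so by induction on $i$ it suffices to show (a) $K_n(k)$ is finite, which is classical since $k$ is a finite field (Quillen's computation of $K_n(\bb F_q)$), and (b) each relative term $K_n(A/\frak m^{i+1}, \frak m^i/\frak m^{i+1})$ is finite, where the ideal is square-zero. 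For (b) I would note that a square-zero extension $R' \onto R$ with kernel $M$ an $R$-module (here $R = A/\frak m^i$, $M = \frak m^i/\frak m^{i+1}$, a finite-dimensional $k$-vector space once we know $\dim_k \frak m^i/\frak m^{i+1} < \infty$): the relative $K$-theory of such an extension is built functorially out of $M$, and in fact when $R$ is a finite ring and $M$ is a finite module the relative $K$-groups are finite — this again follows from Kuku's theorem applied to the finite ring $R'$ together with the long exact sequence, provided $R'$ is finite, which it is once all the graded pieces $\frak m^j/\frak m^{j+1}$ ($j < r$) are finite.

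The main obstacle, therefore, is establishing that $A/\frak m^r$ is a \emph{finite} ring, equivalently that each $\dim_k \frak m^i/\frak m^{i+1}$ is finite. This is where the Noetherian hypothesis is essential: since $A$ is Noetherian, $\frak m$ is finitely generated, so $\frak m^i/\frak m^{i+1}$ is a finitely generated module over the Noetherian ring $k = A/\frak m$, i.e. a finite-dimensional $k$-vector space; as $k$ is finite, $\frak m^i/\frak m^{i+1}$ is finite, and hence $A/\frak m^r$ is finite by induction on $r$. Once this is in hand, the cleanest finish is simply: $A/\frak m^r$ is a finite ring, so $K_n(A/\frak m^r)$ is a finite group for all $n \geq 1$ by Kuku's theorem, and the filtration argument above is not even needed — though I would keep the relative-$K$-group remark as an alternative. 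Finally, $\hat K_n(A) = \projlim_r K_n(A/\frak m^r)$ is then a projective limit of finite groups, hence profinite by definition, and the $\projlim^1$ term vanishes automatically (Mittag-Leffler for finite groups), consistent with remark \ref{remark_homotopy_approach}(ii). I expect the whole argument to be short; the only subtlety worth flagging explicitly is the invocation of finiteness of $K$-theory of finite rings, for which a precise citation (Kuku) should be given.
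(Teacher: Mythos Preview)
Your proposal is correct and ends up at essentially the same reduction as the paper: once you observe that $A/\frak m^r$ is a finite ring (because $A$ is Noetherian with finite residue field), the claim follows from the finiteness of the higher $K$-groups of finite rings. The d\'evissage detour through square-zero extensions is unnecessary, as you yourself note.

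The only real difference is in how the key finiteness fact is handled. You invoke it as a black box (Kuku's theorem), whereas the paper treats it as folklore and supplies a short direct argument: for a finite ring $R$, Bass stability gives $H_n(GL(R),\bb Z)=H_n(GL_m(R),\bb Z)$ for $m\gg 0$, which is finite since $GL_m(R)$ is a finite group; then, since $BGL(R)^+$ is an infinite loop space (so $\pi_1$ acts trivially on $\pi_n$), Serre's mod-$\cal C$ theory converts finiteness of the integral homology into finiteness of the homotopy groups $K_n(R)$. Citing Kuku is perfectly legitimate, but the paper's approach has the virtue of being self-contained and of explaining \emph{why} the result holds.
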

\begin{proof}
Indeed, it seems to be a folklore result that the $K$-groups of a finite ring $R$ are themselves finite; I am grateful to V.~Angeltveit for explaining the argument to me. Firstly, Bass stability implies that, for any fixed $n$, $H_n(BGL(R)^+,\bb Z)=H_n(GL(R),\bb Z)=H_n(GL_m(R),\bb Z)$ for $m$ sufficiently large, and $H_n(GL_m(R),\bb Z)$ is finite for $n\ge 1$ since $GL_m(R)$ is a finite group. Thus all the integral homology groups of degree $\ge 1$ of the $K$-theory space $BGL(R)^+$ are finite.

Since $BGL(R)^+$ is an infinite loop space, its $\pi_1$ acts trivially on its $\pi_n$ for all $n\ge 1$, so the theory of Serre classes tells us that \[\pi_n(BGL(R)^+)\mbox{ is finite for all }n\ge 1\Longleftrightarrow H_n(BGL(R)^+,\bb Z)\mbox{ is finite for all }n\ge 1,\] completing the proof.
\end{proof}

The proposition has some important homotopy-theoretic consequences, continuing the theme of remark \ref{remark_homotopy_approach}:

\begin{corollary}\label{corollary_homotopy_description}
Let $A$ be a one-dimensional, Noetherian local ring with finite residue field. Then $K^\sub{top}_n(A)\to K_n(A)$ is an isomorphism for each $n\ge 0$.

If moreover $A$ has characteristic zero and is complete, then \[K_n(A)\cong \pi_n(K(A)^\comp)\] for $n>0$, where $K(A)^\comp$ denotes the profinite completion of the $K$-theory spectrum of $A$.
\end{corollary}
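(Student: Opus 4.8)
The first assertion is a soft homotopy-theoretic statement. Proposition~\ref{proposition_thanks_to_Vigleik} says that each $K_{n+1}(A/\frak m^r)$ is finite, so the tower $\{K_{n+1}(A/\frak m^r)\}_r$ automatically satisfies the Mittag--Leffler condition and ${\projlim_r}^1 K_{n+1}(A/\frak m^r)=0$; inserting this into the short exact sequence of Remark~\ref{remark_homotopy_approach} collapses it to an isomorphism $K_n^\sub{top}(A)\isoto\hat K_n(A)$. That $\hat K_n(A)$ is profinite is then automatic, being the limit of the finite groups $K_n(A/\frak m^r)$. I expect no difficulty here.

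For the second assertion, assume in addition that $A$ is complete of characteristic zero; as $A/\frak m$ is finite this forces $A$ to be $\frak m$-adically complete of mixed characteristic $(0,p)$ for some prime $p$. By Proposition~\ref{proposition_thanks_to_Vigleik} each $K(A/\frak m^r)$ has finite homotopy groups and is therefore already its own profinite completion; hence the canonical maps $K(A)\to K(A/\frak m^r)$ factor through $K(A)^\comp$ and assemble into a natural map
\[\theta\colon K(A)^\comp\To K^\sub{top}(A)=\holim_r K(A/\frak m^r),\]
whose target is again profinitely complete, being a homotopy limit of profinitely complete spectra. The plan is to prove that $\theta$ induces an isomorphism on $\pi_n$ for every $n>0$; combined with the first assertion this gives $\pi_n(K(A)^\comp)\cong\pi_n(K^\sub{top}(A))\cong\hat K_n(A)$ for $n>0$, which is what is wanted. (One cannot hope for $n=0$: there $\theta$ replaces $\bb Z$ by $\hat{\bb Z}$, which is why the statement is restricted to $n>0$.)

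To prove $\theta$ is an isomorphism in positive degrees, note that its source and target are connective and profinitely complete, so it suffices to check that $\theta$ is an $\ell$-adic equivalence for every prime $\ell$. For $\ell\ne p$ this is Gabber--Suslin rigidity: $A$ and every Artinian quotient $A/\frak m^r$ are Henselian local rings with the finite residue field $A/\frak m$, in which $\ell$ is invertible, so there are natural equivalences $K(A;\bb Z/\ell^k)\simeq K(A/\frak m;\bb Z/\ell^k)\simeq K(A/\frak m^r;\bb Z/\ell^k)$ for all $r$ and $k$; taking the homotopy limit over $r$ shows $\theta$ is an equivalence mod $\ell$. For $\ell=p$ the required statement is $K(A;\bb Z/p)\simeq\holim_r K(A/\frak m^r;\bb Z/p)$, i.e.\ the continuity of $p$-adically completed $K$-theory for the $\frak m$-adically (hence $p$-adically) complete Noetherian ring $A$; this is a known continuity result, provable from the corresponding statement for topological cyclic homology together with a pro-version of the Dundas--Goodwillie--McCarthy theorem relating relative $K$-theory and relative $\mathrm{TC}$. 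Assembling these equivalences over all primes $\ell$ yields that $\theta$ is an equivalence in positive degrees.

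The vanishing of $\projlim^1$ and the prime-to-$p$ rigidity argument are formal; the one substantial ingredient is the $\ell=p$ case, namely $p$-adic continuity of algebraic $K$-theory for a complete, one-dimensional, possibly singular local ring. This is exactly what the mixed-characteristic hypothesis buys us: the analogous continuity statement is false in equal characteristic $p$, so that approach does not describe $\hat K_n$ in that case.
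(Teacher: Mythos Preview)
Your argument is correct and follows the same overall strategy as the paper: for the first assertion both of you use Proposition~\ref{proposition_thanks_to_Vigleik} to kill the $\projlim^1$ term, and for the second both of you prove that $K(A)^\comp\to K^\sub{top}(A)$ is an equivalence in positive degrees. The difference is one of packaging: the paper simply observes that $A$ is a finite $\bb Z_p$-algebra and cites Suslin \cite{Suslin1984a,Suslin1986} together with the appendix of Hesselholt--Madsen \cite{Hesselholt1997} for the equivalence $K(A)^\comp\simeq K^\sub{top}(A)^\comp$, then notes that $K^\sub{top}(A)$ is already profinitely complete in positive degrees. You instead unpack this black box prime by prime, invoking Gabber--Suslin rigidity for $\ell\neq p$ and $p$-adic continuity for $\ell=p$. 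Your justification of the $p$-adic case via topological cyclic homology and a pro-Dundas--Goodwillie--McCarthy theorem is a more modern route than what the Hesselholt--Madsen appendix actually uses (which relies on older results of Suslin and Panin on $K$-theory with finite coefficients), but it arrives at the same conclusion and is entirely valid.
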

\begin{proof}
The first claim was already explained in remark \ref{remark_homotopy_approach}.

For the second claim, Cohen structure theory implies that $A$ is a finite $\bb Z_p$-algebra, and so \cite{Suslin1984a, Suslin1986} implies that the natural map $K(A)\to K^\sub{top}(A)$ induces a weak equivalence $K(A)^\comp\stackrel{\sim}{\to} K^\sub{top}(A)^\comp$ (the argument can be found in the appendix of \cite{Hesselholt1997}). But profinite completion commutes with homotopy limits, and so \[K^\sub{top}(A)^\comp=\holim_r\left( K(A/\frak m^r)^\comp\right)\stackrel{(\ast)}{=}\holim_r K(A/\frak m^r),\] where the final equality follows from the previous lemma: $K(A/\frak m^r)$ has finite higher homotopy groups, hence is its own profinite completion, at least if we ignore $\pi_0$ (and thus ($\ast$) is actually only an equality if we restrict to a connected component of each side). Hence $\pi_n(K(A)^\comp)=\pi_n(K^\sub{top}(A))$ for $n>0$, whence the first claim now completes the proof.
\end{proof}

Next we consider an important example, namely rings of integers of local fields:

\begin{proposition}\label{proposition_K2_of_a_cdvr}
Let $\roi$ be a complete discrete valuation ring of mixed characteristic, with finite residue field of characteristic $p$. Let $\mu$ be the group of roots of unity inside $\roi$, and $\mu_{p^\infty}$ those of $p$-power order. Then the Hilbert symbol induces isomorphisms \[\hat K_2(\roi)\isoto\mu_{p^\infty},\quad \hat K_2(\Frac\roi)\isoto\mu.\]
\end{proposition}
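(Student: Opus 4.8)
I would organise the proof around Moore's theorem on the Hilbert symbol of $F=\Frac\roi$ (a finite extension of $\bb Q_p$, with finite residue field $k=\roi/\frak m$ of characteristic $p$), the $K_2$-localisation sequence of $\roi$, the computation of $K_2$ of the finite truncations $\roi/\frak m^r$, and the pushout defining $\hat K_2(\Frac\roi)$. By Moore's theorem the Hilbert (norm-residue) symbol is an epimorphism $h_F\colon K_2(F)\onto\mu$ whose kernel is the maximal divisible subgroup $U\sseq K_2(F)$; since Tate proved that the torsion subgroup of $K_2$ of a local field is exactly $\mu$, the group $U$ is torsion-free, hence uniquely divisible, and $K_2(F)\cong U\oplus\mu$ with $h_F$ the projection onto $\mu$. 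The localisation sequence of $\roi$, together with $K_2(k)=0$ (Quillen) and the injectivity of $K_1(\roi)=\roi^\times\into K_1(F)=F^\times$, gives a short exact sequence $0\to K_2(\roi)\to K_2(F)\xto{\bor}k^\times\to 0$, where $\bor$ is the tame symbol; as $k^\times$ is finite, $\bor$ annihilates $U$ and factors through an epimorphism $\bar\bor\colon\mu\onto k^\times$. Since $\bar\bor$ kills $\mu_{p^\infty}$ on order grounds while $\mu/\mu_{p^\infty}$ and $k^\times$ have the same order $\abs{k}-1$ (the prime-to-$p$ roots of unity of $F$ reduce isomorphically onto $k^\times$, so $\mu\cong\mu_{p^\infty}\oplus k^\times$), the kernel of $\bar\bor$ is exactly $\mu_{p^\infty}$. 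Hence $K_2(\roi)=\ker\bor$ fits in a split extension $0\to U\to K_2(\roi)\to\mu_{p^\infty}\to 0$, so $K_2(\roi)\cong U\oplus\mu_{p^\infty}$, and $h_F$ carries $K_2(\roi)$ onto $\mu_{p^\infty}$ with kernel $U$, while the cokernel of $K_2(\roi)\into K_2(F)$ is $k^\times$.

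Next I would identify $\hat K_2(\roi)=\projlim_rK_2(\roi/\frak m^r)$. Each map $K_2(\roi)\to K_2(\roi/\frak m^r)$ is surjective, because $K_2$ of the local ring $\roi/\frak m^r$ is generated by (Dennis--Stein) symbols and these lift along $\roi\onto\roi/\frak m^r$. Writing $N_r=\ker(K_2(\roi)\to K_2(\roi/\frak m^r))$, each $N_r$ contains $U$ (a divisible subgroup of the finite group $K_2(\roi/\frak m^r)$ is zero) and $N_r/U$ is a subgroup of the finite group $K_2(\roi)/U\cong\mu_{p^\infty}$; hence the descending chain $(N_r)$ stabilises, ${\projlim_r}^1N_r=0$, and $K_2(\roi)\onto\hat K_2(\roi)$. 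In particular $\hat K_2(\roi)$ is a quotient of $\mu_{p^\infty}$. The remaining point is that this quotient is the whole of $\mu_{p^\infty}$, i.e.\ that $N_r=U$ for $r\gg 0$, equivalently that the wild part of the Hilbert symbol is faithfully detected on the finite ring $\roi/\frak m^r$ for $r$ large; this I would take from Dennis and Stein's explicit computation of $K_2$ of the truncations, which stabilises precisely to $\mu_{p^\infty}(\roi)$ with the Hilbert symbol as the isomorphism. This gives $\hat K_2(\roi)\isoto\mu_{p^\infty}$.

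For $\hat K_2(\Frac\roi)$ I would then use that, $\roi$ being already complete, $\hat K_2(\Frac\roi)$ is by definition the pushout of $K_2(\roi)\into K_2(F)$ along the surjection $K_2(\roi)\onto\hat K_2(\roi)$. Because the first map is injective with cokernel $k^\times$, the pushout fits into a short exact sequence $0\to\hat K_2(\roi)\to\hat K_2(\Frac\roi)\to k^\times\to 0$, which splits since $\hat K_2(\roi)\cong\mu_{p^\infty}$ is a $p$-group and $\abs{k^\times}$ is prime to $p$. Therefore $\hat K_2(\Frac\roi)\cong\mu_{p^\infty}\oplus k^\times\cong\mu$; and the maps $h_F\colon K_2(F)\to\mu$ and $\hat K_2(\roi)\isoto\mu_{p^\infty}\sseq\mu$ agree on $K_2(\roi)$, hence induce, via the pushout, a homomorphism $\hat K_2(\Frac\roi)\to\mu$ which restricts to an isomorphism on the subgroup $\hat K_2(\roi)\cong\mu_{p^\infty}$ and induces the isomorphism $k^\times\isoto\mu/\mu_{p^\infty}$ on the quotient, hence is itself the asserted Hilbert-symbol isomorphism $\hat K_2(\Frac\roi)\isoto\mu$.

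The one step that is not formal manipulation of the localisation and pushout sequences together with Moore's theorem is the identification of $\hat K_2(\roi)$ with the \emph{full} group $\mu_{p^\infty}$, rather than merely a quotient of it: equivalently, the statement that the wild part of the Hilbert symbol already appears on a single finite quotient $\roi/\frak m^r$. I expect this to be the crux, and would handle it by citing the computation of $K_2$ of finite chain rings due to Dennis and Stein (alternatively, via the non-degeneracy of the wild Hilbert pairing on units from local class field theory). Everything else — the splitting $K_2(F)\cong U\oplus\mu$, the identification $\mu\cong\mu_{p^\infty}\oplus k^\times$, and the bookkeeping with the pushout — is routine.
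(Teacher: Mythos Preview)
Your overall architecture---Moore's theorem to split $K_2(F)\cong U\oplus\mu$, the localisation sequence to identify $K_2(\roi)\cong U\oplus\mu_{p^\infty}$, surjectivity of $K_2(\roi)\to K_2(\roi/\frak m^r)$ via symbols, and the pushout for $\hat K_2(\Frac\roi)$---matches the paper's proof closely. The one place you diverge is exactly the step you flag as the crux, and here the paper's argument is more direct than the black-box citation you propose.

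Rather than invoking an explicit computation of $K_2(\roi/\frak m^r)$ (which, in the form you state it, may not be available in the literature for arbitrary mixed-characteristic $\roi$), the paper shows that the Hilbert symbol itself factors through $K_2(\roi/\frak m^r)$ for $r\gg 0$. The ingredients are: (a) by Dennis--Stein, the kernel of $K_2(\roi)\to K_2(\roi/\frak m^r)$ is generated by Steinberg symbols $\{u,1+a\}$ with $u\in\roi^\times$ and $a\in\frak m^r$; and (b) for $r$ large enough one has $1+\frak m^r\subseteq(\roi^\times)^m$ where $m=\#\mu$, so each such symbol lies in $mK_2(\roi)$ and is therefore annihilated by the Hilbert symbol (whose target has exponent $m$). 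This gives a surjection $K_2(\roi/\frak m^r)\onto\mu_{p^\infty}$; combined with your observation that $K_2(\roi/\frak m^r)$ is already a quotient of $\mu_{p^\infty}$ (the divisible part $U$ dies in the finite quotient), one gets the isomorphism for each $r\gg 0$, hence for the limit. This is essentially your ``non-degeneracy of the wild Hilbert pairing'' alternative made precise, and it avoids relying on a stabilisation statement for $K_2$ of finite chain rings that you would otherwise have to locate or prove.
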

\begin{proof}
Let $F=\Frac\roi$. Moore's theorem \cite{Moore1968} says that the Hilbert symbol $H:K_2(F)\to\mu$ is an isomorphism, that its kernel $\Lambda$ is an uncountable divisible group (even uniquely divisible, by \cite{Merkurjev1983}) contained inside $K_2(\roi)$, and that $K_2(\roi)/\Lambda\isoto\mu_{p^\infty}$. Set $m=\#\mu$.

According to \cite{Dennis1973a}, the kernel of $K_2(\roi)\to K_2(\roi/\frak m^r)$ is generated by Steinberg symbols of the form $\{u,1+a\}$, where $u\in\mult \roi$ and $a\in\frak m^r$. Consequently, if we pick $r$ large enough so that $1+\frak m^r\subseteq(\mult \roi)^m$, then the Hilbert symbol factors through $K_2(\roi/\frak m^r)$. Moreover, $K_2$ of a local ring is entirely symbolic (again by \cite{Dennis1973a}), so $K_2(\roi/\frak m^r)$ is finite (which we saw in the previous proposition anyway) and $K_2(\roi)\to K_2(\roi/\frak m^r)$ is surjective. This proves that the Hilbert symbol induces an isomorphism \[H:K_2(\roi/\frak m^r)\isoto\mu_{p^\infty}\] for all $r\gg0$. In conclusion, $\hat K_2(\roi)\cong\mu_{p^\infty}$.

The isomorphism for $\hat K_2(\Frac \roi)$ follows from its original definition as a pushout and the exact sequence $0\to K_2(\roi)\to K_2(F)\to \mu/\mu_{p^\infty}\to 0$.
\end{proof}

\begin{remark}
If $\roi$ is as in the previous proposition, then J.~Wagoner \cite{Wagoner1976} showed that $\hat K_n(\roi)\cong K_n(\roi/\frak m)\oplus V_n$, where $V_n$ is a \[\begin{cases}\mbox{finite $\bb Z_p$-module} & \mbox{if $n$ is even,}\\\mbox{finitely generated $\bb Z_p$-module of rank }|F : \bb Q_p| & \mbox{if $n$ is odd.}\end{cases}\]
\end{remark}

We finish this finite residue field section by showing that $\hat K_2$ is the $p$-adic completion of $K_2$:

\begin{proposition}
Let $A$ be a one-dimensional, Noetherian local ring of characteristic zero, with finite residue field of characteristic $p$. Then \[\hat K_2(A)\cong\projlim_{r\ge 1} K_2(A)/p^rK_2(A)\]
\end{proposition}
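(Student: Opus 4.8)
The plan is to compare, on $K_2(A)$, the $p$-adic filtration $\{p^rK_2(A)\}_{r\ge1}$ with the filtration by the kernels $K_r:=\ker(K_2(A)\to K_2(A/\frak m^r))$, and to show that these two filtrations induce the same completion. Begin with some generalities which use only that $A/\frak m$ is finite of characteristic $p$: each $A/\frak m^r$ is then a finite local ring whose additive group is a $p$-group, so each power of its maximal ideal is a finite abelian $p$-group. Since $K_2$ of a finite field vanishes and the relative group $K_2(R,J)$ of a square-zero ideal $J$ is killed by any integer that kills $J$ (the Dennis--Stein symbols $\langle a,b\rangle$ with $a\in J$ being additive in $a$ when $J^2=0$; \cite{Dennis1973a}), a d\'evissage along $A/\frak m^r\supseteq\frak m/\frak m^r\supseteq\cdots\supseteq\frak m^{r-1}/\frak m^r\supseteq0$ shows that $K_2(A/\frak m^r)$ is a finite $p$-group, in fact killed by $p^{r-1}$ (finiteness is proposition~\ref{proposition_thanks_to_Vigleik}). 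Hence $\hat K_2(A)=\projlim_rK_2(A/\frak m^r)$ is a profinite abelian $p$-group; for any such group $G$ one has $\bigcap_rp^rG=0$, each $p^rG$ is closed, $G$ is compact and surjects onto each $G/p^rG$, whence $G\isoto\projlim_rG/p^rG$. Finally, $K_2$ of a local ring is generated by symbols \cite{Dennis1973a}, and symbols lift along $A\onto A/\frak m^r$, so $K_2(A)\onto K_2(A/\frak m^r)$ for all $r$; thus $\hat K_2(A)=\projlim_rK_2(A)/K_r$, the completion of $K_2(A)$ for the filtration $\{K_r\}$.

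It follows that the proposition asserts exactly that the pro-objects $\projlimf_rK_2(A)/p^rK_2(A)$ and $\projlimf_rK_2(A)/K_r$ are isomorphic in $\op{Pro}Ab$, after which one applies the realisation functor $\projlim$. Since both are towers of quotients of $K_2(A)$, this amounts to mutual cofinality of the two filtrations: (i) for each $r$, $p^sK_2(A)\subseteq K_r$ once $s\gg0$; and (ii) for each $s$, $K_r\subseteq p^sK_2(A)$ once $r\gg0$. Assertion (i) is immediate from the first paragraph, since $K_2(A)/K_r=K_2(A/\frak m^r)$ is killed by $p^{r-1}$.

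Assertion (ii) is \textbf{the heart of the matter}, and is where $\Char A=0$ must be used. By \cite{Dennis1973a} the group $K_r$ is generated by Steinberg symbols $\{u,1+a\}$ with $u\in\mult A$, $a\in\frak m^r$, so one must show that every such symbol lies in $p^sK_2(A)$ once $r$ is large depending only on $s$. For a complete discrete valuation ring $\roi$ this is the mechanism behind proposition~\ref{proposition_K2_of_a_cdvr}: $\ker(K_2(\roi)\to K_2(\roi/\frak m^r))$ stabilises for $r\gg0$ to the subgroup $\Lambda$ of Moore's theorem, which is uniquely divisible and so contained in every $p^sK_2(\roi)$. In general I would argue in two steps: first, that the tower $\{K_2(A/\frak m^r)\}_r$ stabilises --- equivalently that $\ker(K_2(A/\frak m^{r+1})\to K_2(A/\frak m^r))$, a quotient of the square-zero relative group $K_2(A/\frak m^{r+1},\frak m^r/\frak m^{r+1})$, vanishes for $r\gg0$; second, that the resulting stable kernel $K_\infty:=\ker(K_2(A)\to\hat K_2(A))$ is contained in $\bigcap_sp^sK_2(A)$ (the reverse inclusion is elementary, as any element infinitely $p$-divisible in $K_2(A)$ dies in each finite $p$-group $K_2(A/\frak m^r)$). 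Granting both, $K_r=K_\infty\subseteq p^sK_2(A)$ holds with one $r$ for all $s$, which is (ii), and combined with (i) this yields the isomorphism of pro-objects, hence $\hat K_2(A)\cong\projlim_rK_2(A)/p^rK_2(A)$. I expect the \textbf{main obstacle} to be precisely this $p$-divisibility of $\ker(K_2(A)\to\hat K_2(A))$: for a general one-dimensional Noetherian local ring it seems to need either a direct symbol manipulation \`a la Dennis--Stein, or a structural input on $K_2(A)$ generalising the divisibility of $\Lambda$ in Moore's theorem; everything else in the argument is formal.
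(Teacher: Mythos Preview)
Your overall framework---showing that the filtrations $\{K_r\}_r$ and $\{p^sK_2(A)\}_s$ are mutually cofinal---is exactly the paper's strategy, and your treatment of direction (i) is fine (the paper argues it slightly differently, observing that $E_1=K_2(A)$ since $K_2$ of a finite field vanishes, and that $p^sE_1\subseteq E_r$ once $(1+\frak m)^{p^s}\subseteq 1+\frak m^r$; but your d\'evissage works too).

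The genuine gap is direction (ii). You correctly isolate the task---show that each generator $\{u,1+a\}$ of $K_r$, with $a\in\frak m^r$, lies in $p^sK_2(A)$ once $r\gg0$---and you even name ``a direct symbol manipulation \`a la Dennis--Stein'' as a possible route. That is precisely what the paper does, and it is short: since $p\in\frak m$ and $A$ is one-dimensional Noetherian, $\frak m^r\subseteq p^sA$ for $r\gg0$; then Hensel's lemma gives $1+p^sA\subseteq(1+pA)^{p^s}$, so $1+a=v^{p^s}$ for some $v\in 1+pA$ and hence $\{u,1+a\}=\{u,v^{p^s}\}=p^s\{u,v\}\in p^sK_2(A)$. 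This is the step you were missing.

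By contrast, the detour you propose---first proving that the tower $\{K_2(A/\frak m^r)\}_r$ stabilises, then that the stable kernel $K_\infty$ is contained in $\bigcap_sp^sK_2(A)$---is both harder and unjustified. Stabilisation is a much stronger statement than cofinality and is not established anywhere in the paper for singular $A$; the square-zero relative groups $K_2(A/\frak m^{r+1},\frak m^r/\frak m^{r+1})$ have no reason to vanish in general, and even if the tower did stabilise you would still need a divisibility argument for $K_\infty$ that ultimately reduces to the same symbol manipulation above. So the detour adds hypotheses without avoiding the work.
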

\begin{proof}
As in the proof of the previous proposition, $K_2(A)\to K_2(A/\frak m^r)$ is surjective; so $\hat K_2(A)=\projlim_r K_2(A)/E_r$, where $E_r=\ker(K_2(A)\to K_2(A/\frak m^r))$. Also as in the proof of the previous proposition, $E_r$ is generated by Steinberg symbols of the form $\{u,1+a\}$ where $u\in\mult A$ and $a\in\frak m^r$.

Given $s>0$ there exists $r\gg 0$ such that $\frak m^r\subseteq p^sA$. Hensel's lemma implies that $1+p^sA\subseteq(1+pA)^{p^s}$, so we see from the description of $E_r$ that $E_r\subseteq p^sK_2(A)$.

Conversely, for any $r>0$ we may pick $s\gg 0$ such that $(1+\frak m)^{p^s}\subseteq 1+\frak m^r$. Therefore $p^sE_1\subseteq E_r$. But $K_2$ of a finite field is trivial, so $E_1=K_2(A)$.

We have proved that the chains of subgroups $\{E_r\}_r$ and $\{p^s K_2(A)\}_s$ are commensurable, completing the proof.
\end{proof}

\subsubsection*{Residue characteristic zero}
Now we turn to $\bb Q$-algebras. Our result, based on the computations in the next section, completely describes completed $K$-groups of complete discrete valuation rings of characteristic zero; its most striking aspect is that the kernel of $\hat K_n(A)\to K_n(k)$ is entirely symbolic:

\begin{proposition}
Let $A$ be a complete discrete valuation ring with characteristic zero residue field $k$; then there is a natural split short exact sequence \[0\To\hat\Omega^{n-1}_{A,\frak m}/d\hat\Omega^{n-2}_{A,\frak m}\To\hat K_n(A)\To K_n(k)\to 0,\] where $\hat\Omega^*_{A,\frak m}:=\ker(\projlim_r\Omega^*_{A/\frak m^r}\To \Omega^*_k)$.
\end{proposition}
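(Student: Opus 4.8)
The plan is to reduce the statement to an explicit computation with the truncated polynomial rings $A/\frak m^r$ and the relative cyclic homology of their nilpotent ideals. Since $A$ is a complete discrete valuation ring whose residue field $k$ has characteristic zero, Cohen structure theory supplies a coefficient field and an isomorphism $A\cong k[[t]]$; in particular $A/\frak m^r\cong k[t]/\pid{t^r}$ and, crucially, the quotient $A/\frak m^r\onto k$ is split by the inclusion $k\into A/\frak m^r$. Hence $K_n(A/\frak m^r)\cong K_n(k)\oplus K_n(A/\frak m^r,\frak m/\frak m^r)$ naturally in $r$, and applying $\projlim_r$ already yields the asserted split short exact sequence, the splitting being the resulting map $K_n(k)\to\hat K_n(A)$. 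It then remains to identify the kernel $\projlim_rK_n(A/\frak m^r,\frak m/\frak m^r)$ with $\hat\Omega^{n-1}_{A,\frak m}/d\hat\Omega^{n-2}_{A,\frak m}$, where the K\"ahler differentials are the absolute ones (over $\bb Q$; taking them over $k$ would give the wrong answer).

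Next I would pass to cyclic homology. Goodwillie's theorem gives a natural isomorphism $K_n(A/\frak m^r,\frak m/\frak m^r)\isoto HC^{\bb Q}_{n-1}(A/\frak m^r,\frak m/\frak m^r)$ respecting the Adams/Hodge decomposition (by Cathelineau, exactly as in corollary~\ref{corollary_not_top_degree_of_Adams}); since the nilpotent ideal is split, this target is the reduced cyclic homology $\tilde{HC}^{\bb Q}_{n-1}(A/\frak m^r)$, a finite direct sum of Adams pieces $\tilde{HC}^{\bb Q,(i)}_{n-1}(A/\frak m^r)$ with $0\le i\le n-1$. Applying lemma~\ref{lemma_not_top_degree_of_Hodge_for_truncated_polys} with $\bb Q$ in the role of its base field and $k$ in the role of its field $K$, we learn that for $1\le i<n-1$ the inverse system $\{\tilde{HC}^{\bb Q,(i)}_{n-1}(A/\frak m^r)\}_r$ is pro-zero, hence has vanishing $\projlim$ and $\projlim^1$. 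So only the top piece $i=n-1$ survives the limit: $\projlim_rK_n(A/\frak m^r,\frak m/\frak m^r)\cong\projlim_r\tilde{HC}^{\bb Q,(n-1)}_{n-1}(A/\frak m^r)$.

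It then remains to compute this top piece and its limit. For any commutative $\bb Q$-algebra $R$ the top Adams piece of cyclic homology is the de Rham term $HC^{\bb Q,(m)}_m(R)\cong\Omega^m_{R/\bb Q}/d\Omega^{m-1}_{R/\bb Q}$, and since $k\subseteq A/\frak m^r$ the split relative version is the honest quotient $\tilde{HC}^{\bb Q,(n-1)}_{n-1}(A/\frak m^r)\cong\Omega^{n-1}_\circ/d\,\Omega^{n-2}_\circ$, where $\Omega^j_\circ:=\ker\big(\Omega^j_{(A/\frak m^r)/\bb Q}\to\Omega^j_{k/\bb Q}\big)$, so that $\projlim_r\Omega^j_\circ=\hat\Omega^j_{A,\frak m}$. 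To pass $\projlim_r$ through this quotient I would use two features of the rings $k[t]/\pid{t^r}$: all transition maps of the systems $\{\Omega^j_\circ\}_r$, hence of the systems of boundaries $\{d\,\Omega^j_\circ\}_r$, are surjective, so these systems are Mittag--Leffler and the relevant $\projlim^1$'s vanish; and the reduced de Rham complex $\Omega^\bullet_\circ$ is acyclic for every $r$ --- it is the tensor product over $\bb Q$ of the absolute de Rham complex $\Omega^\bullet_{k/\bb Q}$ with $\big[\,t\,\bb Q[t]/\pid{t^r}\xrightarrow{\ d\ }\bb Q[t]/\pid{t^{r-1}}\,dt\,\big]$, whose differential is an isomorphism in characteristic zero --- which identifies its cycles with its boundaries, making those systems Mittag--Leffler as well. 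Granting these, $\projlim_r\big(\Omega^{n-1}_\circ/d\,\Omega^{n-2}_\circ\big)=\hat\Omega^{n-1}_{A,\frak m}/d\hat\Omega^{n-2}_{A,\frak m}$, which completes the proof. I expect this final interchange of $\projlim_r$ with the de Rham quotient --- the one place where completeness of $A$ and the explicit structure of $k[t]/\pid{t^r}$ genuinely enter --- to be the only real obstacle; everything before it is an assembly of results already in hand.
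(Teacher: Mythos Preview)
Your proof is correct and follows essentially the same route as the paper's: split off $K_n(k)$, apply Goodwillie--Cathelineau to reduce the relative term to Hodge pieces of reduced cyclic homology, kill the sub-top pieces in the pro-limit via lemma~\ref{lemma_not_top_degree_of_Hodge_for_truncated_polys} (the paper goes via its corollary~\ref{corollary_not_top_degree_of_Adams}, which amounts to the same thing), identify the top piece with $\Omega^{n-1}/d\Omega^{n-2}$, and finish with Mittag--Leffler. Your explicit acyclicity argument for the reduced de~Rham complex of $k[t]/\pid{t^r}$ is a nice touch---it cleanly justifies $\projlim_r d\Omega^{n-2}_\circ = d\hat\Omega^{n-2}_{A,\frak m}$, a point the paper passes over with ``taking the limits completes the proof''.
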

\begin{proof}
There is certainly a split short exact sequence \[0\to\projlim_rK_n(A/\frak m^r,\frak m/\frak m^r)\to \hat K_n(A)\to K_n(k)\to 0\tag{\dag},\] and corollary \ref{corollary_not_top_degree_of_Adams} implies that $\projlim_rK_n(A/\frak m^r,\frak m/\frak m^r)=\projlim_rK_n^{(n)}(A/\frak m^r,\frak m/\frak m^r)$. By the Goodwillie isomorphism (which respect the Adams/Hodge decompositions by \cite{Cathelineau1990}), \[K_n^{(n)}(A/\frak m^r,\frak m/\frak m^r)\cong HC_{n-1}^{(n-1)}(A/\frak m^r,\frak m/\frak m^r).\] But for any (commutative, unital) ring $R$, the standard calculation of the top degree part of cyclic homology (e.g.~\cite[Thm.~4.6.8]{Loday1992}) says that $HC_{n-1}^{(n-1)}(R)=\Omega_R^{n-1}/d\Omega_R^{n-2}$. Therefore the kernel in (\dag) is \[\projlim_r\ker(\Omega^{n-1}_{A/\frak m^r}/d\Omega^{n-2}_{A/\frak m^r}\To \Omega^{n-1}_k/d\Omega^{n-2}_k)\] and the rest of the proof simply requires chasing some projective systems.

We will use the standard notation that if $J\subseteq R$ is an ideal in a ring $R$, then $\Omega_{R,J}^m:=\ker(\Omega^m_R\to\Omega^m_{R/J})$ for $m\ge 0$. It is easy to see that if $R\to R/J$ splits, then \[\ker(\Omega^m_R/d\Omega^{m-1}_R\To\Omega^m_k/d\Omega^m_k)=\Omega^m_{R,J}/d\Omega^{m-1}_{R,J}\] So the kernel in (\dag) is $\projlim_r\Omega^{n-1}_{A/\frak m^r,\frak m/\frak m^r}/d\Omega^{n-2}_{A/\frak m^r,\frak m/\frak m^r}$. Noticing that $\Omega^m_{A/\frak m^{r+1},\frak m/\frak m^{r+1}}\to\Omega^m_{A/\frak m^r,\frak m/\frak m^r}$ is surjective for any $m\ge 0$, the projective systems \[0\to \Omega^m_{A/\frak m^r,\frak m/\frak m^r}\to\Omega^m_{A/\frak m^r}\to\Omega^m_k\to 0\tag{$r\ge 1$}\] and \[0\to d\Omega^{m-1}_{A/\frak m^r,\frak m/\frak m^r}\to \Omega^m_{A/\frak m^r,\frak m/\frak m^r}\to \Omega^m_{A/\frak m^r,\frak m/\frak m^r}/d\Omega^{m-1}_{A/\frak m^r,\frak m/\frak m^r}\to 0\tag{$r\ge 1$}\] both satisfy the Mittag-Leffler condition.  Taking the limits completes the proof.
\end{proof}

\begin{remark}
When $A$ is singular, we described the non symbolic part of the kernel of $\hat K_n(A)\to K_n(k)$ in corollary \ref{corollary_explicit decomposition}.
\end{remark}

\part*{Part II: Global theory}

\section{The Zariski cohomology of $K$-theory}\label{section_Zariski}
We now begin the second part of the paper, on global theory using $K$-theoretic ad\`eles, first in the Zariski topology. We begin by describing incomplete ad\`eles for any abelian sheaf on a one-dimensional, Noetherian scheme, before specialising to sheafifed $K$-theory and showing that these ad\`ele groups fit into a long exact Mayer-Vietoris sequence which encodes both localisation and descent for $K$-theory.

\subsection{Incomplete adeles for curves}
Let $X$ be a one-dimensional, Noetherian scheme. We will explain the natural `reparations' (or `incomplete adelic') resolution of any sheaf of abelian groups on $X$. It is simple, though not widely used. $X^i$ denotes the codimension $i$ points of $X$.

Firstly, to fix notation, if $f:W\to X$ is any morphism of schemes, then we denote by $f^*$ and $f_*$ the adjoint pair of functors on sheaves:
\[\xymatrix@=1cm{Ab(W)\ar@/^5mm/[r]^{f_*}&Ab(X)\ar@/^5mm/[l]^{f^*}}\]
Secondly, define a functor\footnote{From the point of view of higher ad\`eles, the more natural notation for this functor is $\bb{A}_X({\bf1},\cdot)$.} \[\bb{A}_1:Ab(X)\to Ab(X),\quad\cal F\mapsto\prod_{x\in X^1}i_{x*}i_x^*(\cal F),\] where $i_x:\Spec k(x)\into X$ is the natural inclusion; in other words, the sections of $\bb A_1(\cal F)$ over an open set $U\subseteq X$ are \[\bb A_1(\cal F)(U)=\prod_{x\in U^1}\cal F_x.\] There are three important properties to notice about $\bb A_1$:
\begin{enumerate}
\item It is an exact, additive functor.
\item It is coaugmented: there is always a natural morphism $\cal F\to\bb{A}_1(\cal F)$, which is an isomorphism whenever $\cal F$ vanishes on a dense open subset of $X$.
\item $\bb{A}_1(\cal F)$ is flasque.
\end{enumerate}
Now let $\cal F$ be a fixed sheaf of abelian groups on $X$. For any dense open subset $V\subseteq X$, set \[\cal F_V=j_{V*}j_V^*\cal F,\] where $j_V:V\into X$ is the natural embedding. As always there is a natural morphism $\cal F\to \cal F_V$, and the kernel and cokernel (call them $A$ and $B$ respectively) are supported on $X\setminus V$: \[0\To A\To\cal F\To\cal F_V\To B\To 0.\] Applying to this sequence the functor $\bb A_1$ and using properties (i) and (ii), we obtain a commutative diagram with exact rows:
\[\xymatrix{
0\ar[r]&A\ar[r]\ar[d]^\cong&\cal F\ar[r]\ar[d]&\cal F_V\ar[r]\ar[d]& B\ar[r]\ar[d]^\cong& 0\\
0\ar[r]&\bb{A}_1(A)\ar[r]&\bb{A}_1(\cal F)\ar[r]&\bb{A}_1(\cal F_V)\ar[r]& \bb{A}_1(B)\ar[r]& 0
}\]
Therefore the central square of this diagram is cartesian and co-cartesian in the category $Ab(X)$; it remains such after taking the limit over $V$, and so, in conclusion:
\begin{lemma}\label{lemma_bicartesian_for_Zariski}
For any $\cal F\in Ab(X)$, the following diagram is bicartesian:
\[\xymatrix{
\cal F\ar[r]\ar[d]&\indlim_V\cal F_V\ar[d]\\
\bb{A}_1(\cal F)\ar[r]&\indlim_V\bb{A}_1(\cal F_V),
}\]
where $V$ runs over all dense open subsets of $X$.

Moreover, apart from $\cal F$, the remaining three corners are flasque shaves.
\end{lemma}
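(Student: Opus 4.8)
Here is the plan I would follow.

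The plan is to build directly on the discussion immediately preceding the lemma, which already establishes the bicartesian claim for each \emph{fixed} dense open $V\subseteq X$. Concretely, for such a $V$ the four-term exact sequence $0\to A\to\cal F\to\cal F_V\to B\to 0$, after applying the exact functor $\bb A_1$ and invoking the coaugmentation property (ii) on $A$ and $B$, yields a morphism of four-term exact sequences which is an isomorphism on the two outer terms; I would record the elementary fact that such a morphism produces a bicartesian square on the two inner terms (split each four-term sequence at its central image and apply the snake lemma twice, or equivalently check that the induced maps on the kernels and cokernels of the two horizontal arrows are isomorphisms). To reach the statement of the lemma I would then pass to the filtered colimit over $V$: the dense opens of $X$ form a filtered poset under reverse inclusion, since on a Noetherian scheme the intersection of two dense opens is dense, and $Ab(X)$ being a Grothendieck abelian category, filtered colimits in it are exact and hence preserve bicartesian squares (a square is bicartesian exactly when its associated four-term sequence $0\to P\to Q\oplus R\to S\to 0$ is exact). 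Since $\cal F$ and $\bb A_1(\cal F)$ do not depend on $V$, the colimit square is the one displayed.

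For the flasqueness of the three corners other than $\cal F$, I would note first that $\bb A_1(\cal F)$ is flasque by property (iii), and that each $\bb A_1(\cal F_V)$ is flasque by the same property applied to $\cal F_V$; since $X$ is Noetherian, a filtered colimit of flasque sheaves is flasque (sections over a quasi-compact open commute with the colimit, and a filtered colimit of surjections is surjective), so $\indlim_V\bb A_1(\cal F_V)$ is flasque.

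The remaining corner, $\indlim_V\cal F_V$, is the one I would spend the most care on, though the work is still light: I claim it is canonically the codimension-zero analogue $\prod_{x\in X^0}i_{x*}i_x^*\cal F$ of $\bb A_1(\cal F)$, which is flasque, being a product of pushforwards of sheaves supported on the generic points. To see this, evaluate on a Noetherian, hence quasi-compact, open $U$: $(\indlim_V\cal F_V)(U)=\indlim_V\cal F(U\cap V)$, and as $V$ ranges over the dense opens of $X$ the opens $U\cap V$ range over exactly the opens $W$ with $U^0\subseteq W\subseteq U$ (given such a $W$, the set $W\cup(X\setminus\overline{U})$ is a dense open of $X$ whose intersection with $U$ is precisely $W$). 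Using $U^0=X^0\cap U$ together with the fact that the finitely many points of $U^0$ can be separated by pairwise disjoint opens, one computes $\indlim_{U^0\subseteq W\subseteq U}\cal F(W)=\prod_{x\in U^0}\cal F_x$, which matches the sections of $\prod_{x\in X^0}i_{x*}i_x^*\cal F$ over $U$, compatibly with restrictions; flasqueness is then transparent, the restriction maps being projections of products.

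The only genuinely thought-requiring step is this last identification of $\indlim_V\cal F_V$ with the generic-point adeles (and, to a lesser extent, making precise that filtered colimits preserve bicartesian squares and flasqueness on a Noetherian space); everything else is either contained in the preamble to the lemma or is routine.
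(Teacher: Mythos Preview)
Your proposal is correct and follows essentially the same route as the paper. The paper's proof likewise regards the bicartesian claim and the flasqueness of the two $\bb A_1$-corners as already handled by the preamble, and devotes itself to identifying $\indlim_V\cal F_V$ with $\prod_{y\in X^0}i_{y*}i_y^*(\cal F)$; the only difference is that the paper verifies this identification on stalks at points of $X^1$, whereas you compute sections over an arbitrary open $U$ and use separation of the finitely many generic points. Both arguments are short and yield the same identification, so there is no substantive divergence.
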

\begin{proof}
All that is left to prove is that $\indlim_V\cal F_V$ is flasque; in fact, we will show that \[\indlim_V\cal F_V=\prod_{y\in X^0}i_{y*}i_y^*(\cal F).\] It is enough to check this on stalks of points in $X^1$.

Fix a dense open subset $V$, and let $x\in X\setminus V$. Then standard $(f_*, f^*)$-functoriality tells us that \[(\cal F_V)_x=f_x^*\cal F\,(D_x^\circ),\] where $D_x^\circ=\Spec\roi_{X,x}\setminus\{x\}$ is the punctured disk around $x$ and $f_x:D_x^\circ\to X$ is the natural morphism. But $D_x^\circ$ is a zero dimensional scheme with points equal to those $y\in X^0$ such that $x\in\Cl y$; thus \[(\cal F_V)_x=\prod_{\substack{y\in X^0\\\sub{s.t. }y>x}}\cal F_y=\prod_{y\in X^0}i_{y*}i_y^*(\cal F)_x,\tag{\dag}\] as required. Here we have written $y>x$ to mean that $x$ is a strict specialisation of $y$.
\end{proof}

In order to avoid too many messy expressions like (\dag), it is essential to introduce restricted product notation:

\begin{definition}[Restricted product notation]\label{definition_restricted_prod}
Let $\cal G$ be a presheaf of abelian groups on $X$. If $x\in X^1$, we will write \[\cal G_x^\circ:=f_x^*\cal G\,(D_x^\times)=\prod_{\substack{y\in X^0\\\sub{s.t. }y>x}}\cal G_y\] for the sections of $\cal G$ on the punctured spectrum $D_x^\circ$, where $f_x:D_x^\circ\to X$ is as in the previous lemma. Next we introduce typical adelic `restricted product' notation: \[\rprod_{x\in X^1}\cal G_x^\circ:=\indlim_{\substack{V\subseteq X\\\sub{dense open}}}\left(\prod_{x\in V^1}\cal G_x\times\prod_{x\in X\setminus V}G_x^\circ\right).\]

More generally, suppose that we are simply given a morphism of abelian groups $A_x\to A_x^\circ$ for each $x\in X^1$; then we may write \[\rprod_{x\in X^1}A_x^\circ:=\indlim_{\substack{V\subseteq X\\\sub{dense open}}}\left(\prod_{x\in V^1}A_x\times\prod_{x\in X\setminus V}A_x^\circ\right).\]
\end{definition}

\begin{example}
\begin{enumerate}
\item If $\cal G=\roi_X$ then $\cal G_x^\circ=\Frac\roi_{X,x}$. More generally, if $\cal G$ is a coherent $\roi_X$ module then $\cal G_x=M_x\otimes_{\roi_{X,x}}\Frac\roi_{X,x}$.
\item If $X$ is the spectrum of the ring of integers of a number field, or a smooth projective curve over a final field, then \[\rprod_{x\in X^1}\roi_{X,x}^\circ=\rprod_{x\in X^1}\Frac\roi_{X,x}=\mbox{usual incomplete ring of (finite) ad\`eles}.\]
\item The previous lemma implies that the global sections of $\indlim_V\bb A_1(\cal F_V)$ are $\rprod_{x\in X^1}\cal F_x^\circ$.
\end{enumerate}
\end{example}

In conclusion we reach the main `theorem' of one-dimensional ad\`eles:

\begin{proposition}\label{proposition_ses_for_Zariski_cohomology}
Let $\cal F$ be an abelian sheaf on $X$. Then there are natural isomorphisms \[H^*\left(0\to\prod_{x\in X^1}\cal F_x\oplus\prod_{y\in X^0}\cal F_y\to\rprod_{x\in X^1}\cal F_x^\circ\to 0\right)\cong H^*(X,\cal F).\] 
\end{proposition}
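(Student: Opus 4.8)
The plan is to recognize the two-term complex appearing in the statement as the complex of global sections of a short flasque resolution of $\cal F$, and then invoke Lemma~\ref{lemma_bicartesian_for_Zariski}. Concretely, consider the bicartesian square of Lemma~\ref{lemma_bicartesian_for_Zariski},
\[\xymatrix{
\cal F\ar[r]\ar[d]&\indlim_V\cal F_V\ar[d]\\
\bb{A}_1(\cal F)\ar[r]&\indlim_V\bb{A}_1(\cal F_V).
}\]
Since this square is cartesian \emph{and} cocartesian in $Ab(X)$, the induced sequence
\[0\To\cal F\To \bb A_1(\cal F)\oplus\indlim_V\cal F_V\To\indlim_V\bb A_1(\cal F_V)\To 0\]
is exact (the diagonal map being the difference of the two coaugmentations). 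By the last sentence of Lemma~\ref{lemma_bicartesian_for_Zariski}, the three non-$\cal F$ terms are flasque, so this is a flasque resolution of $\cal F$ of length two. Hence $H^*(X,\cal F)$ is computed by the complex of global sections $0\to \bb A_1(\cal F)(X)\oplus(\indlim_V\cal F_V)(X)\to(\indlim_V\bb A_1(\cal F_V))(X)\to 0$, using that global sections is exact on flasque sheaves.

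It then remains to identify each global-section group with the displayed terms. First, $\bb A_1(\cal F)(X)=\prod_{x\in X^1}\cal F_x$ by the defining formula for $\bb A_1$. Second, the proof of Lemma~\ref{lemma_bicartesian_for_Zariski} established $\indlim_V\cal F_V=\prod_{y\in X^0}i_{y*}i_y^*(\cal F)$, whose global sections are $\prod_{y\in X^0}\cal F_y$. Third, part (iii) of the Example following Definition~\ref{definition_restricted_prod} records that the global sections of $\indlim_V\bb A_1(\cal F_V)$ are $\rprod_{x\in X^1}\cal F_x^\circ$; alternatively one sees this directly by taking the filtered colimit over $V$ of $\bb A_1(\cal F_V)(X)=\prod_{x\in X^1}(\cal F_V)_x=\prod_{x\in V^1}\cal F_x\times\prod_{x\notin V}\cal F_x^\circ$, which is exactly the restricted product. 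This produces the asserted complex and hence the isomorphism on cohomology.

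The only genuine points requiring care — and the main (mild) obstacle — are the exactness claims: that a square which is simultaneously cartesian and cocartesian in the abelian category $Ab(X)$ yields a short exact sequence, and that a filtered colimit of flasque sheaves over the directed system of dense opens $V$ remains flasque, so that the colimit terms really are acyclic. Both are standard: the first is the elementary characterization of bicartesian squares in an abelian category, and the second follows since filtered colimits are exact in $Ab(X)$ and a filtered colimit of flasque sheaves is flasque. One should also note that the global sections functor commutes with the relevant filtered colimit here (e.g.\ because $X$ is Noetherian, or simply because each $\bb A_1(\cal F_V)$ is a product indexed by $X^1$ and the colimit is along inclusions of index sets), so that the complex of sections of the colimit resolution is the colimit of the complexes of sections, which is the displayed complex. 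Everything else is bookkeeping with the $(f_*,f^*)$-functoriality already carried out in the proof of Lemma~\ref{lemma_bicartesian_for_Zariski}.
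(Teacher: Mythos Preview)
Your proof is correct and follows exactly the approach the paper intends: the paper's own proof is the single sentence ``This follows by taking cohomology in lemma~\ref{lemma_bicartesian_for_Zariski},'' and you have simply unpacked what that means---namely, that the bicartesian square yields a two-step flasque resolution of $\cal F$ whose global sections are the displayed complex. Your additional care about filtered colimits of flasque sheaves and commutation of global sections with filtered colimits is welcome but not strictly needed here, since the flasqueness of $\indlim_V\cal F_V$ and $\indlim_V\bb A_1(\cal F_V)$ was already asserted in Lemma~\ref{lemma_bicartesian_for_Zariski}.
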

\begin{proof}
This follows by taking cohomology in lemma \ref{lemma_bicartesian_for_Zariski}.
\end{proof}

\subsection{The Zariski long exact sequence for $K$-theory}\label{subsection_Zariski_les}
$X$ continues to be a one-dimensional, Noetherian scheme, which we assume further is quasi-separated (i.e., the diagonal map $X\to X\times_{\bb Z}X$ is quasi-compact). In this section we extend some arguments from \cite{Weibel1986} to show how the localisation theorem for $K$-theory yields a long exact Mayer-Vietoris sequence on $X$, and we then compare it to the short exact sequences arising from proposition \ref{proposition_ses_for_Zariski_cohomology} with $\cal F=\cal K_n$ (sheafification of the $K_n$ presheaf in the Zariski topology).

In the Cohen-Macaulay case, our corollary \ref{corollary_descent_in_Zariski} is precisely the main theorem of \S2 of \cite{Weibel1986}. Weibel's proof used `truncations' of $K$-theoretic ad\`eles, such as in line (\ddag) in the next proof; he remarked that natural flasque resolution of $\cal K_n$ would provide an easier proof of his results, and one goal of the next proposition is to show that our adelic resolution does exactly that.

\begin{remark}
At the risk of repeating remark \ref{remark_conjecture_for_non_CM}, we comment that we have chosen to work with arbitrarily singular $X$ and therefore need the localisation theorem of Thomason-Trobaugh; if we were to restrict to Cohen-Macaualy $X$, the original Quillen-Grayson localisation theorem would suffice.

$K(X)$ means the $K$-theory spectrum of the complicial biWaldhausen category $\op{Perf}(X)$ of perfect complexes on $X$ of globally finite Tor-amplitude. See the appendix for more details.
\end{remark}

\begin{proposition}\label{proposition_MV_sequence_in_Zariski_top}
Suppose $X$ is a one-dimensional, quasi-separated, Noetherian scheme; then there is a long-exact Mayer-Vietoris sequence \[\cdots\to K_n(X)\to\prod_{x\in X^1}K_n(\roi_{X,x})\oplus\prod_{y\in X^0}K_n(\roi_{X,y})\to\rprod_{x\in X^1}K_n(\Frac\roi_{X,x})\to\cdots,\] where we use restricted product notation (definition \ref{definition_restricted_prod}) for the data $K_n(\roi_{X,x})\to K_n(\Frac\roi_{X,x})$, where $x\in X^1$.
\end{proposition}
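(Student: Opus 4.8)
The plan is to realise the asserted sequence as the Mayer--Vietoris long exact sequence of a homotopy cartesian square of spectra, obtained by taking a homotopy colimit, over the dense open subsets $V\subseteq X$, of the $K$-theory localisation fibre sequences attached to the finitely many closed points of $X\setminus V$. I work throughout with the nonconnective ($=$ Bass) $K$-theory spectrum of the category $\op{Perf}(X)$, so that the Thomason--Trobaugh localisation theorem is an honest fibre sequence in all degrees; the claimed sequence then results simply by passing to homotopy groups at the end. The fixed-$V$ squares produced below are precisely the ``truncations'' of $K$-theoretic ad\`eles exploited by Weibel.

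Fix a dense open $V\subseteq X$ and put $Z_V:=X\setminus V$; since $V$ contains every generic point of $X$, the set $Z_V$ consists of finitely many codimension-one closed points. As $X$ is quasi-compact and quasi-separated, so is $V$, and Thomason--Trobaugh localisation yields a homotopy fibre sequence $K(X\text{ on }Z_V)\to K(X)\to K(V)$. The points of $Z_V$ form pairwise disjoint closed subsets of $X$, so the category of perfect complexes on $X$ supported on $Z_V$ decomposes as the product of the categories of those supported at the individual points; hence $K(X\text{ on }Z_V)\simeq\prod_{x\in Z_V}K(X\text{ on }\{x\})$. For each $x\in Z_V$ the flat morphism $\Spec\roi_{X,x}\to X$ is an isomorphism infinitely near $\{x\}$, so the invariance of $K$-theory with supports under such morphisms (reviewed in the appendix) gives $K(X\text{ on }\{x\})\simeq K(\roi_{X,x}\text{ on }\frak m_x)$; and a second application of localisation, now on $\Spec\roi_{X,x}$ whose punctured spectrum is $\Spec\Frac\roi_{X,x}$ by remark \ref{remark_conjecture_for_non_CM}, presents this group as the homotopy fibre of $K(\roi_{X,x})\to K(\Frac\roi_{X,x})$.

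Restriction along the morphisms $\Spec\roi_{X,x}\to X$ (for $x\in X^1$) and $\Spec\Frac\roi_{X,x}\to X$ (for $x\in Z_V$) now yields a commutative square
\[\begin{CD} K(X) @>>> K(V) \\ @VVV @VVV \\ \prod_{x\in X^1}K(\roi_{X,x}) @>>> \prod_{x\in V^1}K(\roi_{X,x})\times\prod_{x\in Z_V}K(\Frac\roi_{X,x}) \end{CD}\]
whose bottom arrow is the identity on the factors indexed by $V^1$ and is $\prod_{x\in Z_V}\bigl(K(\roi_{X,x})\to K(\Frac\roi_{X,x})\bigr)$ on the rest. The homotopy fibre of the bottom arrow is $\prod_{x\in Z_V}K(\roi_{X,x}\text{ on }\frak m_x)$, that of the top arrow is $K(X\text{ on }Z_V)$, and the natural map between them is exactly the equivalence assembled in the previous paragraph, so the square is homotopy cartesian. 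These squares are functorial in $V$, and the dense open subsets form a directed poset under reverse inclusion (they are stable under finite intersection), so I pass to the homotopy colimit over $V$. Filtered homotopy colimits of spectra preserve fibre sequences, hence the colimit square is again homotopy cartesian; its four corners are $K(X)$; $\varinjlim_V K(V)\simeq K\bigl(\Spec\prod_{y\in X^0}\roi_{X,y}\bigr)\simeq\prod_{y\in X^0}K(\roi_{X,y})$ by continuity of $K$-theory; $\prod_{x\in X^1}K(\roi_{X,x})$; and the spectrum $\mathcal A:=\varinjlim_V\bigl(\prod_{x\in V^1}K(\roi_{X,x})\times\prod_{x\in Z_V}K(\Frac\roi_{X,x})\bigr)$, whose homotopy groups are $\pi_n\mathcal A=\rprod_{x\in X^1}K_n(\Frac\roi_{X,x})$ in the sense of definition \ref{definition_restricted_prod} (using that $\pi_n$ commutes with filtered colimits and with products). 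Passing to homotopy groups of this homotopy cartesian square of spectra produces the desired long exact Mayer--Vietoris sequence.

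The step I expect to be the real obstacle is the identification $K(X\text{ on }\{x\})\simeq K(\roi_{X,x}\text{ on }\frak m_x)$: this local --- indeed infinitesimally local --- nature of $K$-theory with supports is precisely what forces the use of the Thomason--Trobaugh machinery rather than the older Quillen--Grayson localisation for Cartier divisors (compare remark \ref{remark_conjecture_for_non_CM}), the latter being insensitive to non-Cohen--Macaulay or non-reduced phenomena. A secondary point requiring care is the continuity input $\varinjlim_V K(V)\simeq K\bigl(\Spec\prod_{y\in X^0}\roi_{X,y}\bigr)$: one first restricts to a cofinal family of dense opens with affine bonding maps (or covers $X$ by affines and uses Mayer--Vietoris) so that Thomason--Trobaugh continuity applies, and one must also confirm that the formation of the restricted product is compatible with $\pi_n$ in the manner demanded by definition \ref{definition_restricted_prod}.
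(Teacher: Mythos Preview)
Your argument is correct and follows essentially the same route as the paper's proof: Thomason--Trobaugh localisation for $V\subseteq X$, excision (isomorphism infinitely near) to identify $K(X\text{ on }Z_V)$ with $\prod_{x\in Z_V}K(\roi_{X,x}\text{ on }\frak m_x)$, and then passage to the limit over dense opens $V$. The only cosmetic difference is that you build in the ``artificial'' factors $\prod_{x\in V^1}K(\roi_{X,x})$ already at the level of spectra and take a filtered homotopy colimit, whereas the paper first extracts the long exact sequence with $K_n(V)$ in it, then inserts the extra summand $\prod_{x\in V^1}K_n(\roi_{X,x})$ by hand into consecutive terms of that sequence of abelian groups, and finally takes the filtered colimit in $Ab$.
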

\begin{proof}
Let $V$ be a dense open subset of $X$. Then Thomason-Trobaugh's localisation and excision theorems, and passing to the limit, yield a fibre sequence of $K$-theory spectra \[K(X\mbox{ on }X\setminus V)\to K(X)\to K(V)\tag{\dag}\] and a homotopy equivalence \[K(X\mbox{ on }X\setminus V)\xto{\sim}\prod_{x\in X\setminus V} K(\Spec\roi_{X,x}\mbox{ on }x)\] For each $x\in X\setminus V$ we also have the local fibre sequence \[K(\Spec\roi_{X,x}\mbox{ on }x)\to K(\roi_{X,x})\to K(\Frac\roi_{X,x}).\] Taking the product over all $x\in X\setminus V$ and comparing with fibre sequence (\dag) reveals that 
\[\xymatrix{
K(X)\ar[r]\ar[d]& K(V)\ar[d]\\
\prod_{x\in X\setminus V}K(\roi_{X,x})\ar[r]&\prod_{x\in X\setminus V}K(\Frac\roi_{X,x})
}\] is a homotopy cartesian square of spectra, so that it yields a long-exact Mayer-Vietoris sequence of the homotopy groups: \[\cdots\to K_n(X)\to \prod_{x\in X\setminus V}K_n(\roi_{X,x})\oplus K_n(V)\to\prod_{x\in X\setminus V}K_n(\Frac\roi_{X,x})\to\cdots\tag{\ddag}\]  Evidently this remains exact if we artificially add a factor of $\prod_{x\in V^1}K_n(\roi_{X,x})$ to consecutive terms: \[\cdots\to K_n(X)\to \prod_{x\in X^1}K_n(\roi_{X,x})\oplus K_n(V)\to\prod_{x\in V^1}K_n(\roi_{X,x})\times\prod_{x\in X\setminus V}K_n(\Frac\roi_{X,x})\to\cdots\] 

The proof is completed by taking the limit over dense opens $V$.
\end{proof}

From the proposition we obtain the following local formulae for the cohomology groups $H^i(X,\cal K_n)$:

\begin{corollary}\label{corollary_ker_coker_description_of_K_groups}
Let $X$ be as in the proposition; then $H^0(X_\sub{Zar},\cal K_n)$ and $H^1(X_\sub{Zar},\cal K_{n+1})$ are equal to the image and kernel, respectively, of the diagonal map \[K_n(X)\to \prod_{x\in X^1}K_n(\roi_{X,x})\oplus \prod_{y\in X^0}K_n(\roi_{X,y}).\]
\end{corollary}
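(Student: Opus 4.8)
The plan is to feed the Zariski sheaf $\cal K_n$ into Proposition \ref{proposition_ses_for_Zariski_cohomology} and then match the resulting two-term complex with the long exact sequence of Proposition \ref{proposition_MV_sequence_in_Zariski_top}. First I would compute the relevant stalks and sections: since $K$-theory commutes with filtered colimits of rings, the stalk of $\cal K_n$ at any $z\in X$ is $K_n(\roi_{X,z})$, so $(\cal K_n)_x=K_n(\roi_{X,x})$ for $x\in X^1$ and $(\cal K_n)_y=K_n(\roi_{X,y})$ for $y\in X^0$. For $x\in X^1$ one has $\Frac\roi_{X,x}=\prod_{y>x}\roi_{X,y}$, a finite product of local rings (the localisations at the finitely many minimal primes of the one-dimensional Noetherian local ring $\roi_{X,x}$), and $K$-theory carries finite products to finite direct sums; hence $(\cal K_n)_x^\circ=\prod_{y>x}K_n(\roi_{X,y})=K_n(\Frac\roi_{X,x})$. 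Plugging these into Proposition \ref{proposition_ses_for_Zariski_cohomology} exhibits $H^*(X_\sub{Zar},\cal K_n)$ as the cohomology of the complex $\prod_{x\in X^1}K_n(\roi_{X,x})\oplus\prod_{y\in X^0}K_n(\roi_{X,y})\to\rprod_{x\in X^1}K_n(\Frac\roi_{X,x})$ placed in degrees $0$ and $1$; in particular $H^0(X_\sub{Zar},\cal K_n)$ is the kernel of this arrow and $H^1(X_\sub{Zar},\cal K_n)$ its cokernel.

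Next I would identify this arrow with the corresponding arrow in the Mayer--Vietoris sequence of Proposition \ref{proposition_MV_sequence_in_Zariski_top}. Both are assembled from the same data, namely the localisation maps $K_n(\roi_{X,x})\to K_n(\Frac\roi_{X,x})$ for $x\in X^1$ together with the restriction maps $K_n(\roi_{X,y})\to K_n(\Frac\roi_{X,x})$ along $y>x$, taken as a difference and passed to the limit over dense open $V\subseteq X$. Granting this identification, exactness of the Mayer--Vietoris sequence at $\prod_xK_n(\roi_{X,x})\oplus\prod_yK_n(\roi_{X,y})$ shows that the kernel of the arrow equals the image of the diagonal map $K_n(X)\to\prod_xK_n(\roi_{X,x})\oplus\prod_yK_n(\roi_{X,y})$, which is therefore $H^0(X_\sub{Zar},\cal K_n)$; this is the first assertion.

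For the second assertion I would rerun the stalk computation with $\cal K_{n+1}$ in place of $\cal K_n$: Proposition \ref{proposition_ses_for_Zariski_cohomology} now presents $H^1(X_\sub{Zar},\cal K_{n+1})$ as the cokernel of $\prod_xK_{n+1}(\roi_{X,x})\oplus\prod_yK_{n+1}(\roi_{X,y})\to\rprod_xK_{n+1}(\Frac\roi_{X,x})$. In the Mayer--Vietoris sequence this cokernel is carried isomorphically by the connecting map $\partial$ onto $\Im\!\left(\partial\colon\rprod_xK_{n+1}(\Frac\roi_{X,x})\to K_n(X)\right)$, and exactness at $K_n(X)$ identifies this image with the kernel of the diagonal map $K_n(X)\to\prod_xK_n(\roi_{X,x})\oplus\prod_yK_n(\roi_{X,y})$, as required.

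The only point here that is not purely formal is the compatibility invoked in the second paragraph: that the differential of the adelic two-term complex coincides, up to an irrelevant sign, with the map appearing in the Mayer--Vietoris sequence. This is in effect already contained in the proofs of Lemma \ref{lemma_bicartesian_for_Zariski} and Proposition \ref{proposition_MV_sequence_in_Zariski_top}, both of which compare a global object with its localisations at the finitely many points of $X\setminus V$ and then take the colimit over $V$; but since the entire corollary rests on it I would spell out the matching of the two arrows explicitly rather than leave it to the reader. Everything else is a diagram chase in the long exact sequence.
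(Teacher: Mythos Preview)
Your proposal is correct and follows essentially the same route as the paper: apply Proposition \ref{proposition_ses_for_Zariski_cohomology} with $\cal F=\cal K_n$ to obtain the short exact sequence identifying $H^0$ and $H^1$ as kernel and cokernel of the adelic differential, then splice with the Mayer--Vietoris sequence of Proposition \ref{proposition_MV_sequence_in_Zariski_top} to read off the image and kernel descriptions. The paper's proof is terser---it simply says ``this gives us the formula for $H^0$; for $H^1$, combine this exact sequence with the long exact sequence of the previous proposition''---whereas you spell out the stalk computations $(\cal K_n)_z=K_n(\roi_{X,z})$ and $(\cal K_n)_x^\circ=K_n(\Frac\roi_{X,x})$ and flag the compatibility of the two arrows, but the logical content is identical.
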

\begin{proof}
Proposition \ref{proposition_ses_for_Zariski_cohomology} for the sheaf $\cal F=\cal K_n$ gives us an exact sequence
\[0\to H^0(X_\sub{Zar},\cal K_n)\to\prod_{x\in X^1} K_n(\roi_{X,x})\oplus\prod_{y\in X^0}K_n(\roi_{X,y})\to\rprod_{x\in X^1} K_n(\Frac\roi_{X,x})\to H^1(X_\sub{Zar},\cal K_n)\to 0.\]
This gives us the formula for $H^0$. For $H^1$, combine this exact sequence with the long exact sequence of the previous proposition.
\end{proof}

From the previous corollary we obtain the descent spectral sequence on $X$ for $K$-theory:

\begin{corollary}\label{corollary_descent_in_Zariski}
There are natural exact sequences \[0\to H^1(X_\sub{Zar},\cal K_{n+1})\to K_n(X)\to H^0(X_\sub{Zar},\cal K_n)\to 0.\]
\end{corollary}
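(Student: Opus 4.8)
The plan is to observe that this corollary is a purely formal consequence of corollary \ref{corollary_ker_coker_description_of_K_groups}, in which all of the geometric input (localisation, the adelic flasque resolution) has already been absorbed. Write $d_n\colon K_n(X)\to\prod_{x\in X^1}K_n(\roi_{X,x})\oplus\prod_{y\in X^0}K_n(\roi_{X,y})$ for the single diagonal map appearing there. Corollary \ref{corollary_ker_coker_description_of_K_groups} identifies $H^0(X_\sub{Zar},\cal K_n)$ with $\Im d_n$ and $H^1(X_\sub{Zar},\cal K_{n+1})$ with $\ker d_n$; the key point to notice is that it is literally the \emph{same} map $d_n$ whose image and kernel appear, so the two identifications are compatible with one another.

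Granting this, the first step is the tautology that for any homomorphism of abelian groups $f\colon M\to N$ one has a canonical short exact sequence $0\to\ker f\to M\to\Im f\to 0$. Applying it to $f=d_n$ and substituting the two identifications coming from corollary \ref{corollary_ker_coker_description_of_K_groups} yields exactly
\[0\to H^1(X_\sub{Zar},\cal K_{n+1})\to K_n(X)\to H^0(X_\sub{Zar},\cal K_n)\to 0,\]
as desired. (One can also read this as the degeneration of the two-column descent spectral sequence forced by $X$ being one-dimensional, but no separate argument is needed.)

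The remaining step is naturality in $X$: the map $d_n$ is natural for quasi-separated morphisms between such schemes, the adelic resolution of proposition \ref{proposition_ses_for_Zariski_cohomology} is built functorially out of the functors $i_{x*}i_x^*$ together with the localisation data $K_n(\roi_{X,x})\to K_n(\Frac\roi_{X,x})$, and the Mayer--Vietoris sequence of proposition \ref{proposition_MV_sequence_in_Zariski_top} is likewise natural; hence the identifications in corollary \ref{corollary_ker_coker_description_of_K_groups} are natural, and so is the displayed sequence. There is no genuine obstacle here --- the substance lay in the Thomason--Trobaugh localisation theorem and in the adelic flasque resolution, both already in hand. The only thing one must actually verify is that the image and the kernel being substituted are those of one and the same homomorphism $d_n$, which is precisely the form in which corollary \ref{corollary_ker_coker_description_of_K_groups} was stated.
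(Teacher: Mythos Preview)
Your argument is correct and is precisely the paper's approach: the paper's proof is the single line ``This is immediate from the previous corollary,'' which unpacks to exactly the observation you made, that $H^0(X_\sub{Zar},\cal K_n)=\Im d_n$ and $H^1(X_\sub{Zar},\cal K_{n+1})=\ker d_n$ for the \emph{same} map $d_n$, whence the tautological short exact sequence $0\to\ker d_n\to K_n(X)\to\Im d_n\to 0$ gives the result.
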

\begin{proof}
This is immediate from the previous corollary.
\end{proof}

\subsection{Relation to descent}\label{subsection_Zariski_descent}
$X$ continues to be a one-dimensional, quasi-separated, Noetherian scheme. Let $E$ be a presheaf of spectra on $X_\sub{Zar}$, with associated abelian presheaves $E_n=\pi_n(E)$; let $\cal E_n$ be the sheafification of $E_n$. We say that $E$ satisfies descent when it transforms cartesian squares \xysquare{U\cup V}{U}{V}{U\cap V}{->}{->}{->}{->} of open subsets of $X$ to homotopy cartesian squares of spectra, which is (more or less) equivalent to the existence of a descent spectral sequence \[E_2^{p,q}=H^p(X,\cal E_q)\Longrightarrow E_{p-q}(X).\] Since the Zariski site of $X$ has cohomological dimension one, this is the assertion that there is a natural exact sequence \[0\to H^1(X,\cal E_{n+1})\to E_n(X)\to H^0(X,\cal E_n)\to 0.\]

This yields a reinterpretation of the calculations of the previous subsection:

\begin{proposition}
Let $X$ be a one-dimensional, Noetherian scheme, and let $E$ be a presheaf of spectra on $X$. Then the following are equivalent:
\begin{enumerate}
\item $E$ satisfies descent (more precisely, there is a short exact sequence as immediately above);
\item There exists a long exact Mayer-Vietoris sequence \[\cdots\to E_n(X)\to\prod_{x\in X^1}E_{n,x}\oplus\prod_{y\in X^0}E_{n,y}\to\rprod_{x\in X^1}E_{n,x}^\circ\to\cdots\] as in proposition \ref{proposition_MV_sequence_in_Zariski_top}.
\item For $n\ge 0$, the cohomology groups $H^0(X_\sub{Zar},\cal E_n)$ and $H^1(X_\sub{Zar},\cal E_{n+1})$ are equal to the image and kernel, respectively, of the diagonal map \[E_n(X)\to \prod_{x\in X^1}E_{n,x}\oplus \prod_{y\in X^0}E_{n,y}.\]
\end{enumerate}
\end{proposition}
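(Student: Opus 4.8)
The plan is to deduce all three equivalences formally from proposition \ref{proposition_ses_for_Zariski_cohomology}. Write $\cal E_n$ for the Zariski sheafification of the presheaf $E_n$; since sheafification does not alter stalks we have $\cal E_{n,x}=E_{n,x}$ and $\cal E_{n,x}^\circ=E_{n,x}^\circ$ at every point, so that the restricted product occurring in (ii)--(iii) is literally the one appearing in the adelic complex for $\cal E_n$. Thus, \emph{unconditionally}, unwinding proposition \ref{proposition_ses_for_Zariski_cohomology} presents $H^0(X_\sub{Zar},\cal E_n)$ and $H^1(X_\sub{Zar},\cal E_n)$ as the kernel and cokernel of a map
\[\rho_n\colon\ \prod_{x\in X^1}E_{n,x}\oplus\prod_{y\in X^0}E_{n,y}\ \To\ \rprod_{x\in X^1}E_{n,x}^\circ,\]
i.e.\ there is an exact sequence $0\to H^0(X_\sub{Zar},\cal E_n)\to\prod_x E_{n,x}\oplus\prod_y E_{n,y}\xto{\rho_n}\rprod_x E_{n,x}^\circ\to H^1(X_\sub{Zar},\cal E_n)\to 0$, which I will call the \emph{backbone}. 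I also record that a section of a sheaf is determined by its stalks, so $H^0(X_\sub{Zar},\cal E_n)\into\prod_x\cal E_{n,x}$ and the diagonal map $\delta_n$ of (iii) factors as $E_n(X)\to H^0(X_\sub{Zar},\cal E_n)\into\prod_x\cal E_{n,x}$, the first arrow being the canonical presheaf-to-sheaf comparison.

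First I would prove (i)$\Leftrightarrow$(iii). From the factorization just noted, $\Im\delta_n=H^0(X_\sub{Zar},\cal E_n)$ holds precisely when $E_n(X)\to H^0(X_\sub{Zar},\cal E_n)$ is surjective, while $\ker\delta_n=\ker(E_n(X)\to H^0(X_\sub{Zar},\cal E_n))$ always; so (iii) is exactly the assertion that $E_n(X)\to H^0(X_\sub{Zar},\cal E_n)$ is onto with kernel (naturally identified with) $H^1(X_\sub{Zar},\cal E_{n+1})$, which is the short exact sequence of (i). Next, (ii)$\Rightarrow$(iii): splice the Mayer--Vietoris sequence of proposition \ref{proposition_MV_sequence_in_Zariski_top} against the backbone. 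Exactness of (ii) at its middle term gives $\Im\delta_n=\ker\rho_n=H^0(X_\sub{Zar},\cal E_n)$; exactness of (ii) at $E_n(X)$ and at $\rprod_x E_{n+1,x}^\circ$ together identify $\ker\delta_n$ with $\rprod_x E_{n+1,x}^\circ/\Im\rho_{n+1}=H^1(X_\sub{Zar},\cal E_{n+1})$.

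It remains to do (i)$\Rightarrow$(ii) by building the long exact sequence by hand. Define a boundary map $\rprod_x E_{n,x}^\circ\onto H^1(X_\sub{Zar},\cal E_n)\into E_{n-1}(X)$ as the composite of the surjection coming from the backbone with the injection coming from the descent sequence of (i), and splice the backbone sequences and the descent short exact sequences along these maps; exactness at each of the three kinds of terms is then a routine diagram chase using only the two families of exact sequences in hand. The entire argument is formal -- no localisation theorem or other geometric input beyond proposition \ref{proposition_ses_for_Zariski_cohomology} is used, which is precisely why it applies to an arbitrary presheaf of spectra. The one place I expect to have to be careful is the compatibility of the various canonical maps -- presheaf $\to$ sheaf on global sections, the stalk identifications $\cal E_{n,x}=E_{n,x}$, and the two a priori different descriptions of the boundary map -- so that every identification above is genuinely natural rather than merely an abstract isomorphism; once those compatibilities are pinned down, nothing else is more than bookkeeping.
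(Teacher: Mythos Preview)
Your proposal is correct and follows essentially the same approach as the paper: both arguments rest entirely on the ``backbone'' exact sequence coming from proposition~\ref{proposition_ses_for_Zariski_cohomology} and then splice it against the descent short exact sequence to pass between (i), (ii), and (iii). The paper organises the cycle as (i)$\Rightarrow$(ii)$\Rightarrow$(iii)$\Rightarrow$(i) (displaying (i)$\Rightarrow$(ii) as a two-row diagram chase and invoking corollary~\ref{corollary_ker_coker_description_of_K_groups} for (ii)$\Rightarrow$(iii)), whereas you do (i)$\Leftrightarrow$(iii) directly and then (ii)$\Rightarrow$(iii) and (i)$\Rightarrow$(ii); but the content and the key observation---that everything is formal once one has the backbone---are identical.
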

\begin{proof}
Proposition \ref{proposition_ses_for_Zariski_cohomology} tells us that there is an exact sequence \[0\to H^0(X,\cal E_n)\to\prod_{x\in X^1}E_{n,x}\oplus\prod_{y\in X^0}E_{n,y}\to\rprod_{x\in X^1} E_{n,x}^\circ\to H^1(X,\cal E_n)\to 0\] for any $n\ge 0$. The equivalence of conditions (i)--(iii) follows in an elementary way from this by splicing and unravelling exact sequences; we only sketch the details, abbreviating notation a little to save space:

(i)$\Rightarrow$(ii) Assuming (i), we have a commutative diagram with exact rows:
\[\hspace{-14mm}\xymatrix@=3mm{
\ar@{..>}[r]&H^1(\cal E_{n+1})\ar[r]\ar[d]&0\ar[r]\ar[d] &H^0(\cal E_n)\ar[r]\ar[d]& \prod_xE_{n,x}\oplus\prod_yE_{n,y}\ar[r]\ar[d]&\rprod_{x} E_{n,x}^\circ\ar[r]\ar[d]& H^1(\cal E_n)\ar[r]\ar[d]& 0\ar[r]\ar[d]& H^0(\cal E_{n-1})\ar@{..>}[r]\ar[d]&\\
\ar@{..>}[r]&H^1(\cal E_{n+1})\ar[r]&E_n(X)\ar[r]&H^0(\cal E_n)\ar[r]&0\ar[r]&0\ar[r]&H^1(\cal E_n)\ar[r]&E_{n-1}(X)\ar[r]&H^0(\cal E_{n-1})\ar@{..>}[r]&
}\]
The exact sequence (ii) follows by a standard Mayer-Vietoris type diagram chase.

(ii)$\Rightarrow$(iii) follows exactly as in corollary \ref{corollary_ker_coker_description_of_K_groups}. (iii)$\Rightarrow$(i) is trivial.
\end{proof}

\section{The Nisnevich cohomology of $K$-theory}\label{section_Nisnevich_case}
Now we repeat all the constructions above in the Nisnevich topology; apart from several steps, the arguments are exactly the same and so we do not linger. Then finally we reach section \ref{subsection_Nisnevich_cohomology_via_adeles}, where it is shown that the arguments can be modified again to give a description of the Nisnevich cohomology of sheafified $K$-theory using our completed $K$-groups from the first part of the paper. The validity of this condition is dependent on conjecture 1 being satisfied.

$X$ is again a one-dimensional, Noetherian scheme.

\subsection{Henselian adeles for curves}\label{subsection_Henselian_adeles}
Here we describe the Henselian adeles, which provide a functorial resolution for any abelian sheaf $\cal F$ on $X_\sub{Nis}$.

As before, given a morphism $f:W\to X$, we denote by $f^*$ and $f_*$ the adjoint pair of functors on abelian sheaves, this time in the Nisnevich topology:
\[\xymatrix@=1cm{Ab(W_\sub{Nis})\ar@/^5mm/[r]^{f_*}&Ab(X_\sub{Nis})\ar@/^5mm/[l]^{f^*}}.\] Let the functor $\bb{A}_1^\sub{Nis}:Ab(X_\sub{Nis})\to Ab(X_\sub{Nis})$ be defined as before: $\bb{A}_1^\sub{Nis}(\cal F)=\prod_{x\in X^1}i_{x*}i_x^*(\cal F)$, where $i_x:\Spec k(x)\into X$ is the natural map.

Exactly imitating the arguments in the Zariski case shows that we have a bicartesian square of Nisnevich sheaves
\[\xymatrix{
\cal F\ar[r]\ar[d]&\prod_{y\in X^0}i_{y*}i_y^*(\cal F)\ar[d]\\
\bb{A}_1^\sub{Nis}(\cal F)\ar[r]&\indlim_V\bb{A}_1^\sub{Nis}(\cal F_V)
}\]
where $V$ runs over all \'etale opens of $X$ such that $X^0\subseteq\op{cd}(V/X)$ ($=\{x\in X:\exists\, x'\in V\mbox{ s.t. }x'\mapsto x\mbox{ and }k(x)\isoto k(x')\}$), and where $\cal F_V$ is defined as in the Zariski case: $\cal F_V=j_{V*}j_V^*(\cal F)$, with $j_V:V\to X$ being the natural inclusion.

Moreover, note that any \'etale cover $V\to X$ such that $X^0\subseteq\op{cd}(V/X)$ may be refined to a Zariski cover $U\to X$ with the same property; i.e., such that $U$ is dense. It then follows from the same argument as in the Zariski case that \[\bb A_1^\sub{Nis}(\cal F_U)=\prod_{x\in U^1}\cal F_x\times\prod_{x\in X\setminus U}\cal F_x^\circ,\] where $\cal F_x^\circ$ is now the global sections of $\cal F_x$ on the punctured Henselian spectrum $\Spec\roi_{X,x}^h\setminus\{x\}$. Therefore \[\indlim_V\bb{A}_1^\sub{Nis}(\cal F_V)=\rprod_{x\in X^1}\cal F_x^\circ.\]

\begin{example}
If $X$ is the spectrum of the ring of integers of a number field, or a smooth projective curve over a field, and $\cal G=\roi_X$ (as a Nisnevich sheaf), then $\cal G_x^\circ=\Frac\roi_{X,x}^h$ and $\rprod_{x\in X^1}\Frac\roi_{X,x}$ is the usual ring of (finite) Henselian ad\`eles.
\end{example}

Taking cohomology we obtain the analogue of proposition \ref{proposition_ses_for_Zariski_cohomology}:

\begin{proposition}\label{proposition_ses_for_Nis}
Let $\cal F$ be an abelian sheaf on $X_\sub{Nis}$. Then there are natural isomorphisms \[H^*\left(0\to\prod_{x\in X^1}\cal F_x\oplus\prod_{y\in X^0}\cal F_y\to\rprod_{x\in X^1}\cal F_x^\circ\to0\right)\cong H^*(X_\sub{Nis},\cal F).\]
\end{proposition}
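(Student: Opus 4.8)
The plan is to mimic the proof of proposition \ref{proposition_ses_for_Zariski_cohomology} almost verbatim: take Nisnevich cohomology of the bicartesian square of sheaves displayed immediately above, once one checks that its three corners other than $\cal F$ are $\Gamma(X_\sub{Nis},-)$-acyclic. Since $Ab(X_\sub{Nis})$ is abelian, that bicartesian square is the same datum as a short exact sequence of Nisnevich sheaves
\[0\To\cal F\To\bb A_1^\sub{Nis}(\cal F)\oplus\prod_{y\in X^0}i_{y*}i_y^*(\cal F)\To\indlim_V\bb A_1^\sub{Nis}(\cal F_V)\To0,\]
and we have already identified $\indlim_V\bb A_1^\sub{Nis}(\cal F_V)=\rprod_{x\in X^1}\cal F_x^\circ$. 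One then records the global sections of the two right-hand terms: since $\Gamma(X_\sub{Nis},i_{x*}i_x^*\cal F)=\Gamma((\Spec k(x))_\sub{Nis},i_x^*\cal F)=\cal F_x$, and similarly over the generic points, the middle term has global sections $\prod_{x\in X^1}\cal F_x\oplus\prod_{y\in X^0}\cal F_y$, while the right-hand term has global sections $\rprod_{x\in X^1}\cal F_x^\circ$ by the definition of the restricted product.

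Granted the acyclicity, $R\Gamma(X_\sub{Nis},\cal F)$ is then computed by the length-one complex obtained by applying $\Gamma(X_\sub{Nis},-)$ to that short exact sequence, namely
\[0\To\prod_{x\in X^1}\cal F_x\oplus\prod_{y\in X^0}\cal F_y\To\rprod_{x\in X^1}\cal F_x^\circ\To0,\]
and taking its cohomology yields the asserted natural isomorphism with $H^*(X_\sub{Nis},\cal F)$; this is the Nisnevich counterpart of proposition \ref{proposition_ses_for_Zariski_cohomology}, proved in the Zariski case by the same one-line argument applied to lemma \ref{lemma_bicartesian_for_Zariski}.

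It remains to prove the acyclicity, and here I would argue exactly as in the Zariski case (properties (i)--(iii) before lemma \ref{lemma_bicartesian_for_Zariski}, together with the flasqueness statement in that lemma), showing that $\bb A_1^\sub{Nis}(\cal F)$, the sheaf $\prod_{y\in X^0}i_{y*}i_y^*(\cal F)$, and each $\bb A_1^\sub{Nis}(\cal F_V)$ are flasque Nisnevich sheaves. Concretely: for $x\in X^1$ the inclusion $i_x\colon\Spec k(x)\into X$ is a closed immersion, a codimension-one point of the one-dimensional scheme $X$ being closed, so $i_{x*}$ is exact and preserves injectives; moreover the small Nisnevich site of the field $k(x)$ has cohomological dimension zero, as every Nisnevich cover of $\Spec k(x)$ admits a section. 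Hence $i_{x*}i_x^*\cal F$ has vanishing higher Nisnevich cohomology, and a product of such sheaves does too, which settles $\bb A_1^\sub{Nis}(\cal F)$. The same reasoning handles the factors over $y\in X^0$, pushforward of a sheaf from a point being again flasque; and $\rprod_{x\in X^1}\cal F_x^\circ=\indlim_V\bb A_1^\sub{Nis}(\cal F_V)$ is a filtered colimit of flasque sheaves, hence flasque and $\Gamma$-acyclic, using that $X$ is Noetherian of finite Krull dimension so that Nisnevich cohomology commutes with the colimit.

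The one place where something more than a transcription of the Zariski argument is required---and hence the main obstacle---is exactly this acyclicity input in the Nisnevich topology: that pushforward along the point maps $i_x$ and $i_y$ has no higher Nisnevich cohomology, and that $\Gamma(X_\sub{Nis},-)$ commutes with the filtered colimit over the \'etale opens $V$. Both are standard facts about the Nisnevich site of a Noetherian, finite-dimensional scheme, but they should be invoked explicitly here, whereas the Zariski proof got them for free from the skyscraper-like description of $i_{x*}$.
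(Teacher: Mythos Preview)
Your proof is correct and follows exactly the approach the paper intends: the paper's argument is literally the single line ``Taking cohomology we obtain the analogue of proposition \ref{proposition_ses_for_Zariski_cohomology}'' applied to the bicartesian square displayed in section \ref{subsection_Henselian_adeles}, and you have simply supplied the Nisnevich-specific acyclicity checks (triviality of the Nisnevich site of a field, exactness of $i_{x*}$ and $i_{y*}$, and commutation of Nisnevich cohomology with filtered colimits on a Noetherian scheme) that the paper leaves implicit.
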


\subsection{The Nisnevich long exact sequence for $K$-theory}\label{subsection_Nisnevich_les}
Assume further that $X$ is quasi-separated. We showed in proposition \ref{proposition_MV_sequence_in_Zariski_top} that there is a natural long exact Mayer-Vietoris sequence: \[\cdots\to K_n(X)\to\prod_{x\in X^1}K_n(\roi_{X,x})\oplus\prod_{y\in X^0}K_n(\roi_{X,y})\to\rprod_{x\in X^1}K_n(\roi_{X,y})\to\cdots\] We now wish to replace the local rings $\roi_{X,x}$ in this diagram by their Henselizations.

Given $x\in X^1$, the homomorphism $\roi_{X,x}\to\roi_{X,x}^h$ is an isomorphism infinitely near the maximal ideals (in the sense of Thomason-Trobaugh), so it gives rise to a long exact Mayer-Vietoris sequence: \[\cdots\to K_n(\roi_{X,x})\to K_n(\roi_{X,x}^h)\oplus K_n(\Frac\roi_{X,x})\to K_n(\Frac\roi_{X,x}^h)\to\cdots\] This may be spliced with the long exact sequence immediately above (or the arguments in the previous section may be repeated verbatim) to give a long exact sequence \[\cdots\to K_n(X)\to\prod_{x\in X^1}K_n(\roi_{X,x}^h)\oplus\prod_{y\in X^0}K_n(\roi_{X,y})\to\rprod_{x\in X^1}K_n(\Frac\roi_{X,x}^h)\to\cdots\]

We may now easily deduce the Nisnevich analogue of corollary \ref{corollary_ker_coker_description_of_K_groups}:

\begin{corollary}\label{corollary_ker_coker_description_of_K_groups_Nis}
$X$ a one-dimensional, quasi-separated, Noetherian scheme. Then $H^0(X_\sub{Nis},\cal K_n)$ and $H^1(X_\sub{Nis},\cal K_{n+1})$ are equal to the image and kernel, respectively, of the diagonal map \[K_n(X)\to \prod_{x\in X^1}K_n(\roi_{X,x}^h)\oplus \prod_{y\in X^0}K_n(\roi_{X,y}).\]
\end{corollary}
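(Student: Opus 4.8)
The plan is to repeat the proof of corollary \ref{corollary_ker_coker_description_of_K_groups} almost verbatim, replacing the Zariski inputs by the Nisnevich ones established in subsections \ref{subsection_Henselian_adeles} and \ref{subsection_Nisnevich_les}. First I would apply proposition \ref{proposition_ses_for_Nis} to the Nisnevich sheaf $\cal F=\cal K_n$, obtaining a natural exact sequence
\[0\to H^0(X_\sub{Nis},\cal K_n)\to\prod_{x\in X^1}(\cal K_n)_x\oplus\prod_{y\in X^0}(\cal K_n)_y\to\rprod_{x\in X^1}(\cal K_n)_x^\circ\to H^1(X_\sub{Nis},\cal K_n)\to 0.\]
Since $K$-theory commutes with filtered inductive limits of rings, the Nisnevich stalk $(\cal K_n)_x$ is $K_n(\roi_{X,x}^h)$ for $x\in X^1$ and $K_n(\roi_{X,y})$ for $y\in X^0$ (the local ring at a codimension-zero point is Artinian, hence already Henselian), and by the description of the punctured Henselian spectrum recalled in subsection \ref{subsection_Henselian_adeles} one has $(\cal K_n)_x^\circ=K_n(\Frac\roi_{X,x}^h)$. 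Reading off the left-hand half of the displayed sequence gives the asserted description of $H^0(X_\sub{Nis},\cal K_n)$ as the image of the diagonal map.

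For $H^1(X_\sub{Nis},\cal K_{n+1})$, I would splice the above four-term exact sequence, with $n$ replaced by $n+1$, with the Nisnevich long exact Mayer-Vietoris sequence of subsection \ref{subsection_Nisnevich_les},
\[\cdots\to K_{n+1}(X)\to\prod_{x\in X^1}K_{n+1}(\roi_{X,x}^h)\oplus\prod_{y\in X^0}K_{n+1}(\roi_{X,y})\to\rprod_{x\in X^1}K_{n+1}(\Frac\roi_{X,x}^h)\to K_n(X)\to\cdots,\]
exactly as in the proof of corollary \ref{corollary_ker_coker_description_of_K_groups}: the restricted-product term is common to the two sequences, and comparing them identifies the kernel of the diagonal map $K_n(X)\to\prod_{x\in X^1}K_n(\roi_{X,x}^h)\oplus\prod_{y\in X^0}K_n(\roi_{X,y})$ with the cokernel of $\prod_{x}K_{n+1}(\roi_{X,x}^h)\oplus\prod_{y}K_{n+1}(\roi_{X,y})\to\rprod_{x}K_{n+1}(\Frac\roi_{X,x}^h)$, which by the four-term sequence is precisely $H^1(X_\sub{Nis},\cal K_{n+1})$.

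The only point requiring genuine care, and the step I expect to be the main obstacle, is checking that the restricted-product term $\rprod_{x\in X^1}(\cal K_n)_x^\circ$ appearing in proposition \ref{proposition_ses_for_Nis} really does coincide, compatibly with the relevant maps, with the term $\rprod_{x\in X^1}K_n(\Frac\roi_{X,x}^h)$ occurring in the Mayer-Vietoris sequence of subsection \ref{subsection_Nisnevich_les}; that is, that the sheaf-theoretic punctured-disk sections agree with the $K$-theory of the Henselian fraction ring. This amounts to the continuity of $K$-theory under the filtered colimits defining the Henselizations, together with the Thomason--Trobaugh excision and localisation bookkeeping already used to construct that Mayer-Vietoris sequence. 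Once this matching is in place, everything else is the same formal diagram chase as in the Zariski case, and no new ideas are needed.
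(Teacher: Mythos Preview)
Your proposal is correct and follows exactly the paper's approach: apply proposition \ref{proposition_ses_for_Nis} with $\cal F=\cal K_n$ to get the four-term exact sequence, identify the Nisnevich stalks as Henselian $K$-groups, and splice with the Nisnevich Mayer--Vietoris sequence of subsection \ref{subsection_Nisnevich_les}. One small expository point: the four-term sequence alone identifies $H^0$ only as the \emph{kernel} of $\prod_x K_n(\roi_{X,x}^h)\oplus\prod_y K_n(\roi_{X,y})\to\rprod_x K_n(\Frac\roi_{X,x}^h)$, and you still need the Mayer--Vietoris sequence (which you already have) to equate that kernel with the image of $K_n(X)$; but all ingredients are present and the ``main obstacle'' you flag is already absorbed into the setup of subsection \ref{subsection_Henselian_adeles}.
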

\begin{proof}
This follows exactly as it did in the Zariski setting since proposition \ref{proposition_ses_for_Nis} provides us with exact sequences
\[\hspace{-10mm}0\to H^0(X_\sub{Nis},\cal K_n)\to\prod_{x\in X^1} K_n(\roi_{X,x}^h)\oplus\prod_{y\in X^0}K_n(\roi_{X,y})\to\rprod_{x\in X^1} K_n(\Frac\roi_{X,x}^h)\to H^1(X_\sub{Nis},\cal K_n)\to 0\] and since we just established the Nisnevich analogue of proposition  \ref{proposition_MV_sequence_in_Zariski_top}.
\end{proof}

\subsection{Relation to descent}
Section \ref{subsection_Zariski_descent} has an obvious Nisnevich modification.

\subsection{Nisnevich cohomology via $K$-theory of completions}\label{subsection_Nisnevich_via_completions}
$X$ continues to be a one-dimensional, quasi-separated, Noetherian scheme. Here we explain that one can always replace $K_n(\roi_{X,x}^h)$ by $K_n(\hat \roi_{X,x})$ in expressions for the Nisnevich cohomology of $K$-theory.

\begin{lemma}\label{lemma_henselian_to_complete}
The diagram
\xysquare{\prod_{x\in X^1}K_n(\roi_{X,x}^h)\oplus\prod_{y\in X^0}K_n(\roi_{X,y})}{\rprod_{x\in X^1}K_n(\Frac\roi_{X,x}^h)}{\prod_{x\in X^1}K_n(\hat \roi_{X,x})\oplus\prod_{y\in X^0}K_n(\roi_{X,y})}{\rprod_{x\in X^1}K_n(\Frac\hat\roi_{X,x})}{->}{^(->}{^(->}{->}
is bicartesian with injective vertical arrows.
\end{lemma}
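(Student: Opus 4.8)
The plan is to reduce the whole statement to the local comparison already recorded in Corollary~\ref{corollary_equivalence_of_completed_and_Henselian}, and then to use that in the category of abelian groups both arbitrary products and filtered colimits are exact, hence preserve injections and the bicartesian property.

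First I would fix $x\in X^1$ and apply Corollary~\ref{corollary_equivalence_of_completed_and_Henselian} to $A=\roi_{X,x}^h$, which is a one-dimensional, Henselian local ring and is excellent, being a Henselization of $\roi_{X,x}$; its completion is $\hat A=\hat\roi_{X,x}$ (the completion of the Henselization equals the completion), so the corollary says precisely that
\xysquare{K_n(\roi_{X,x}^h)}{K_n(\Frac\roi_{X,x}^h)}{K_n(\hat\roi_{X,x})}{K_n(\Frac\hat\roi_{X,x})}{->}{^(->}{^(->}{->}
is bicartesian with injective vertical arrows.

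Next I would unwind the restricted products via Definition~\ref{definition_restricted_prod}: \[\rprod_{x\in X^1}K_n(\Frac\roi_{X,x}^h)=\indlim_V\Big(\prod_{x\in V^1}K_n(\roi_{X,x}^h)\times\prod_{x\in X\setminus V}K_n(\Frac\roi_{X,x}^h)\Big),\] where $V$ runs over the dense open subsets of $X$ (so $X\setminus V\subseteq X^1$), and likewise with completions; the right-hand vertical map of the lemma is obtained by applying $K_n$ of the completion homomorphisms levelwise. Fix $V$ and ignore for the moment the direct summand $\prod_{y\in X^0}K_n(\roi_{X,y})$: the square of the lemma is then literally the product, over $x\in V^1$, of the degenerate squares with identity horizontal maps and common vertical map $K_n(\roi_{X,x}^h)\to K_n(\hat\roi_{X,x})$, together with the product, over $x\in X\setminus V$, of the squares of the previous step. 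Each factor is bicartesian with injective verticals, hence so is the product, and the filtered colimit over $V$ preserves this. Thus the lemma holds once the $X^0$-summand is removed.

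Finally I would reinstate $\prod_{y\in X^0}K_n(\roi_{X,y})$. It occurs unchanged in the two left-hand corners, with the identity as its vertical map, and -- since $\Frac$ of a semilocal ring is the product of its localisations at the minimal primes, so that the map $\prod_{y\in X^0}K_n(\roi_{X,y})\to\rprod_{x\in X^1}K_n(\Frac\roi_{X,x}^h)$ is assembled from the evident (hence natural) ring maps $\roi_{X,y}\to\Frac\roi_{X,x}^h$ for $y>x$ -- its contributions to the top and bottom horizontal maps correspond under the vertical maps. Adjoining to the left-hand column of a bicartesian square a summand mapping compatibly into both horizontal targets again yields a bicartesian square and does not affect injectivity of the verticals; equivalently, the fibre product of the enlarged cospan is the fibre product of the original cospan direct-summed with the new group. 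This gives the lemma. The one step that requires genuine care rather than formal manipulation is this last one, namely checking that the $\prod_{y\in X^0}$-contribution to the Mayer--Vietoris-type horizontal maps is really unchanged upon completing the local rings; this is, however, forced by naturality once the restricted products and the maps have been set up functorially.
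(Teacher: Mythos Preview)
Your proof is correct and follows essentially the same route as the paper: reduce to the pointwise bicartesian squares of Corollary~\ref{corollary_equivalence_of_completed_and_Henselian}, take products over $x\in X\setminus V$ together with the degenerate squares for $x\in V^1$, and pass to the filtered colimit over dense opens $V$. You are more careful than the paper about reinstating the $\prod_{y\in X^0}K_n(\roi_{X,y})$ summand, which the paper leaves implicit, but this is a harmless elaboration rather than a different idea.
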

\begin{proof}
Corollary \ref{corollary_equivalence_of_completed_and_Henselian} shows that if $x\in X^1$ then the diagram \xysquare{K_n(\roi_{X,x}^h)}{K_n(\Frac\roi_{X,x}^h)}{K_n(\hat \roi_{X,x})}{K_n(\Frac\hat\roi_{X,x})}{->}{^(->}{^(->}{->} is bicartesian with injective vertical arrows. For $V\subseteq X$ a dense open, we take $\prod_{x\in X\setminus V}$ of these diagrams to obtain a new bicartesian diagram; then apply $\times\prod_{x\in V^1}K_n(\roi_{X,x}^h)$ to the top row of the diagram and $\times\prod_{x\in V^1}K_n(\hat\roi_{X,x})$ to the bottom of the diagram. Finally take $\indlim_V$.
\end{proof}

It follows at once from the lemma that we may replace each $\roi_{X,x}^h$ by $\hat\roi_{X,x}$ in both the long exact Mayer-Vietoris sequence appearing immediately before the proof of corollary \ref{corollary_ker_coker_description_of_K_groups_Nis} and in the corollary itself:
\[\cdots\to K_n(X)\to\prod_{x\in X^1}K_n(\hat\roi_{X,x})\oplus\prod_{y\in X^0}K_n(\roi_{X,y})\to\rprod_{x\in X^1}K_n(\Frac\hat\roi_{X,x})\to\cdots\]
and
\[H^*\left(0\to\prod_{x\in X^1} K_n(\hat\roi_{X,x})\oplus\prod_{y\in X^0}K_n(\roi_{X,y})\to\rprod_{x\in X^1} K_n(\Frac\hat\roi_{X,x})\to 0\right) \cong H^*(X_\sub{Nis},\cal K_n)\]
Therefore $H^0(X_\sub{Nis},\cal K_n)$ and $H^1(X_\sub{Nis},\cal K_{n+1})$ are equal to the image and kernel, respectively, of the diagonal map \[K_n(X)\to \prod_{x\in X^1}K_n(\hat\roi_{X,x})\oplus \prod_{y\in X^0}K_n(\roi_{X,y}).\]

\subsection{Nisnevich cohomology via completed $K$-groups}\label{subsection_Nisnevich_cohomology_via_adeles}
$X$ continues to be a one-dimensional, quasi-separated, Noetherian scheme, but we now assume further that the completion of $\roi_{X,x}$ satisfies conjecture 1 for all $x\in X^1$ (or, equivalently, that its Henselization satisfies conjecture 1'). For example, theorem \ref{theorem_char_zero_local_result} implies that $X$ is allowed to be a reduced curve over a characteristic zero field.

Then the square \xysquare{K_n(\roi_{X,x}^h)}{K_n(\Frac\roi_{X,x}^h)}{\hat K_n(\roi_{X,x})}{\hat K_n(\Frac\roi_{X,x})}{->}{->}{->}{->} is bicartesian for each $x\in X^1$. The arguments of section \ref{subsection_Nisnevich_via_completions} may then be repeated verbatim (the injectivity of the vertical arrows in lemma \ref{lemma_henselian_to_complete} was of no importance) to obtain our main theorem on calculating the cohomology of $K$-theory adelically:

\begin{theorem}\label{theorem_main}
Let $X$ be a one-dimensional, quasi-separated, Noetherian scheme such that $\hat\roi_{X,x}$ satisfies conjecture $1$ for all $x\in X^1$. Then there is a long exact Mayer-Vietoris sequence \[\cdots\to K_n(X)\to\prod_{x\in X^1}\hat K_n(X)\oplus \prod_{y\in X^0} K_n(\roi_{X,y})\to\rprod_{x\in X^1}\hat K_n(\Frac\roi_{X,x})\to\cdots\]
and natural isomorphisms\[H^*\left(0\to \prod_{x\in X^1} \hat K_n(\roi_{X,x})\oplus\prod_{y\in X^0}K_n(\roi_{X,y})\to\rprod_{x\in X^1} \hat K_n(\Frac\roi_{X,x})\to 0\right)\cong H^*(X_\sub{Nis},\cal K_n).\] Therefore $H^0(X_\sub{Nis},\cal K_n)$ and $H^1(X_\sub{Nis},\cal K_{n+1})$ are equal to the image and kernel, respectively, of the diagonal map \[K_n(X)\to \prod_{x\in X^1}\hat K_n(\roi_{X,x})\oplus \prod_{y\in X^0}K_n(\roi_{X,y}).\]
\end{theorem}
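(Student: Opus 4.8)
The plan is to establish the theorem by reducing it to a single bicartesian square at each $x\in X^1$ and then running the adelic-limit argument of section~\ref{subsection_Nisnevich_via_completions} unchanged --- as the paragraph just before the statement indicates, the injectivity of the vertical arrows used in lemma~\ref{lemma_henselian_to_complete} is now irrelevant, so only the \emph{shape} of that argument is needed. First I would isolate the local input: fix $x\in X^1$ and put $A=\roi_{X,x}$, so $A^h=\roi_{X,x}^h$ and $\hat{A^h}=\hat A=\hat\roi_{X,x}$. Corollary~\ref{corollary_equivalence_of_completed_and_Henselian}, applied at $x$ exactly as it was in lemma~\ref{lemma_henselian_to_complete}, provides a bicartesian square with vertices $K_n(A^h),K_n(\Frac A^h),K_n(\hat A),K_n(\Frac\hat A)$; meanwhile the pushout square defining $\hat K_n(\Frac\roi_{X,x})$ --- the one with vertices $K_n(\hat A),K_n(\Frac\hat A),\hat K_n(\roi_{X,x}),\hat K_n(\Frac\roi_{X,x})$ --- is cocartesian by construction and is upgraded to bicartesian by the hypothesis that $\hat A=\hat\roi_{X,x}$ satisfies conjecture~$1$ (compare proposition~\ref{proposition_MV_sequence_for_K_hat}). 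Pasting these two bicartesian squares along their common edge $K_n(\hat A)\to K_n(\Frac\hat A)$ yields the bicartesian square
\xysquare{K_n(\roi_{X,x}^h)}{K_n(\Frac\roi_{X,x}^h)}{\hat K_n(\roi_{X,x})}{\hat K_n(\Frac\roi_{X,x})}{->}{->}{->}{->}
for every $x\in X^1$ --- precisely the square displayed immediately above the theorem.

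Next I would globalise, exactly as in lemma~\ref{lemma_henselian_to_complete}. For a dense open $V\subseteq X$, take $\prod_{x\in X\setminus V}$ of these squares (a product of bicartesian squares of abelian groups is bicartesian), adjoin the harmless factor $\prod_{x\in V^1}K_n(\roi_{X,x}^h)$, resp.\ $\prod_{x\in V^1}\hat K_n(\roi_{X,x})$, to the top, resp.\ bottom, row, add $\prod_{y\in X^0}K_n(\roi_{X,y})$ to each left-hand vertex, and pass to $\indlim_V$; filtered colimits of abelian groups are exact, and by definition~\ref{definition_restricted_prod} this colimit assembles the restricted products. The result is a bicartesian square whose top row is the adelic differential $\prod_{x\in X^1}K_n(\roi_{X,x}^h)\oplus\prod_{y\in X^0}K_n(\roi_{X,y})\to\rprod_{x\in X^1}K_n(\Frac\roi_{X,x}^h)$ and whose bottom row is the analogous map with the codimension-one groups replaced by $\hat K_n(\roi_{X,x})$ and $\hat K_n(\Frac\roi_{X,x})$; regard its two vertical arrows as a morphism $C_h\to C_c$ of two-term complexes in degrees $0$ and $1$. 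Being bicartesian says precisely that the mapping cone of $C_h\to C_c$ is acyclic, i.e.\ that $C_h\to C_c$ is a quasi-isomorphism.

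To conclude: proposition~\ref{proposition_ses_for_Nis} applied to $\cal F=\cal K_n$ --- whose Nisnevich stalk at $x\in X^1$ is $K_n(\roi_{X,x}^h)$ and at $y\in X^0$ is $K_n(\roi_{X,y})$ --- identifies $H^i(C_h)$ with $H^i(X_\sub{Nis},\cal K_n)$ for $i=0,1$, so the quasi-isomorphism $C_h\to C_c$ gives the stated computation of $H^*(X_\sub{Nis},\cal K_n)$. For the long exact Mayer--Vietoris sequence, I would splice the unconditional Nisnevich Mayer--Vietoris sequence of section~\ref{subsection_Nisnevich_les}, which expresses $K_*(X)$ through the Henselian adelic terms, with the globalised bicartesian square, by the same diagram chase that proves proposition~\ref{proposition_MV_sequence_for_K_hat} (this is the verbatim repetition of section~\ref{subsection_Nisnevich_via_completions}); this replaces the Henselian adelic terms by the completed ones and yields
\[\cdots\to K_n(X)\to\prod_{x\in X^1}\hat K_n(\roi_{X,x})\oplus\prod_{y\in X^0}K_n(\roi_{X,y})\to\rprod_{x\in X^1}\hat K_n(\Frac\roi_{X,x})\to\cdots.\]
Finally, combining the completed two-term resolution with this long exact sequence gives the image/kernel descriptions of $H^0(X_\sub{Nis},\cal K_n)$ and $H^1(X_\sub{Nis},\cal K_{n+1})$ by the same chase as in corollary~\ref{corollary_ker_coker_description_of_K_groups_Nis}.

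The only substantive input is the local square, which is exactly where the hypothesis that $\hat\roi_{X,x}$ satisfies conjecture~$1$ enters; everything afterwards is formal. Consequently the step most liable to error is bookkeeping rather than conceptual: one must check that forming $\prod_{x\in X\setminus V}$ and then $\indlim_V$ genuinely preserves the bicartesian property and is compatible with the restricted-product conventions of definition~\ref{definition_restricted_prod} --- but this is exactly the verification already performed in the proof of lemma~\ref{lemma_henselian_to_complete}, whose argument, as noted there, never uses injectivity of the vertical maps.
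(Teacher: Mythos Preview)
Your proposal is correct and follows essentially the same route as the paper: establish the local bicartesian square displayed just before the theorem, then repeat the product-and-$\indlim_V$ argument of lemma~\ref{lemma_henselian_to_complete} verbatim (noting, as the paper does, that injectivity of the verticals is irrelevant) to pass from the Henselian adelic data to the completed adelic data in both the cohomology computation and the Mayer--Vietoris sequence. Your derivation of the local square via pasting the square of corollary~\ref{corollary_equivalence_of_completed_and_Henselian} with the defining pushout square of $\hat K_n(\Frac A)$ is a slightly more explicit justification than the paper's one-line assertion, but it amounts to the same thing (cf.\ the remark after the definition of $\hat K_n(\Frac A)$, which identifies the Henselian and completed pushout descriptions).
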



\begin{appendix}

\section{K-theory and Hochschild/cyclic homology}
This appendix summarises the various tools from $K$-theory and Hochschild/cyclic homology which are employed in the paper.

\subsection{$K$-theory}
$K$-theory in this paper is in the style of T.~Thomason and R.~Trobaugh \cite{Thomason1990}. However, our manipulations are mostly formulaic and the precise definitions do not matter a great deal; indeed, apart from the possibility of non-Cohen-Macaulay one-dimensional local rings, `classical' $K$-theory would suffice. Therefore this summary is only for the sake of completeness.

If $X$ is a scheme then $K(X)$ denotes the $K$-theory spectrum of the complicial biWaldhausen category $\op{Perf}(X)$ of perfect complexes on $X$ of globally finite Tor-amplitude, and $K_n(X):=\pi_n(K(X))$. This agrees with the `naive' definition using the category of locally-free coherent $\roi_X$-modules as soon as $X$ has an ample family of line bundles (`divisorial' in the language of SGA 6 II 2.2), e.g.~quasi-projective over an affine scheme, or regular+separated+Noetherian. We will not worry about exactness at the $K_0$ end of all our long exact Mayer-Vietoris sequences, and so do not replace $K$-theory by non-connective Bass $K^B$-theory.

If $Y$ is a closed subscheme of $X$, then $K(X\mbox{ on }Y)$ is the $K$-theory spectrum of the subcategory $\op{Perf}_Y(X)$ of $\op{Perf}(X)$ consisting of those complexes which are acyclic on $X\setminus Y$. Thomason and Trobaugh introduce the notion of when a morphism of schemes $f:X'\to X'$ is an `isomorphism infinitely near' $Y\subseteq X$; when this is satisfied, the resulting map $f^*:K(X\mbox{ on }Y)\to K(X'\mbox{ on }f^{-1}(Y))$ is an equivalence. Most importantly for us, if $A$ is a Noetherian ring and $I\subseteq A$, then \[A\to\mbox{the completion or Henselization of $A$ at $I$}\] are isomorphisms infinitely near $I$ \cite[3.19.2]{Thomason1990}.

\subsection{Karoubi-Villamayor $K$-theory}
Given a ring $R$, let $R[\Delta^\bullet]$ be the usual simplicial ring, which in degree $n$ is equal to \[R[T_0,\dots,T_n]/\pid{\sum_iT_i=1}.\] Then $GL(R[\Delta^\bullet])$ is a simplicial group and the Karoubi-Villamayor $K$-theory of $R$ is defined by \[KV_n(R)=\pi_n(BGL(R[\Delta^\bullet]).\tag{$n\ge1$}\] There are natural maps $K_n(R)\to KV_n(R)$, for $n\ge 1$, arising as edge maps in a first quadrant spectral sequence \[E^1_{pq}=K_q(R[\Delta^p])\Rightarrow KV_{p+q}(R)\tag{$p\ge 0,\,q\ge1$},\] called the Andersen spectral sequence.

In particular, if $R$ is $K_1$-regular, i.e.~$K_1(R)\isoto K_1(R[T_1])\isoto K_1(R[T_1,T_2]\isoto\cdots$, then we obtain a short exact sequence \[NK_2(R)\to K_2(R)\to KV_2(R)\to 0,\] where $NK_2(R)=\op{coker}(K_2(R)\to K_2(R[T]))$.

$KV$-theory has much better excision type properties than usual $K$-theory. A {\em $GL$-fibration} is a homomorphism of rings $f:R\to R'$ such that \[GL(R[t_1,\dots,t_n])\times GL(R')\to GL(R'[t_1,\dots,t_n])\] is surjective for all $n\ge 1$. If $f$ is a $GL$-fibration then it is surjective, and the converse is true if $R'$ is regular. Suppose that $i:S\to R$ is a homomorphism of rings and $I\subseteq S$ is an ideal of $R$ mapped isomorphically onto an ideal of $R$; if $R\to R/i(I)$ is a $GL$-fibration then there is a long-exact Mayer-Vietories sequence \[\cdots\to KV_n(S)\to KV_n(S/I)\oplus KV_n(R)\to KV_n(R/i(I))\to\cdots\]

\subsection{Hochschild/cyclic homology}
Let $k$ be a commutative ring and $R$ a $k$-algebra (again, for us this will always be commutative and unital).  Define a simplicial $R$-algebra $C_\bullet(R)$ by
\begin{align*}
C_n(R)&=\underbrace{R\otimes_k\cdots\otimes_kR}_{\sub{$n+1$ copies}}\\
d_i(a_0\otimes\cdots\otimes a_n)&=\begin{cases} a_0\otimes\cdots\otimes a_ia_{i+1}\otimes\cdots\otimes a_n&(i=0,\dots,n-1)\\ a_na_0\otimes\cdots\otimes a_{n-1}&(i=n)\end{cases}\\
s_i(a_0\otimes\cdots\otimes a_n)&=a_0\otimes\cdots\otimes a_i\otimes 1\otimes a_{i+1}\otimes \cdots\otimes a_n\quad (i=0,\dots,n)
\end{align*}
From now on we write $C_\bullet(R)$ for the usual complex of $R$-modules associated to the simplicial $R$-algebra $C_\bullet(R)$. The Hochschild homology of $R$ is the homology of this complex: $HH_n(R):=H_n(C_\bullet(R))$.

The cyclic group $\bb Z/(n+1)\bb Z$ naturally acts on $C_n(R)$ by permutation, and we let \[C_n^\lambda(R)=C_n(R)/(1-t_n)C_n(R)\tag{$t_n$ a generator of $\bb Z/(n+1)\bb Z$}\] denote the coinvariants of the action. The differential on $C_\bullet(R)$ descends to give {\em Connes' complex} $C_\bullet^\lambda(R)$. Assuming that $\bb Q\subseteq R$ (which will always be the case for us), the cyclic homology of $R$ may be defined to be the homology of Connes' complex: $HC_n(R):=H_n(C_\bullet^\lambda(R))$.

If $R=R_0\oplus R_1\oplus\cdots$ is a graded $k$-algebra, then we write $\tilde{HH}_n(R)$ and $\tilde{HC}_n(R)$ for the reduced homology groups $HH_n(R)/HH_n(R_0)$ and $HC_n(R)/HC_n(R_0)$.

\section{The $K$-theory of seminormal local rings}
The purpose of this appendix is to collect various classical results on seminormal local rings and their $K$-theory \cite{Davis1978, Roberts1976, Weibel1980, Weibel1989}.

\subsection{Seminormal local rings}
Let $A$ be a one-dimensional, reduced ring whose normalisation $\tilde A$ is a finitely generated $A$-module (this is automatic is $A$ is excellent, which we will tacitly assume from now on). Then $A$ is said to be {\em seminormal} when the following equivalent conditions hold:
\begin{enumerate}
\item $\op{Pic}(A)\to\op{Pic}(A[T])$ is an isomorphism.
\item The conductor ideal $\frak c\subseteq A$ is a radical ideal in $\tilde A$; i.e.~$\tilde A/\frak c$ is a reduced ring (hence a finite product of fields since it is Artinian).
\item If $f\in A$ satisfies $f^2,f^3\in A$, then $f\in A$.
\end{enumerate}
Put \[A^+=\{f\in \tilde A:f^2\mbox{ and }f^3\mbox{ are in }A\}.\] Then $A^+$ is the smallest seminormal subring of $\tilde A$ containing $A$, and it is called the {\em seminormalisation} of $A$. Each maximal ideal of $A$ sits under a unique maximal ideal of $A^+$; i.e.~$\Spec A\to\Spec A^+$ is a bijection.

Now suppose $A$ (still one-dimensional, reduced, and excellent) is moreover local with residue field $k$. Let $\frak m_1,\dots\frak m_n$ be the distinct maximal ideals of $\tilde A$, and let $\frak M=\bigcap_{i=1}^n\frak m_1$ be its Jacobson radical. Then clearly $A$ is seminormal if and only if $\frak M$ coincides with the maximal ideal of $A$. In the local case, the seminormalisation can be explicty described:

\begin{lemma}
Let $A$ be a one-dimensional, excellent, reduced local ring with residue field $k$. Let $A^0$ be the set of those $f\in\tilde A$ satisfying the following:
\begin{enumerate}
\item For every maximal ideal $\frak m$ of $\tilde A$, $f$ mod $\frak m$ lies in $k$.
\item The value of $f$ mod $\frak m$ does not depend on which maximal ideal $\frak m$ was chosen.
\end{enumerate}
Then $A^0$ is the seminormalisation of $A$.
\end{lemma}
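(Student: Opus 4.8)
The goal is to prove $A^0=A^+$, where $A^+$ is the seminormalisation of $A$, i.e.~the smallest seminormal subring of $\tilde A$ containing $A$. I will do this by establishing three facts: (1) $A^0$ is a local subring of $\tilde A$ containing $A$, with maximal ideal equal to the Jacobson radical $\mathfrak M$ of $\tilde A$; (2) consequently $A^0$ is seminormal, by the criterion recalled just before the lemma; (3) $A^0$ is contained in every seminormal subring $B$ with $A\subseteq B\subseteq\tilde A$. Points (1) and (2) show that $A^0$ is \emph{a} seminormal subring of $\tilde A$ containing $A$, so $A^+\subseteq A^0$; point (3) applied to $B=A^+$ gives $A^0\subseteq A^+$. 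We may assume $A\ne\tilde A$, the normal case being trivial.

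For (1): each $\mathfrak m_i$ contracts to a maximal ideal of $A$ by integrality, hence to $\mathfrak m_A$; so $A$ embeds into each $\tilde A/\mathfrak m_i$ with image $k$, and the residue of $a\in A$ is the same in every $\tilde A/\mathfrak m_i$, being determined by $a\bmod\mathfrak m_A$. Thus $A\subseteq A^0$, and $A^0$ is clearly a subring of $\tilde A$. The ``common residue'' map $A^0\to k$ is a surjective ring homomorphism whose kernel is $A^0\cap\mathfrak M$ (an element of $A^0$ lies in $\mathfrak M=\bigcap_i\mathfrak m_i$ precisely when its common residue is $0$), and conversely $\mathfrak M\subseteq A^0$ because every element of $\mathfrak M$ has common residue $0$. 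Hence $A^0/\mathfrak M\cong k$ is a field, and any $f\in A^0\setminus\mathfrak M$ is a unit of $\tilde A$ whose inverse, having common residue the inverse of that of $f$, again lies in $A^0$; so $A^0$ is local with maximal ideal $\mathfrak M$. For (2): $A^0$ is one-dimensional, reduced and module-finite over $A$ (it is an $A$-submodule of the finite $A$-module $\tilde A$), and its normalisation is again $\tilde A$; since the maximal ideal of $A^0$ equals the Jacobson radical $\mathfrak M$ of $\widetilde{A^0}=\tilde A$, the stated criterion shows $A^0$ is seminormal.

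The real content is (3). Let $B$ be seminormal with $A\subseteq B\subseteq\tilde A$; I claim $\mathfrak M\subseteq B$, which finishes the proof because any $f\in A^0$ can be written $f=a+m$ with $a\in A$ chosen so that $a\bmod\mathfrak m_A$ is the common residue of $f$, whence $m:=f-a\in\mathfrak M$. To prove $\mathfrak M\subseteq B$: the conductor $\mathfrak c$ of $\tilde A$ into $A$, viewed as an ideal of $\tilde A$, lies inside $A\subseteq B$, hence $\mathfrak c\subseteq\mathfrak c_B$, the conductor of $\tilde A$ into $B$; since $B$ is seminormal, $\mathfrak c_B$ is a radical ideal of $\tilde A$ (condition (ii) of seminormality), so $\mathfrak M=\sqrt{\mathfrak c}\subseteq\sqrt{\mathfrak c_B}=\mathfrak c_B\subseteq B$. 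Here $\sqrt{\mathfrak c}=\mathfrak M$ because the unique non-normal point of $\Spec A$ is $\mathfrak m_A$, whose fibre in $\Spec\tilde A$ is exactly $\{\mathfrak m_1,\dots,\mathfrak m_n\}$, so $V(\mathfrak c)=\{\mathfrak m_1,\dots,\mathfrak m_n\}$. (Alternatively, avoiding condition (ii): $\mathfrak M^N\subseteq\mathfrak c\subseteq B$ for $N\gg0$, and if $\mathfrak M^k\subseteq B$ with $k\ge2$ then every $m\in\mathfrak M^{k-1}$ satisfies $m^2,m^3\in\mathfrak M^k\subseteq B$, hence $m\in B$ by condition (iii) applied to the seminormal ring $B$; descending on $k$ yields $\mathfrak M\subseteq B$.)

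The main obstacle is step (3): one must isolate the correct reason that $\mathfrak M$ is forced into every intermediate seminormal ring, and both arguments above hinge on identifying the radical of the conductor using that $\mathfrak m_A$ is the sole non-normal point of $\Spec A$. The remaining verifications — that $A^0$ is a ring, is local, and has the claimed properties — are routine once the residue-value bookkeeping is set up.
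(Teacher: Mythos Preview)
Your proof is correct and follows the same overall skeleton as the paper's: show $A\subseteq A^0$, show $A^0$ is seminormal (so $A^+\subseteq A^0$), observe $A^0=A+\frak M$, and show $\frak M\subseteq A^+$ (so $A^0\subseteq A^+$). The details are arranged a little differently, almost dually: the paper verifies condition~(iii) directly on $A^0$ (if $f^2,f^3\in A^0$ then trivially $f\in A^0$), while you instead invoke the ``maximal ideal $=\frak M$'' criterion to see that $A^0$ is seminormal; conversely, for the key inclusion $\frak M\subseteq A^+$ the paper simply applies that same criterion to the local ring $A^+$ (using that $\Spec A\to\Spec A^+$ is a bijection, so $A^+$ is local), whereas you give an independent conductor argument, with a pleasant alternative via descending powers of $\frak M$ and condition~(iii), that in fact establishes $\frak M\subseteq B$ for \emph{every} intermediate seminormal $B$. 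Your route is slightly more self-contained and proves a marginally stronger step~(3); the paper's is a line or two shorter because it reuses the pre-lemma observation for both directions.
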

\begin{proof}
First notice that $A\subseteq A^0$. Next let $f\in\tilde A$; it is easy to see that if $f^2$ and $f^3$ both satisfy (i) and (ii), then so does $f$. Therefore $A^0$ is seminormal; i.e.~$A^+\subseteq A^0$.

Finally, notice that $A^0$ is local with residue field $k$: its maximal ideal is $\frak M$, the Jacobson radical of $\tilde A$. Let $f\in A^0$; we will show that $f\in A^+$. We mentioned above that the maximal ideal of $A^+$ equals $\frak M$, so if $f\in\frak M$ then there is nothing more to show. Else $f$ is a unit in $A^0$; as $A^0/\frak M=k$, there is $\al\in\mult A$ such that $\al f\in1+\frak M$. Again, since $\frak M\subset A^+$, it follows that $f\in A^+$; i.e.~$A^0\subseteq A^+$.
\end{proof}

\begin{remark}
Let $A$ be as in the lemma, and suppose it is seminormal; suppose further that $\tilde A$ happens also to be local. Then the maximal ideal of $\tilde A$ equals the maximal ideal of $A$ and so the following are equivalent:
\begin{enumerate}
\item $k(A)=k(\tilde A)$;
\item $A$ is normal.
\end{enumerate}
An example to have in mind is $A=k+tK[[t]]$, where $K/k$ is a finite extension of fields. Then $A$ is seminormal with integral closure $\tilde A=K[[t]]$; the residue fields are $k(A)=k$ and $k(\tilde A)=K$.
\end{remark}

Suppose that $A$ is as in the lemma. Regardless of seminormality, the minimal prime ideals $\{\frak q\}$ of $\hat A$ are in bijective correspondence with the maximal ideals $\{\frak m\}$ of $\tilde A$ via \[\frak m_i\longleftrightarrow\frak q=\ker\pid{\hat A\to\hat{(\tilde A)_{\frak m}}}.\] See, e.g., \cite[Thm.~6.5]{Dieudonne1967}. For clarity, write $C=\hat A$, a one-dimensional, reduced, complete local ring, which is seminormal if and only if $A$ is. Let $\frak q_1,\dots,\frak q_n$ be the minimal prime ideals of $C$ and $\frak m_1,\dots,\frak m_n$ be the corresponding maximal ideals of $\tilde C=\prod_i\tilde C_{\frak m_i}$ (each $\tilde C_{\frak m_i}$ is a complete discrete valuation ring and is the normalisation of $C/\frak q_i$). Then we have a diagram of inclusions
\[\xymatrix@M=2mm{
C\ar@{^(->}[r]\ar@{^(->}[d]&\tilde C\ar@{=}[d]\\
\prod_iC/\frak q_i\ar@{^(->}[r]&\prod_i\tilde C_{\frak m_i}
}\]
and, by the previous lemma, \[C^+=\{(f_1,\dots,f_n)\in\tilde C:f_1\mbox{ mod }\frak m_1=\cdots=f_n\mbox{ mod }\frak m_n\in k\}.\]

\begin{lemma}\label{lemma_structure_of_complete_seminormal_rings}
Maintain notation from the previous paragraph, and suppose $C$ is seminormal. For $i=1,\dots,n$, set $I_i=\bigcap_{j\neq i}\frak q_j$. Then $I_1+\dots+I_n=\frak m_C$ and the sum is direct.

In particular, if $C$ contains a coefficient field $k$, then $C\cong k\oplus I_1\oplus\cdots\oplus I_m$ and $k+I_i\cong C/\frak q_i$.
\end{lemma}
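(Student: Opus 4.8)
The plan is to compute everything inside the product decomposition $\tilde C=\prod_{i=1}^nR_i$, where $R_i:=\tilde C_{\frak m_i}$ is the normalisation of $C/\frak q_i$, a complete discrete valuation ring; this decomposition is exactly what is recorded in the paragraph preceding the lemma, and is the one place where completeness of $C$ is used. I regard $C$ as a subring of $\tilde C$ via $c\mapsto(c\bmod\frak q_1,\dots,c\bmod\frak q_n)$, and write $e_i\in\tilde C$ for the idempotent equal to $1$ in the $i$-th factor and $0$ in the others (so in general $e_i\notin C$). When $n=1$ there is nothing to prove, once one agrees to read $I_1$ as $\frak m_C$, so I assume $n\ge2$; then $C$, being local, has no nontrivial idempotents, whence $C\subsetneq\tilde C$.

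First I would identify the $I_i$ concretely. The $i$-th projection $C\to R_i$ is the composite $C\onto C/\frak q_i\into R_i$, so its kernel is $\frak q_i$; equivalently, $\frak q_j$ is the set of elements of $C$ whose $j$-th coordinate vanishes, and intersecting over all $j\ne i$ gives $I_i=C\cap R_ie_i$, the elements of $C$ supported in the $i$-th factor. The sum $I_1+\dots+I_n$ is then visibly direct: $I_i\subseteq R_ie_i$ while $\sum_{j\ne i}I_j\subseteq\sum_{j\ne i}R_je_j$, and these two submodules of $\tilde C$ meet only in $0$. (Directness can also be seen from $C$ being reduced, since $\sum_{j\ne i}I_j\subseteq\frak q_i$ and $I_i\cap\frak q_i\subseteq\bigcap_j\frak q_j=0$.)

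The crux is the equality $I_1+\dots+I_n=\frak m_C$, and seminormality enters here. By the criterion recalled in the appendix, $C$ being seminormal means precisely that $\frak m_C$ equals the Jacobson radical $\frak M$ of $\tilde C$, which in the product picture is $\prod_{i=1}^n\frak m_i$; in particular $\frak m_C$ is simultaneously an ideal of $C$ and of $\tilde C$, so it coincides with the conductor of $C$ in $\tilde C$. Now $I_i\subseteq\frak q_j\subseteq\frak m_C$ for any $j\ne i$, which gives the inclusion $\subseteq$. Conversely, given $x\in\frak m_C$ with coordinates $(x_1,\dots,x_n)$ in $\prod_iR_i$, each $x_ie_i$ lies in $\prod_j\frak m_j=\frak m_C\subseteq C$ and also in $R_ie_i$, hence $x_ie_i\in C\cap R_ie_i=I_i$; since $x=\sum_ix_ie_i$, this proves the main assertion.

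Finally, for the ``in particular'' clause, suppose $C$ contains a coefficient field $k$, so that $C=k\oplus\frak m_C$ as $k$-modules and hence $C=k\oplus I_1\oplus\dots\oplus I_n$ by the above. Since $I_i$ is an ideal of $C$, the subset $k+I_i$ is a subring; I claim the quotient map $C\to C/\frak q_i$ restricts to a ring isomorphism on it. Surjectivity holds because $\sum_{j\ne i}I_j\subseteq\frak q_i$, hence $C=(k+I_i)+\frak q_i$; injectivity is the statement $(k+I_i)\cap\frak q_i=0$, which follows because the image in $R_i$ of an element $\lambda+y$ with $\lambda\in k$ and $y\in I_i$ equals $\lambda+y_i$, with $\lambda$ a unit of $R_i$ (or $0$) and $y_i\in\frak m_i$, so it vanishes only for $\lambda=0$, which forces $y=0$. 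Thus $C=(k+I_i)\oplus\frak q_i$ and $k+I_i\isoto C/\frak q_i$. The only non-formal ingredient is the passage from seminormality to $\frak m_C=\frak M=\prod_i\frak m_i$; granting that, which is proved in the appendix, everything else is bookkeeping with the idempotents $e_i$, and I do not anticipate a genuine obstacle.
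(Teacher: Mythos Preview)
Your proof is correct and follows essentially the same approach as the paper: both use the product decomposition $\tilde C=\prod_i\tilde C_{\frak m_i}$ together with the seminormality criterion $\frak m_C=\frak M$ to decompose an element of $\frak m_C$ coordinate-wise into pieces lying in the $I_i$. The paper's proof is considerably more terse---it dispatches directness via $I_i\cap\sum_{j\neq i}I_j\subseteq\bigcap_j\frak q_j=0$ (an argument you also mention) and leaves the equal-characteristic claims to the reader---but the mathematical content is the same.
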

\begin{proof}
The sum is direct because, for any $i$, \[I_i\cap\sum_{j\neq i}I_j\subseteq\bigcap_j\frak q_j=\{0\}.\] By the diagram just before the lemma, any element in the maximal ideal of $C=C^+$ can be written $f_1+\dots+f_n$ where each $f_i$ is zero in $C/\frak q_j$ for $j\neq i$; i.e.~$f_i\in I_i$, as required.

The claims in the equal-characteristic case follow at once.
\end{proof}

With $C$ as in the lemma, it is easy to see that each $C/\frak q_i$ is also seminormal. Since $\tilde{C/\frak q_i}=\tilde C_{\frak m_i}$, we may apply the previous remark to deduce that $C/\frak q_i$ is a complete discrete valuation ring if and only if $\tilde C/\frak m_i=k$.

\subsection{Calculations in $K$-theory related to seminormal rings}
Suppose that $A\subseteq B$ is an inclusion of rings, and that $I\subseteq A$ is an ideal of $B$ contained inside $A$, so that 
\[\xymatrix{
A\ar[r]\ar[d] & B\ar[d]\\
A/I\ar[r] & B/I
}\]
is Cartesian, as a square of abelian groups. Suppose further that $k:=A/I$, $K:=B/I$, and $B$ are regular rings.

It is essential for what is to follow to observe that these assumptions remain valid if we replace $A$ by $A[X_1,\dots,X_n]$; $B$ by $B[X_1,\dots,X_n]$; $k$ by $k[X_1,\dots,X_n]$; and $K$ by $K[X_1,\dots,X_n]$.

Since $B\to K$ is a surjection to a regular ring, it is a $GL$-filbration and therefore the ideal $I$ satisfies excision for $KV$-theory with respect to $A$ and $B$; i.e. $KV_*(A,I)\isoto KV_*(B,I)$. But also, $B$ and $K$ are regular, so $K_*(B,I)\isoto KV_*(B,I)$. In conclusion, \[KV_*(A,I)\isoto K_*(B,I),\] and we have a long exact sequence \[\cdots\to K_*(B,I)\to KV_*(A)\to K_*(k)\to\cdots\]

\begin{lemma}[\cite{Weibel1980} \cite{Vorst1979}]
Assume that $K_i(A)\to K_i(k)$ is surjective for $i=1,2$. Then $A$ is $K_1$-regular if and only if $\Omega_{K/k}^1=0$.
\end{lemma}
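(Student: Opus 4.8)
The plan is to translate $K_1$-regularity of $A$ into a statement about $K_1$-excision for the conductor square (the one with $A\to B$, $A\to k$, $B\to K$, where $k=A/I$ and $K=B/I$) and then to invoke the classical identification of the excision obstruction with Kähler differentials. Recall that $A$ is $K_1$-regular precisely when $K_1(A)\to K_1(A[T_1,\dots,T_r])$ is an isomorphism for every $r$, so it suffices to control $K_1(A[\underline T])$ for all finite tuples $\underline T$.

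\textbf{Step 1 (formal reduction).} This uses exactly the exact sequence displayed just above the lemma. One compares the long exact sequence of relative $K$-groups for the ideal $I\subseteq A$, namely $\cdots\to K_{n+1}(k)\to K_n(A,I)\to K_n(A)\to K_n(k)\to\cdots$, with the $KV$-theoretic one $\cdots\to K_{n+1}(k)\to K_n(B,I)\to KV_n(A)\to K_n(k)\to\cdots$; the latter is legitimate because $B\onto K$ is a $GL$-fibration and $B,k,K$ are regular, so that $KV_*(k)=K_*(k)$ and $KV_*(A,I)\cong K_*(B,I)$. These two sequences are compatible via the natural transformation $K\to KV$, and the hypotheses $K_i(A)\onto K_i(k)$ for $i=1,2$ kill the boundary maps emanating from $K_2(k)$ and make $K_1(A)$, hence also $KV_1(A)$, surject onto $K_1(k)$; both sequences therefore split off short exact sequences $0\to K_1(A,I)\to K_1(A)\to K_1(k)\to 0$ and $0\to K_1(B,I)\to KV_1(A)\to K_1(k)\to 0$. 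A five-lemma argument then shows that $K_1(A)\to KV_1(A)$ is an isomorphism if and only if the excision map $\phi\colon K_1(A,I)\to K_1(B,I)$ is. All of this persists after replacing $A$ by $A[\underline T]$: the surjectivity hypotheses survive (since $k$ is regular, $K_*(k[\underline T])=K_*(k)$), the rings $B[\underline T]$ and $K[\underline T]$ remain regular, and $KV$ is homotopy invariant; furthermore a short argument with the Andersen spectral sequence (whose $q=1$ row becomes the constant simplicial group $K_1(A[\underline T])$ once $A$ is $K_1$-regular) shows that $K_1$-regularity of $A[\underline T]$ forces $K_1(A[\underline T])\cong KV_1(A[\underline T])$. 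Combining these observations, $A$ is $K_1$-regular if and only if $\phi$ is an isomorphism after every such polynomial base change.

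\textbf{Step 2 (the main obstacle: identifying the obstruction).} The failure of $K_1(A,I)\to K_1(B,I)$ to be an isomorphism, in a conductor square with $B$ and $K=B/I$ regular, is measured by the birelative $K$-group of the square, and a dimension shift expresses it in terms of the relative $K_2$-excision obstruction. Once the surjectivity hypotheses have been used to discard the contributions of $K_2(A)\onto K_2(k)$ and of the regular (hence $NK$-acyclic) rings $B,k,K$, the classical computations of relative $K_2$ of a square-zero ideal in terms of Kähler differentials, due to Bloch and van der Kallen and worked out by Weibel \cite{Weibel1980} and Vorst \cite{Vorst1979} for exactly this conductor-square situation, identify what remains with $\Omega^1_{K/k}$. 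Since $\Omega^1_{K[\underline T]/k[\underline T]}\cong\Omega^1_{K/k}\otimes_KK[\underline T]$, the obstructions for all polynomial extensions vanish simultaneously with $\Omega^1_{K/k}$, whence $A$ is $K_1$-regular if and only if $\Omega^1_{K/k}=0$. I expect this last identification --- trimming the excision obstruction down to exactly $\Omega^1_{K/k}$, with no leftover summands --- to be where the real work lies; it is precisely the content of the cited results of Weibel and Vorst, while everything else is the bookkeeping carried out in Step 1.
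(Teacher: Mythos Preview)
Your Step~1 is correct and matches the paper's argument closely: reduce $K_1$-regularity to the bijectivity of $K_1(A)\to KV_1(A)$, break the two relative sequences into short exact sequences using the surjectivity hypotheses, and conclude that everything hinges on whether the excision map $\phi:K_1(A,I)\to K_1(B,I)$ is an isomorphism (after all polynomial extensions).

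The gap is in Step~2, specifically in the converse direction. The relevant input is the Weibel--Geller exact sequence
\[
K_2(B,I)\longrightarrow \Omega^1_{B/A}\otimes_B I/I^2\longrightarrow K_1(A,I)\longrightarrow K_1(B,I)\longrightarrow 0,
\]
which only identifies $\ker\phi$ as a \emph{quotient} of $\Omega^1_{B/A}\otimes_B I/I^2\cong\Omega^1_{K/k}\otimes_K I/I^2$, not as this module on the nose. So your assertion that ``what remains is $\Omega^1_{K/k}$'' is not justified: the map from $K_2(B,I)$ may well kill part of it, and nothing you have cited rules this out. Consequently your argument for ``$A$ $K_1$-regular $\Rightarrow$ $\Omega^1_{K/k}=0$'' does not go through as written. (The reference to ``relative $K_2$ of a square-zero ideal'' is also off target: $I$ is not square-zero here.)

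The paper closes this gap by a different manoeuvre. Assuming $A$ is $K_1$-regular, one compares the Weibel--Geller sequences for $(A,B,I)$ and for $(A[\underline X],B[\underline X],I[\underline X])$: the outer terms $K_2(B,I)$, $K_1(A,I)$, $K_1(B,I)$ are unchanged under adjoining variables (using regularity of $B$ and $K_1$-regularity of $A$), so a diagram chase forces the inclusion
\[
\Omega^1_{B/A}\otimes_B I/I^2\hookrightarrow(\Omega^1_{B/A}\otimes_B I/I^2)\otimes_K K[\underline X]
\]
to be \emph{surjective} for every number of variables, which is only possible if the module is zero. Since $I/I^2$ is locally free over $K$ (as $B$ and $K$ are regular), this is equivalent to $\Omega^1_{K/k}=0$. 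You should replace your appeal to an exact identification of the obstruction with this comparison argument.
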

\begin{proof}
First notice that $A$ is $K_1$ regular if and only if $K_1(A)\to KV_1(A)$ is an isomorphism: `if' follows from the homotopy invariance of Karoubi-Villimayor $K$-theory, while `only if' follows from examining the Anderson spectral sequence.

Secondly, note that our surjectivity assumption clearly implies that the map $KV_i(A)\to KV_i(k)=K_i(k)$ is also surjective for $i=1,2$. The long exact sequence for relative $K$-theory and the long exact sequence just before the lemma therefore break into short exact sequences:
\[\xymatrix{
0\ar[r]&K_1(A,I)\ar[d]\ar[r]&K_1(A)\ar[d]\ar[r]&K_1(k)\ar[d]\ar[r]&0\\
0\ar[r]&K_1(B,I)\ar[r]&KV_1(A)\ar[r]&K_1(k)\ar[r]&0
}\tag{\dag}\]
So, $K_1(A)\to KV_1(A)$ is an isomorphism if and only if $K_1(A,I)\to K_1(B,I)$ is an isomorphism.

According to Weibel-Geller \cite{Geller1980}, there is a natural exact sequence \[K_2(B,I)\to \Omega_{B/A}^1\otimes_B I/I^2\to K_1(A,I)\to K_1(B,I)\to0,\] so if $\Omega_{B/A}^1\otimes_BI/I^2=0$ then we have shown $K_1(A,I)\to K_1(B,I)$ is an isomorphism, i.e.~$A$ is $K_1$-regular. We will now prove the converse of this statement.

Assume $A$ is $K_1$-regular. Then $K_1(A,I)\to K_1(A[\ul X],I[\ul X])$ is an isomorphism for any number of variables $\ul X=X_1,\dots,X_m$. So, comparing the Weibel-Geller sequences for $A,B$ and $A[\ul X]$, $B[\ul X]$, we have
\[\hspace{-10mm}\xymatrix{
K_2(B,I)\ar[r]\ar[d]^{\cong}& \Omega_{B/A}^1\otimes_B I/I^2\ar[r]\ar[d]& K_1(A,I)\ar[r]\ar[d]^{\cong}& K_1(B,I)\ar[d]^{\cong}\\
K_2(B[\ul X],I[\ul X])\ar[r]& \Omega_{B[\ul X]/A[\ul X]}^1\otimes_{B[\ul X]} I[\ul X]/I[\ul X]^2\ar[r]& K_1(A[\ul X],I[\ul X])\ar[r]& K_1(B[\ul X],I[\ul X])
}\]
where the left and right-most vertical arrows are isomorphisms due to regularity assumptions. A diagram chase reveals that the remaining vertical arrow, between the differential forms, is surjective; but this arrow can be rewritten as \[\Omega_{B/A}^1\otimes_BI/I^2\to(\Omega_{B/A}^1\otimes_B I/I^2)\otimes_KK[\ul X].\] Clearly this is surjective if and only if $\Omega_{B/A}^1\otimes_B I/I^2=0$.

This completes the proof of our claim that $A$ is $K_1$-regular if and only if $\Omega_{B/A}^1\otimes_B I/I^2=0$. To finish, observe that because $B$ and $B/I$ are regular, the morphism $\Spec B/I\to\Spec B$ is automatically a regular intersection and thus $I/I^2$ is a locally free $K$-module. So the vanishing of $\Omega_{B/A}^1\otimes_B I/I^2=\Omega_{K/k}^1\otimes_KI/I^2$ is equivalent to that of $\Omega_{K/k}^1$.
\end{proof}

If $A$ is $K_1$-regular then examination of the Andersen spectral sequence reveals that $NK_2(A)\to\op{nil}K_2(A)\,(:=\ker(K_2(A)\to KV_2(A)))$ and $K_2(A)\to KV_2(A)$ are both surjective, whence there is a short exact sequence \[NK_2(A)\to K_2(A)\to KV_2(A)\to 0\]

\begin{corollary}\label{corollary_K_1_regularity_for_seminormal_rings}
Let $A$ be a one-dimensional, seminormal local ring, with residue field $k$. Then $A$ is $K_1$ regular if and only if each residue field of $\tilde A$ is a separable extension of $k$.
\end{corollary}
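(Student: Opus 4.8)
The plan is to deduce the corollary from the lemma of Weibel and Vorst proved just above, by making an appropriate choice of the ring $B$ and the ideal $I$ appearing in its setup.

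Write $\frak m$ for the maximal ideal of $A$ and $\frak M$ for the Jacobson radical of $\tilde A$. Since $A$ is seminormal, $\frak m=\frak M$, so $\frak m$ is in fact an ideal of $\tilde A$ contained in $A$; I therefore take $B=\tilde A$ and $I=\frak m=\frak M$ in the setup preceding the Weibel--Vorst lemma. Because $A$ is (tacitly) excellent, $\tilde A$ is module-finite over $A$ and is a one-dimensional, reduced, normal Noetherian ring, hence regular. Moreover $A/I=k$ is a field, and, writing $\frak m_1,\dots,\frak m_n$ for the maximal ideals of $\tilde A$ and $K_i:=\tilde A/\frak m_i$ for its residue fields, the Chinese remainder theorem gives $K:=B/I=\tilde A/\frak M\cong\prod_{i=1}^nK_i$; each $K_i$ is a finite extension of $k$ by module-finiteness, so $K$ is a finite product of fields, again regular. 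Thus all the hypotheses in the preamble to the Weibel--Vorst lemma are satisfied.

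It remains to check the surjectivity hypothesis of that lemma, that $K_i(A)\to K_i(k)$ is surjective for $i=1,2$. For $i=1$ this is the surjectivity of $\mult A\to\mult k$, which holds since $A$ is local. For $i=2$, Matsumoto's theorem says $K_2(k)$ is generated by Steinberg symbols $\{a,b\}$ with $a,b\in\mult k$; choosing units of $A$ lifting $a$ and $b$ produces a preimage of $\{a,b\}$ in $K_2(A)$, so $K_2(A)\to K_2(k)$ is onto. Neither verification should cause any trouble.

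The Weibel--Vorst lemma now applies and shows that $A$ is $K_1$-regular if and only if $\Omega^1_{K/k}=0$. Since $K=\prod_{i=1}^nK_i$ we have $\Omega^1_{K/k}=\prod_{i=1}^n\Omega^1_{K_i/k}$, so this holds if and only if $\Omega^1_{K_i/k}=0$ for every $i$; and for a finite field extension $K_i/k$ the vanishing $\Omega^1_{K_i/k}=0$ is equivalent (a standard fact) to $K_i/k$ being separable. Chaining these equivalences yields precisely the assertion that $A$ is $K_1$-regular if and only if each residue field of $\tilde A$ is a separable extension of $k$. The only point that requires genuine care is confirming that the rather rigid setup of the Weibel--Vorst lemma is legitimately available here, i.e.\ that with $I=\frak m$ all four rings and their inclusions are of the required shape; this is exactly where seminormality is used, via the equality $\frak m=\frak M$.
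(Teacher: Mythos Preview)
Your proof is correct and follows essentially the same route as the paper: set $B=\tilde A$, $I=\frak m_A=\frak M$ (using seminormality), verify the regularity and surjectivity hypotheses of the Weibel--Vorst lemma, and then translate $\Omega^1_{K/k}=0$ into separability of each residue extension. The paper's argument is terser but identical in substance; you have merely spelled out the final equivalence $\Omega^1_{K_i/k}=0\Leftrightarrow K_i/k$ separable, which the paper leaves implicit.
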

\begin{proof}
Set $B=\tilde A$ and $I=\frak m_A$. Then the data $I\subseteq A\subseteq B$ satisfy all the conditions above; moreover $K_1(A)=\mult A\to K_1(k)=\mult A$ is surjective, and so $K_2(A)\to K_2(k)$ is also surjective, by appealing to the fact that $K_2(k)$ is generated by Steinberg symbols.

Therefore we may apply the previous lemma, from which the result follows.
\end{proof}

A useful tool for describing $K_2$ of one-dimensional, seminormal local rings, and other `excision-like' examples, is the following theorem of R.~Dennis and M.~Krusemeyer \cite[Prop.~2.10 \& Thm.~3.1]{Dennis1979}:

\begin{theorem}\label{theorem_Dennis_Krusemeyer}
Let $A$ be a ring containing a subring $k$ and ideals $I_1,\dots,I_m$ such that $A\cong k\oplus I_1\oplus\dots\oplus I_m$ as abelian groups. Then \[K_2(A)\cong K_2(k)\oplus \bigoplus_{i=1}^n L_i\oplus \bigoplus_{i<j}(I_i/I_i^2\otimes_k I_j/I_j^2),\] where $L_i:=\ker(K_2(k+I_i)\to K_2(k))$.
\end{theorem}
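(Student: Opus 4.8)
The plan is to reduce the statement to relative $K_2$-groups, build $A$ up by adjoining the ideals $I_i$ one at a time, and isolate the single non-formal input, namely a computation of a double-relative $K_2$-group attached to a pair of ideals with vanishing product. First I would unwind the hypothesis $A\cong k\oplus I_1\oplus\cdots\oplus I_m$: it forces $I_iI_j\subseteq I_i\cap I_j=0$ for $i\ne j$, so each $k+I_i$ is a subring, as is $A_r:=k+I_1+\cdots+I_r$, and each $I_j/I_j^2$ is a module over $k=A/\frak I$ (since $\frak I\cdot I_j=I_j^2$, writing $\frak I:=I_1+\cdots+I_m$), so that the tensor products over $k$ in the statement make sense. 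Since $A/\frak I=k$ and the inclusion $k\hookrightarrow A$ splits the projection, $K_2(A)=K_2(k)\oplus K_2(A,\frak I)$, and likewise $L_i=\ker(K_2(k+I_i)\to K_2(k))=K_2(k+I_i,I_i)$. Thus the theorem is equivalent to
\[K_2(A,\frak I)\;\cong\;\bigoplus_{i=1}^m K_2(k+I_i,I_i)\;\oplus\;\bigoplus_{i<j}I_i/I_i^2\otimes_k I_j/I_j^2.\]

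I would prove this by induction on $r$, computing $K_2(A_r,\frak I_r)$ where $\frak I_r:=I_1+\cdots+I_r$; the case $r=1$ is trivial. The inductive step rests on two split short exact sequences. Applying the long exact sequence of the pair of ideals $\frak I_{r-1}\subseteq\frak I_r$ of $A_r$, and noting that the subring inclusion $k+I_r\hookrightarrow A_r$ is a section of the quotient $A_r\twoheadrightarrow A_r/\frak I_{r-1}=k+I_r$ which carries the ideal $I_r$ into $\frak I_r$ (hence induces a section on all the relative $K$-groups in sight, killing the boundary maps), one obtains
\[0\to K_2(A_r,\frak I_{r-1})\to K_2(A_r,\frak I_r)\to K_2(k+I_r,I_r)\to 0.\]
On the other hand the retraction $A_r\twoheadrightarrow A_r/I_r=A_{r-1}$ of the inclusion $A_{r-1}\hookrightarrow A_r$ carries the ideal $\frak I_{r-1}$ isomorphically onto $\frak I_{r-1}$, producing a split exact sequence
\[0\to Q_r\to K_2(A_r,\frak I_{r-1})\to K_2(A_{r-1},\frak I_{r-1})\to 0,\]
where $Q_r:=\ker\bigl(K_2(A_r,\frak I_{r-1})\to K_2(A_{r-1},\frak I_{r-1})\bigr)$ is the double-relative $K_2$-group of the two ideals $\frak I_{r-1}$ and $I_r$ of $A_r$, which satisfy $\frak I_{r-1}\cdot I_r=0$.

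The crux, and the step I expect to be the real obstacle, is the identification
\[Q_r\;\cong\;\frak I_{r-1}/\frak I_{r-1}^2\otimes_k I_r/I_r^2\;=\;\bigoplus_{j<r}I_j/I_j^2\otimes_k I_r/I_r^2,\]
the second equality using $\frak I_{r-1}^2=\sum_{j<r}I_j^2$. For this I would use the presentation of relative $K_2$ by Dennis--Stein symbols $\langle a,b\rangle$: when the two ideals multiply to zero the Dennis--Stein relations degenerate, so that a symbol $\langle a,b\rangle$ with $a\in\frak I_{r-1}$ and $b\in I_r$ becomes bi-additive, $k$-bilinear, and dependent only on the classes of $a$ and $b$ modulo squares, yielding a well-defined surjection $\frak I_{r-1}/\frak I_{r-1}^2\otimes_k I_r/I_r^2\twoheadrightarrow Q_r$; the inverse is produced from the fact that such symbols already generate $Q_r$, together with a verification that no further relations among them survive. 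Granting this, the two split sequences combine to
\[K_2(A_r,\frak I_r)\;\cong\;K_2(A_{r-1},\frak I_{r-1})\;\oplus\;\Bigl(\bigoplus_{j<r}I_j/I_j^2\otimes_k I_r/I_r^2\Bigr)\;\oplus\;K_2(k+I_r,I_r),\]
and feeding in the inductive hypothesis for $A_{r-1}$ and taking $r=m$ gives the displayed formula for $K_2(A,\frak I)$, hence the theorem. The remaining bookkeeping — checking that the various ring-theoretic sections genuinely respect the relevant ideals, so that they do induce the asserted splittings on relative $K$-theory — is routine, but is exactly where the relations $I_iI_j=0$ are used over and over.
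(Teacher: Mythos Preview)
The paper does not give its own proof of this theorem: it is stated in the appendix as a result of Dennis and Krusemeyer \cite[Prop.~2.10 \& Thm.~3.1]{Dennis1979} and simply cited. So there is no ``paper's proof'' to compare against.

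That said, your outline is essentially the Dennis--Krusemeyer argument. The reduction to relative $K_2$ via the split retraction $A\to k$, the inductive filtration by the subrings $A_r=k+I_1+\cdots+I_r$, and the two split short exact sequences you write down are exactly how they organise the computation. The identification you flag as the crux, namely that the double-relative group $Q_r$ for two ideals $\frak I_{r-1}$ and $I_r$ with $\frak I_{r-1}\cdot I_r=0$ is $\frak I_{r-1}/\frak I_{r-1}^2\otimes_k I_r/I_r^2$, is precisely their Proposition~2.10, and it is proved there by the Dennis--Stein symbol calculus you sketch: when the product of the ideals vanishes, the relations $\langle a,b\rangle+\langle a',b\rangle=\langle a+a'-aa'b,b\rangle$ etc.\ collapse to bi-additivity, and one checks these symbols generate $Q_r$ and that the tensor product relations exhaust the remaining ones. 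Your write-up correctly isolates this as the non-formal input and is honest that the injectivity direction (``no further relations survive'') requires real work; that work is exactly the content of their Prop.~2.10, so you are not bypassing anything. The assembly into Thm.~3.1 is then the formal induction you describe.
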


We will need a variation of this theorem for Karoubi-Villameyor $K$-theory. Suppose that $A=k\oplus I_1\oplus\cdots\oplus I_m$ is as in the lemma, and suppose further that each ring $k+I_i$ is regular. We claim that \[KV_*(A)\cong KV_*(k)\oplus\bigoplus_{i=1}^m KV_*(k+I_i,I_i).\] By an obvious induction it is enough to treat the case $m=2$: Then
\begin{align*}
KV_*(A)
	&=KV_*(k+I_1)\oplus KV_*(A,I_2)\\
	&=KV_*(k+I_1)\oplus KV_*(k+I_2,I_2)\\
	&=KV_*(k)\oplus KV_*(k+I_1,I_1)\oplus KV_*(k+I_2,I_2),
\end{align*}
where the only non-trivial equality is the second, which follows from applying excision to $I_2\subseteq k+I_2\subseteq A$ (note that $A\to A/I_2$ is a $GL$-fibration since $k+I_1$ is regular). This completes the proof of the claim.
\end{appendix}

\bibliographystyle{acm}

\begin{thebibliography}{10}

\bibitem{ArtinMazur1969}
{\sc Artin, M., and Mazur, B.}
\newblock {\em Etale homotopy}, vol.~100 of {\em Lecture Notes in Mathematics}.
\newblock Springer-Verlag, Berlin, 1986.
\newblock Reprint of the 1969 original.

\bibitem{Beilinson1980}
{\sc Be{\u\i}linson, A.~A.}
\newblock {Residues and ad\`eles.}
\newblock {\em Funct. Anal. Appl. 14\/} (1980), 34--35.

\bibitem{Cathelineau1990}
{\sc Cathelineau, J.-L.}
\newblock {$\lambda$}-structures in algebraic {$K$}-theory and cyclic homology.
\newblock {\em $K$-Theory 4}, 6 (1990/91), 591--606.

\bibitem{Cortinas2006}
{\sc Corti{\~n}as, G.}
\newblock The obstruction to excision in {$K$}-theory and in cyclic homology.
\newblock {\em Invent. Math. 164}, 1 (2006), 143--173.

\bibitem{Davis1978}
{\sc Davis, E.~D.}
\newblock On the geometric interpretation of seminormality.
\newblock {\em Proc. Amer. Math. Soc. 68}, 1 (1978), 1--5.

\bibitem{Dennis1979}
{\sc Dennis, R.~K., and Krusemeyer, M.~I.}
\newblock {$K_{2}(A[X,\,Y]/XY)$}, a problem of {S}wan, and related
  computations.
\newblock {\em J. Pure Appl. Algebra 15}, 2 (1979), 125--148.

\bibitem{Dieudonne1967}
{\sc Dieudonn{\'e}, J.}
\newblock {\em Topics in local algebra}.
\newblock Edited and supplemented by Mario Borelli. Notre Dame Mathematical
  Lectures, No. 10. University of Notre Dame Press, Notre Dame, Ind., 1967.

\bibitem{Weibel1989}
{\sc Geller, S., Reid, L., and Weibel, C.}
\newblock The cyclic homology and {$K$}-theory of curves.
\newblock {\em J. Reine Angew. Math. 393\/} (1989), 39--90.

\bibitem{Geller1986}
{\sc Geller, S.~C.}
\newblock A note on injectivity of lower {$K$}-groups for integral domains.
\newblock In {\em Applications of algebraic {$K$}-theory to algebraic geometry
  and number theory, {P}art {I}, {II} ({B}oulder, {C}olo., 1983)}, vol.~55 of
  {\em Contemp. Math.} Amer. Math. Soc., Providence, RI, 1986, pp.~437--447.
\newblock With an appendix by R. Keith Dennis and Clayton C. Sherman.

\bibitem{Geller1980}
{\sc Geller, S.~C., and Weibel, C.~A.}
\newblock {$K_{2}$} measures excision for {$K_{1}$}.
\newblock {\em Proc. Amer. Math. Soc. 80}, 1 (1980), 1--9.

\bibitem{Goodwillie1985}
{\sc Goodwillie, T.~G.}
\newblock Cyclic homology, derivations, and the free loopspace.
\newblock {\em Topology 24}, 2 (1985), 187--215.

\bibitem{Goodwillie1986}
{\sc Goodwillie, T.~G.}
\newblock Relative algebraic {$K$}-theory and cyclic homology.
\newblock {\em Ann. of Math. (2) 124}, 2 (1986), 347--402.

\bibitem{Grayson1976}
{\sc Grayson, D.}
\newblock Higher algebraic {$K$}-theory. {II} (after {D}aniel {Q}uillen).
\newblock In {\em Algebraic {$K$}-theory ({P}roc. {C}onf., {N}orthwestern
  {U}niv., {E}vanston, {I}ll., 1976)}. Springer, Berlin, 1976, pp.~217--240.
  Lecture Notes in Math., Vol. 551.

\bibitem{Hesselholt1997}
{\sc Hesselholt, L., and Madsen, I.}
\newblock On the {$K$}-theory of finite algebras over {W}itt vectors of perfect
  fields.
\newblock {\em Topology 36}, 1 (1997), 29--101.

\bibitem{Isaksen2002}
{\sc Isaksen, D.~C.}
\newblock Calculating limits and colimits in pro-categories.
\newblock {\em Fund. Math. 175}, 2 (2002), 175--194.

\bibitem{Karoubi1974}
{\sc Karoubi, M.}
\newblock Localisation de formes quadratiques. {I}, {II}.
\newblock {\em Ann. Sci. \'Ecole Norm. Sup. (4) 7\/} (1974), 359--403 (1975);
  ibid. (4) 8 (1975), 99--155.

\bibitem{Kassel1987}
{\sc Kassel, C.}
\newblock Cyclic homology, comodules, and mixed complexes.
\newblock {\em J. Algebra 107}, 1 (1987), 195--216.

\bibitem{Kato1980}
{\sc Kato, K.}
\newblock A generalization of local class field theory by using {$K$}-groups.
  {II}.
\newblock {\em J. Fac. Sci. Univ. Tokyo Sect. IA Math. 27}, 3 (1980), 603--683.

\bibitem{Kato1983}
{\sc Kato, K.}
\newblock Residue homomorphisms in {M}ilnor {$K$}-theory.
\newblock In {\em Galois groups and their representations ({N}agoya, 1981)},
  vol.~2 of {\em Adv. Stud. Pure Math.} North-Holland, Amsterdam, 1983,
  pp.~153--172.

\bibitem{Krishna2005}
{\sc Krishna, A.}
\newblock On {$K\sb 2$} of one-dimensional local rings.
\newblock {\em $K$-Theory 35}, 1-2 (2005), 139--158.

\bibitem{Krishna2010}
{\sc Krishna, A.}
\newblock An {A}rtin-{R}ees theorem in {$K$}-theory and applications to zero
  cycles.
\newblock {\em J. Algebraic Geom. 19}, 3 (2010), 555--598.

\bibitem{Levine1987}
{\sc Levine, M.}
\newblock The indecomposable {$K_3$} of fields.
\newblock {\em Bull. Amer. Math. Soc. (N.S.) 17}, 2 (1987), 321--325.

\bibitem{Loday1992}
{\sc Loday, J.-L.}
\newblock {\em Cyclic homology}, vol.~301 of {\em Grundlehren der
  Mathematischen Wissenschaften [Fundamental Principles of Mathematical
  Sciences]}.
\newblock Springer-Verlag, Berlin, 1992.
\newblock Appendix E by Mar{\'{\i}}a O. Ronco.

\bibitem{Merkurjev1983}
{\sc Merkurjev, A.~S.}
\newblock On the torsion in {$K_{2}$} of local fields.
\newblock {\em Ann. of Math. (2) 118}, 2 (1983), 375--381.

\bibitem{Moore1968}
{\sc Moore, C.~C.}
\newblock Group extensions of {$p$}-adic and adelic linear groups.
\newblock {\em Inst. Hautes \'Etudes Sci. Publ. Math.}, 35 (1968), 157--222.

\bibitem{Suslin1989}
{\sc Nesterenko, Y.~P., and Suslin, A.~A.}
\newblock Homology of the general linear group over a local ring, and
  {M}ilnor's {$K$}-theory.
\newblock {\em Izv. Akad. Nauk SSSR Ser. Mat. 53}, 1 (1989), 121--146.

\bibitem{Parshin1976}
{\sc Parshin, A.~N.}
\newblock On the arithmetic of two-dimensional schemes. {I}. {D}istributions
  and residues.
\newblock {\em Izv. Akad. Nauk SSSR Ser. Mat. 40}, 4 (1976), 736--773, 949.

\bibitem{Popescu1985}
{\sc Popescu, D.}
\newblock General {N}\'eron desingularization.
\newblock {\em Nagoya Math. J. 100\/} (1985), 97--126.

\bibitem{Popescu1986}
{\sc Popescu, D.}
\newblock General {N}\'eron desingularization and approximation.
\newblock {\em Nagoya Math. J. 104\/} (1986), 85--115.

\bibitem{Roberts1976}
{\sc Roberts, L.~G.}
\newblock The {$K$}-theory of some reducible affine curves: a combinatorial
  approach.
\newblock In {\em Algebraic {$K$}-theory ({P}roc. {C}onf., {N}orthwestern
  {U}niv., {E}vanston, {I}ll., 1976)}. Springer, Berlin, 1976, pp.~44--59.
  Lecture Notes in Math., Vol. 551.

\bibitem{Dennis1973a}
{\sc Stein, M.~R., and Dennis, R.~K.}
\newblock {$K_{2}$} of radical ideals and semi-local rings revisited.
\newblock In {\em Algebraic {$K$}-theory, {II}: ``{C}lassical'' algebraic
  {$K$}-theory and connections with arithmetic ({P}roc. {C}onf., {B}attelle
  {M}emorial {I}nst., {S}eattle, {W}ash., 1972)}. Springer, Berlin, 1973,
  pp.~281--303. Lecture Notes in Math. Vol. 342.

\bibitem{Suslin1984a}
{\sc Suslin, A.~A.}
\newblock On the {$K$}-theory of local fields.
\newblock In {\em Proceedings of the {L}uminy conference on algebraic
  {$K$}-theory ({L}uminy, 1983)\/} (1984), vol.~34, pp.~301--318.

\bibitem{Suslin1986}
{\sc Suslin, A.~A., and Yufryakov, A.~V.}
\newblock The {$K$}-theory of local division algebras.
\newblock {\em Dokl. Akad. Nauk SSSR 288}, 4 (1986), 832--836.

\bibitem{Swan1998}
{\sc Swan, R.~G.}
\newblock N\'eron-{P}opescu desingularization.
\newblock In {\em Algebra and geometry ({T}aipei, 1995)}, vol.~2 of {\em Lect.
  Algebra Geom.} Int. Press, Cambridge, MA, 1998, pp.~135--192.

\bibitem{Thomason1990}
{\sc Thomason, R.~W., and Trobaugh, T.}
\newblock Higher algebraic {$K$}-theory of schemes and of derived categories.
\newblock In {\em The {G}rothendieck {F}estschrift, {V}ol.\ {III}}, vol.~88 of
  {\em Progr. Math.} Birkh\"auser Boston, Boston, MA, 1990, pp.~247--435.

\bibitem{Vorst1979}
{\sc Vorst, T.}
\newblock Polynomial extensions and excision for {$K_{1}$}.
\newblock {\em Math. Ann. 244}, 3 (1979), 193--204.

\bibitem{Wagoner1975}
{\sc Wagoner, J.~B.}
\newblock Homotopy theory for the {${\mathfrak p}$}-adic special linear group.
\newblock {\em Comment. Math. Helv. 50}, 4 (1975), 535--559.

\bibitem{Wagoner1976}
{\sc Wagoner, J.~B.}
\newblock Continuous cohomology and {$p$}-adic {$K$}-theory.
\newblock In {\em Algebraic {K}-theory ({P}roc. {C}onf., {N}orthwestern
  {U}niv., {E}vanston, {I}ll., 1976)}. Springer, Berlin, 1976, pp.~241--248.
  Lecture Notes in Math., Vol. 551.

\bibitem{Wagoner1976a}
{\sc Wagoner, J.~B.}
\newblock Delooping the continuous {$K$}-theory of a valuation ring.
\newblock {\em Pacific J. Math. 65}, 2 (1976), 533--538.

\bibitem{Weibel1980}
{\sc Weibel, C.~A.}
\newblock {$K$}-theory and analytic isomorphisms.
\newblock {\em Invent. Math. 61}, 2 (1980), 177--197.

\bibitem{Weibel1982}
{\sc Weibel, C.~A.}
\newblock Mayer-{V}ietoris sequences and mod {$p$} {$K$}-theory.
\newblock In {\em Algebraic {$K$}-theory, {P}art {I} ({O}berwolfach, 1980)},
  vol.~966 of {\em Lecture Notes in Math.} Springer, Berlin, 1982,
  pp.~390--407.

\bibitem{Weibel1986}
{\sc Weibel, C.~A.}
\newblock {$K$}-theory of {$1$}-dimensional schemes.
\newblock In {\em Applications of algebraic {$K$}-theory to algebraic geometry
  and number theory, {P}art {I}, {II} ({B}oulder, {C}olo., 1983)}, vol.~55 of
  {\em Contemp. Math.} Amer. Math. Soc., Providence, RI, 1986, pp.~811--818.

\end{thebibliography}

\vspace{1cm}\noindent Matthew Morrow,\\
University of Chicago,\\
5734 S. University Ave.,\\
Chicago,\\
IL, 60637,\\
USA\\
{\tt mmorrow@math.uchicago.edu}\\
\url{http://math.uchicago.edu/~mmorrow/}\\
\end{document}